\newtheorem{theorem}{Theorem}
\newtheorem{lemma}[theorem]{Lemma} 
\newtheorem{corollary}[theorem]{Corollary}
\newtheorem{assumption}[theorem]{Assumption}
\theoremstyle{definition} 
\theoremstyle{remark} 
\newtheorem{example}{Example}
\newtheorem*{claim*}{Claim}
\numberwithin{equation}{section}
\numberwithin{theorem}{section}
\numberwithin{example}{section}
\numberwithin{definition}{section}
\numberwithin{remark}{section}
\numberwithin{figure}{section}
\DeclareMathOperator{\diag}{diag}
\newcommand{\assumpref}[1]{Assumption~\ref{assump:#1}}
\newcommand{\secref}[1]{Section~\ref{sec:#1}}
\newcommand{\exref}[1]{Example~\ref{ex:#1}}
\newcommand{\exssref}[1]{\ref{ex:#1}}
\newcommand{\lemref}[1]{Lemma~\ref{lem:#1}}
\newcommand{\lemsref}[1]{Lemmas~\ref{lem:#1}}
\newcommand{\lemssref}[1]{\ref{lem:#1}}
\newcommand{\thmref}[1]{Theorem~\ref{thm:#1}}
\newcommand{\thmsref}[1]{Theorems~\ref{thm:#1}}
\newcommand{\thmssref}[1]{\ref{thm:#1}}
\newcommand{\tabref}[1]{Table~\ref{tab:#1}}
\title[]{Testing independence in high dimensions with sums of rank correlations}
\author[D.~Leung]{Dennis Leung} 
\address{Department of Statistics, University of Washington, Seattle,
  WA, U.S.A.}
\email{dmhleung@uw.edu}
\author[M.~Drton]{Mathias Drton} 
\address{Department of Statistics, University of Washington, Seattle,
  WA, U.S.A.}
\email{md5@uw.edu}
\begin{document}

\begin{abstract}
  We treat the problem of testing independence between $m$ continuous
  variables when $m$ can be larger than the available sample size
  $n$. We consider three types of test statistics that are constructed
  as sums or sums of squares of pairwise rank correlations. 
%
  In the asymptotic regime where  both $m$ and $n$ tend to infinity, a martingale central limit theorem
  is applied to show that the null distributions of these statistics
  converge to Gaussian limits, which are valid with no specific distributional or moment assumptions on the data.  Using the framework of
  U-statistics, our result covers a variety of rank correlations
  including Kendall's tau and a dominating term of Spearman's rank
  correlation coefficient (rho), but also degenerate U-statistics such
  as Hoeffding's
   $D$, or the $\tau^*$ of \citet{bergsma:2014}.
   As in the classical theory for U-statistics, the test statistics need to be scaled differently when the rank correlations used to construct them are degenerate U-statistics.    
%
  The power of the
  considered tests is explored in rate-optimality theory under a
  Gaussian equicorrelation alternative as well as in numerical
  experiments for specific cases of more general alternatives.
\end{abstract}

\keywords{central limit theorem, high-dimensional statistics,
  independence, martingales, rank statistics, U-statistics}

\subjclass[2000]{62H05}

\maketitle

\section{Introduction} \label{sec:intro}

This paper is concerned with nonparametric tests of independence
between the coordinates of a continuous random vector
${\bf X}=(X^{(1)}, \dots, X^{(m)})$.  Let ${\bf X}_1,\dots,{\bf X}_n$ be an
i.i.d.~sample, with each ${\bf X}_i=(X_i^{(1)},\dots,X_i^{(m)})$
following the same distribution as ${\bf X}$.  We then wish to test the null
hypothesis
\begin{equation}\label{eq:H0}
H_0:\; X^{(1)},  \dots,  X^{(m)}
\text{ are independent}.
\end{equation}
The natural approach is to form a test statistic that measures the
dependence among the variables $X^{(1)},  \dots,  X^{(m)}$ based on the sample, and reject $H_0$ when its value is too large, where the critical value of rejection is calibrated by the asymptotic distribution of the test statistic under the null.  Our focus is on the use of rank correlations in problems where the dimension $m$ can be larger than the sample size $n$.  Specifically, our testing procedures will be studied under the asymptotic regime where $m = m(n)$ grows as a function
of $n$ such that $m$ also tends to infinity. This regime is denoted by $m, n \longrightarrow \infty$ throughout our paper. 


There is a vast literature on the problem of testing independence.  If
$\bf X$ is normal, then under the traditional asymptotic setup in
which $n$ goes to $\infty$ while $m$ is fixed, the likelihood ratio
test (LRT) statistic converges to a chi-square
distribution when $H_0$ is true
\citep{andersonTedMulti}.  This test is known to be unimplementable for
$m > n$ due to the singularity of the sample covariance matrix, but
recent work of \citet[Corollary 1]{JiangQi} shows asymptotic
normality for the LRT statistic under the regime where
$m, n \longrightarrow \infty$ while $n > m + 4$. When $m$ can
actually be larger than $n$, one line of work uses the \emph{maximum}
of many pairwise dependency measures to test for~(\ref{eq:H0}). For
$p=1,\dots, m$, let $\mathbf{X}^{(p)}=(X^{(p)}_1,\dots,X^{(p)}_n)$ be
the sample of observations for the $p$-th variable.  For
$1\le p\not= q\le m$, let $r^{(pq)}$ denote the sample Pearson
(product-moment) correlation of $\mathbf{X}^{(p)}$ and
$\mathbf{X}^{(q)}$.  \citet{jiang2004} proved that, under suitable
centering and scaling, the null distribution of the statistic
\begin{equation}\label{max_stat}
\max_{1\leq p < q \leq m}  \big(r^{(pq)}\big)^2
\end{equation}
converges to an extreme value distribution of type $1$ when $ m/n$
converges to a constant $\gamma \in (0, \infty)$ as
$m, n \longrightarrow \infty$.  We will abbreviate such convergence as
$m/n\longrightarrow \gamma \in (0, \infty)$.  He assumed
higher-order moment conditions that were weakened in subsequent work
\citep{zhou2007, liu2008, li2010, li2010followup}.  \citet{cai2011}
derived a similar asymptotic distribution for the statistic
from~(\ref{max_stat}), allowing for subexponential growth in the
dimension $m$.  Further weakening distributional assumptions, the
recent work of \cite{HanLiu2014} treated maxima of rank correlations,
that is, the sample Pearson correlation in \eqref{max_stat} is
replaced by a rank correlation measure such as Kendall's tau.  This
maximum was shown to have a similar extreme value type null
distribution.  Statistics such as~(\ref{max_stat}) are of obvious
appeal when strong dependence is expected between some variables.

This paper, however, aligns with a different approach that is
appealing when moderate dependence is expected between many variables.
In this approach, tests are based on estimates of the \emph{sum} of
many pairwise dependency signals.  Let $\Sigma = (\sigma^{(pq)})$ and
$R = (\rho^{(pq)})$ be, respectively, the population covariance and
Pearson correlation matrix of the random vector $\bf X$.  Under a Gaussian
assumption for ${\bf X}$, \cite{Schott2005} proposed the use of the
``plug-in" estimate
\begin{equation} \label{SchottStat}
 S_{r} := \sum_{1 \leq p < q \leq m} \big(r^{(pq)} \big)^2
 \end{equation}
 for the overall dependency signal $\sum_{p < q} (\rho^{(pq)})^2$.
 Subsequent work of \cite{ChenShao} obtained a Berry-Esseen bound for
 this statistic's weak convergence to normality under $H_0$ as
 $m, n \longrightarrow \infty$.  The statistic $S_r$ is in fact Rao's
 score statistic for the multivariate normal setting; see
 Appendix~\ref{sec:Rao}. \cite{mao2014} suggested a related statistic,
 namely, the sum of $f(r^{(pq)})$ for $f(x)=x^2/(1-x^2)$, and again
 the null distribution is shown to be asymptotically normal.  For the
 two related problems of testing the equality and the proportionality
 of $\Sigma$ to the identity matrix, similar statistics have been
 studied \citep{ledoit:2002, nagao:1973, john1972}.
Motivated by this approach, we construct our first class of test statistics by plugging in rank correlations to obtain nonparametric tests for~(\ref{eq:H0}).  We illustrate it here for Kendall's tau.
For $1\le p\not=q\le m$, let
\begin{equation}\label{tau_X_form}
\tau^{(pq)} = {n \choose 2}^{-1} \sum_{1 \leq i < j \leq n}
\text{sgn}\left( X^{(p)}_i -X^{(p)}_j\right) \text{sgn}
\left(X^{(q)}_i -X^{(q)}_j\right)
\end{equation}
be the sample Kendall's tau correlation coefficient for
${\bf X}^{(p)}$ and ${\bf X}^{(q)}$. A natural test is then to reject
$H_0$ for large values of the statistic 
\begin{equation}
  \label{eq:kendall-sample-sum}
 S_\tau := \sum_{1 \leq p < q \leq m}\big(\tau^{(pq)}\big)^2.
\end{equation}

As an estimator of the dependency signal
\begin{equation}
  \label{eq:kendall-signal-strength}
\sum_{1 \leq p < q \leq m} \left(\mathbb{E}\left[\tau^{(pq)}
\right]\right)^2,
\end{equation}
the ``plug-in" statistic $S_\tau$ from~(\ref{eq:kendall-sample-sum})
is biased and thus needs to be recentered to obtain a mean zero
asymptotic null distribution under our considered regime
$m, n \longrightarrow \infty$.
Alternatively, we may instead attempt to form an unbiased estimator
of~(\ref{eq:kendall-signal-strength}) to serve as a test statistic.  As shown in \secref{main},
such an unbiased estimator is given by
\begin{multline} \label{kendall_W_kern_stat}
  T_\tau := \frac{1}{4!\binom{n}{4}} \sum
  \text{sgn}\left(X^{(p)}_{i_{\pi(1)}} - X^{(p)}_{i_{\pi(2)}}\right)
  \text{sgn}\left(X^{(p)}_{i_{\pi(3)}} - X^{(p)}_{i_{\pi(4)}}\right)\\
  \times
  \text{sgn}\left(X^{(q)}_{i_{\pi(1)}} - X^{(q)}_{i_{\pi(2)}}\right)
  \text{sgn}\left(X^{(q)}_{i_{\pi(3)}} - X^{(q)}_{i_{\pi(4)}}\right),
 \end{multline}
 where the summation is over all variable pairs $1 \leq p < q \leq m$,
  ordered 4-tuples of indices $1 \leq i_1 < i_2 < i_3 < i_4 \leq n$,
 and permutations $\pi$ on four elements.
 This type of statistics is motivated by the work of \cite{Chen2010}
and \cite{CaiAndMa}, who tested the equality of $\Sigma$ to the identity based on unbiased estimates
of the squared Frobenius norm $\|\Sigma - I_m\|_F^2$, where $I_m$ is the $m$-by-$m$ identity matrix. Under a Gaussian assumption for ${\bf X}$,  \cite{CaiAndMa} showed their test to be asymptotically minimax rate optimal. 

As a last variant, when testing for \emph{positive} associations, it may be of interest to consider the statistic
\begin{equation} \label{sumoftaus}
 Z_\tau := \sum_{1 \leq p < q \leq m} \tau^{(pq)},
\end{equation}
which sums all pairwise sample correlations for a
``one-sided" test.  As we explain below, such a statistic also
provides a ``two-sided" test for $H_0$ when rank correlations such as the
$\tau^*$ of \citet{bergsma:2014} are used. In
Section~\ref{sec:asympt-null-distr}, we show that all the
statistics introduced above are asymptotically normal under suitable
recentering and rescaling.

 Kendall's tau is an example of a U-statistic whose values depend
 on the data  only via ranks \citep[Example 12.5]{VWbook}.  Indeed, the
 values of \eqref{tau_X_form}, \eqref{kendall_W_kern_stat} and \eqref{sumoftaus} remain
 unchanged if each observation $X^{(p)}_i$ is replaced with its rank
 $R^{(p)}_i$.  To be specific, 
 $R^{(p)}_i$ is the rank of $X^{(p)}_i$ among
 $X^{(p)}_1, \dots, X^{(p)}_n$.
 Other examples of measures of association that are both U-statistics
 and rank correlations are the $D$ of \citet{hoeffdingIndep} and the
 aforementioned 
 $\tau^*$ of \citet{bergsma:2014}.  We note that for a pair of continuous
 random variables both of these statistics lead to consistent tests of
 independence, that is, their expectations are zero if and only if the
 two random variables are independent.  Another classical example is
 Spearman's rho, which is not a U-statistic but can be approximated by
 a rank-based U-statistic.

 The above examples of U-statistics are reviewed in
 Section~\ref{sec:rank-cor}, which also introduces a general
 framework of rank-based U-statistics that we adopt for a unified
 theory.  In \secref{main} we construct our classes of test
 statistics for the null hypothesis $H_0$ from~(\ref{eq:H0}).  Their
 asymptotic null distributions when
 $m, n \longrightarrow \infty$  are derived in
 Section~\ref{sec:asympt-null-distr}.  Our arguments make use of a
 central limit theorem for martingale arrays and U-statistic theory.
 We emphasize that all our statistics admit a normal limit after appropriate rescaling, 
but just as in the classical theory for U-statistics,  the scaling factors have a different order when degenerate U-statistics are considered.
 In \secref{power}, we explore aspects of power of our tests from a minimax point of view.
 Simulation experiments are presented in \secref{simulations},
 which also discusses computational considerations in the
 implementation of the tests.  Throughout, for our null distributional theory, we make no
 distributional or moment assumption on $(X^{(1)}, \dots, X^{(m)})$ other than
 that it is a continuous random vector.  This assumption is needed to
 avoid ties in observations and ranks.  We conclude with a brief
 discussion in Section~\ref{sec:conclude}.

\subsection{Notational convention}
For $p\in\{1,\dots,m\}$, we let
$\mathbf{R}^{(p)} := (R_1^{(p)},\dots, R_n^{(p)})$ be the vector of
ranks of $\mathbf{X}^{(p)}= (X_1^{(p)},\dots, X_n^{(p)})$.  The
symmetric group of order $l$ is denoted by $\mathfrak{S}_l$.
Depending on the context, its elements are treated either as
permutation functions or as ordered tuples from the set $\{1, \dots, l\}$.
For $k \leq n$, $\mathcal{P}(n,k)$ denotes the set of $k$-tuples
$\mathbf{i} = (i_1, \dots, i_k)$ with
$1 \leq i_1 < \dots < i_k \leq n$, and we will also identify the tuple
$\mathbf{i}$ with its set of elements $\{i_1, \dots, i_k\}$.  Hence,
for any two elements ${\bf i}\in \mathcal{P}(n, k_1)$ and ${\bf j} \in \mathcal{P}(n, k_2)$ with $1 \leq k_1, k_2 \leq n$, the
operations ${\bf i}\cup {\bf j}$, ${\bf i}\cap {\bf j}$, and
${\bf i}\setminus {\bf j}$ give the tuples with increasing components
that, as sets, equal the union, intersection and difference of
${\bf i}$ and ${\bf j}$, respectively.  For
$\mathbf{i}\in\mathcal{P}(n,k)$, we let
${\bf X}^{(p)}_{\bf i}:= (X^{(p)}_{i_1}, \dots, X^{(p)}_{i_k})$, and
define the rank vector
\[
\mathbf{R}_{\mathbf{i}}^{(p)} := \left(R^{(p)}_{\mathbf{i}, 1}, \dots,
  R^{(p)}_{\mathbf{i}, k} \right),
\]
where $R^{(p)}_{\mathbf{i}, c}$ is the rank of $X^{(p)}_{i_{c}}$ among
$X^{(p)}_{i_1}, \dots, X^{(p)}_{i_k}$.  

Let $p\not=q$ index two distinct variables.  Then ${\bf X}^{(pq)}_c$
and ${\bf R}^{(pq)}_c$ denotes the pairs $(X^{(p)}_c, X^{(q)}_c)$ and
$(R^{(p)}_c, R^{(q)}_c)$, respectively, for $c = 1, \dots, n$.
Similarly, given
$\mathbf{i} = (i_1, \dots, i_k)\in \mathcal{P}(n, k)$, we let
$\mathbf{X}^{(pq)}_{\mathbf{i}, c} := (X^{(p)}_{i_c}, X^{(q)}_{i_c})$
and
$\mathbf{R}^{(pq)}_{\mathbf{i}, c} := (R^{(p)}_{{\mathbf i}, c},
R^{(q)}_{\mathbf{i}, c})$
for $c \in \{1, \dots, k\}$.  We then define the $k$-tuples that are observation
and rank vectors of pairs:
\[
 {\bf R}^{(pq)}_{\bf i}  := \left(\mathbf{R}^{(pq)}_{{\mathbf i}, 1}, \dots, \mathbf{R}^{(pq)}_{{\bf i}, k} \right) \text{ and }
 {\bf X}^{(pq)}_{\bf i}  := \left(\mathbf{X}^{(pq)}_{{\bf i}, 1}, \dots, \mathbf{X}^{(pq)}_{{\bf i}, k} \right).
\]

When taking expectations under the null hypothesis $H_0$, we write
$\mathbb{E}_0[\cdot]$, whereas $\mathbb{E}[\cdot]$ is the general
expectation operator, possibly under alternative hypotheses.
Similarly, we write $P_0[\cdot]$, $P[\cdot]$, $\text{Var}_0[\cdot]$,
$\text{Var}[\cdot]$, $\text{Cov}_0[\cdot]$ and $\text{Cov}[\cdot]$ for
the probability, variance and covariance operator under $H_0$ and
possibly alternatives respectively.  Finally, $\| \cdot \|_\infty$ and
$\|\cdot\|_2$ are the max norm and Euclidean norm for vectors,
respectively, and the Froebenius norm of a matrix is denoted by
$\|\cdot\|_F$. For two sequences $(a_n)$ and $(b_n)$, the symbol
$a_n \asymp b_n$ is used to indicate the existence of 
constants $c, C > 0$ such that $ c |a_n|\leq |b_n|\leq C|a_n|$ for all
indices $n$.

\section{Rank correlations as U-statistics} \label{sec:rank-cor}

This section lays out a rank-based U-statistic framework that
encompasses all rank correlations we will use when constructing specific
test statistics for $H_0$ in \secref{main}. Let
\[
h : \left(\mathbb{R}^2\right)^k \longrightarrow \mathbb{R}
\]
be a symmetric function of $k\ge 2$ arguments in $\mathbb{R}^2$, i.e., for
all choices of
$\mathbf{x}_i = (x^{(1)}_i, x^{(2)}_i)\in \mathbb{R}^2$,
$i = 1, \dots, k$, and any permutation $\pi\in\mathfrak{S}_k$, it
holds that
$ h\left(\mathbf{x}_1, \dots, \mathbf{x}_k \right)=
h\left(\mathbf{x}_{\pi(1)}, \dots, \mathbf{x}_{\pi(k)} \right)$.
For any pair of distinct variable indices $p,q\in\{1,\dots,m\}$, the function
$h$ yields a \emph{U-statistic}
\begin{equation}
   \label{U_stat1}
U^{(pq)}_h =  \frac{1}{{n \choose k}} \sum_{\mathbf{i}\in\mathcal{P}(n,k) } h\left(\mathbf{X}^{(pq)}_{\mathbf{i}, 1}, \dots, \mathbf{X}^{(pq)}_{\mathbf{i}, k}\right) = \frac{1}{{n \choose k}} \sum_{\mathbf{i}\in\mathcal{P}(n,k) } h\left(\mathbf{X}^{(pq)}_{\mathbf{i}}\right).
\end{equation}
In this context, $h$ is termed the \emph{kernel} of the U-statistics
and is said to be of \emph{degree} $k$.

Subsequently, we always assume that the kernel $h$ and the
induced U-statistics from~(\ref{U_stat1}) are \emph{rank-based}, that
is, the kernel has the property that
$h(\mathbf{x}_1, \dots, \mathbf{x}_k ) = h(\mathbf{r}_1, \dots,
\mathbf{r}_k ) $
for all arguments
$\mathbf{x}_1, \dots, \mathbf{x}_k \in \mathbb{R}^2$.  Here, for each
argument ${\bf x}_i= (x^{(1)}_i, x^{(2)}_i)\in \mathbb{R}^2$, we let
${\bf r}_i = (r^{(1)}_i, r^{(2)}_i)$ with $r^{(j)}_i$ being the rank
of $x^{(j)}_i$ among $x^{(j)}_1, \dots, x^{(j)}_k$ for $j = 1, 2$.  If
$U^{(pq)}_h $ from~(\ref{U_stat1}) is rank-based, then
\begin{equation} 
\label{U_stat}
\displaystyle U^{(pq)}_h 
=  \frac{1}{{n \choose k}} \sum_{\mathbf{i}\in\mathcal{P}(n,k) } h\left(\mathbf{R}^{(pq)}_{\mathbf{i}, 1}, \dots, \mathbf{R}^{(pq)}_{\mathbf{i}, k}\right) = \frac{1}{{n \choose k}} \sum_{\mathbf{i}\in\mathcal{P}(n,k) } h\left(\mathbf{R}^{(pq)}_{\mathbf{i}}\right).
\end{equation}
Note that $({\bf R}^{(pq)}_1, \dots,{\bf R}^{(pq)}_n)$ uniquely
determines all $k$-tuples
$(\mathbf{R}^{(pq)}_{\mathbf{i}, 1}, \dots,
\mathbf{R}^{(pq)}_{\mathbf{i}, k})$.

The following lemma lists elementary properties of $U^{(pq)}_h$ under
$H_0$.  It relies on the fact that under $H_0$ the distribution of
$h(\mathbf{R}^{(pq)}_{\mathbf{i}})$ does not depend on the choice of
$\mathbf{i}$, $p$ and $q$ because the rank vectors
$\mathbf{R}^{(1)},\dots,\mathbf{R}^{(m)}$ are i.i.d.~according to a
uniform distribution on the symmetric group $\mathfrak{S}_n$; recall
that we assume the original observations to be continuous random
vectors such that ties among the ranks have probability zero.  A proof
of the lemma is given in Appendix~\ref{sec:proof-rank-cor}.

\begin{lemma}
  \label{lem:general_properties}
 Suppose $g(\cdot)$ is a real-valued function defined on ${(\mathbb{R}^2)}^n$, and for $1 \leq p \not= q \leq m$,
\[
g^{(pq)} := g\left({\bf R}^{(pq)}_1, \dots,{\bf R}^{(pq)}_n\right)\qquad
\]
is symmetric in the $n$ arguments ${\bf R}^{(pq)}_1, \dots, {\bf R}^{(pq)}_n$. The random variables $g^{(pq)}$ satisfy the following
  properties under $H_0$: 
  \begin{enumerate}
  \item If $p\not=q$, then $g^{(pq)}$ has the same distribution as
    $g^{(12)}$.
  \item If $p\not=q$, then $g^{(pq)}$ is independent of
    $\mathbf{X}^{(p)}$ (and also independent of
    $\mathbf{X}^{(q)}$).
  \item For any fixed $1\le l\le m$, the $m-1$ random variables
    $g^{(pl)}$, $p\not=l$, are mutually independent.
  \item If $p\not=q$, $r\not=s$ and $\{p, q\} \not = \{r, s\}$, then 
    $g^{(pq)}$ and $g^{(rs)}$ are independent.
  \end{enumerate}
\end{lemma}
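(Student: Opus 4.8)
The plan is to write $g^{(pq)}=G\bigl(\mathbf{R}^{(p)},\mathbf{R}^{(q)}\bigr)$ for a fixed measurable map $G:\mathfrak{S}_n\times\mathfrak{S}_n\to\mathbb{R}$ obtained from $g$, and to reduce all four statements to two facts about the rank vectors. First, under $H_0$ the columns $\mathbf{X}^{(1)},\dots,\mathbf{X}^{(m)}$ are independent, and within each column the $n$ continuous observations are i.i.d.; hence each $\mathbf{R}^{(p)}$ is a uniformly random element of $\mathfrak{S}_n$ and $\mathbf{R}^{(1)},\dots,\mathbf{R}^{(m)}$ are mutually independent. Second, and crucially, the symmetry of $g$ in its $n$ pair-arguments means that $G(\sigma,\rho)$ is unchanged when one permutes the entries of $\sigma$ and of $\rho$ by a common permutation. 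Taking that common permutation to be the one sorting $\sigma$ into the identity tuple $(1,\dots,n)$ gives $G(\sigma,\rho)=G\bigl((1,\dots,n),\rho'\bigr)$, where $\rho'$ is $\rho$ reindexed through a bijection determined by $\sigma$. Consequently, if $U$ is uniform on $\mathfrak{S}_n$, then for every fixed $\sigma$ the law of $G(\sigma,U)$ equals that of $G\bigl((1,\dots,n),U\bigr)$ and in particular does not depend on $\sigma$; the same holds for $G(U,\rho)$ with $\rho$ fixed. I will call this the standardization step.

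Property (i) is then immediate: $(\mathbf{R}^{(p)},\mathbf{R}^{(q)})$ and $(\mathbf{R}^{(1)},\mathbf{R}^{(2)})$ are both pairs of independent uniform permutations, so applying $G$ yields equal laws. For property (ii), I would condition on $\mathbf{X}^{(p)}$. Since $\mathbf{R}^{(p)}$ is a function of $\mathbf{X}^{(p)}$ while $\mathbf{R}^{(q)}$ is uniform and independent of $\mathbf{X}^{(p)}$ (as $\mathbf{X}^{(q)}$ is independent of $\mathbf{X}^{(p)}$ under $H_0$), the conditional law of $g^{(pq)}$ given $\mathbf{X}^{(p)}=\mathbf{x}$ is the law of $G(\sigma,U)$ with $\sigma=\mathbf{R}^{(p)}$ fixed by $\mathbf{x}$ and $U$ uniform. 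By the standardization step this law is the same for every $\mathbf{x}$, which is exactly independence of $g^{(pq)}$ from $\mathbf{X}^{(p)}$; interchanging the roles of the two coordinates (using $G(U,\rho)$) gives independence from $\mathbf{X}^{(q)}$.

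For properties (iii) and (iv) I would condition on the shared rank vector and upgrade conditional independence to unconditional independence via the standardization step. In (iii), fix $l$ and condition on $\mathbf{R}^{(l)}=\tau$: the vectors $\mathbf{R}^{(p)}$, $p\neq l$, are then i.i.d.\ uniform and independent of $\mathbf{R}^{(l)}$, so the variables $g^{(pl)}=G(\mathbf{R}^{(p)},\tau)$ are conditionally independent, and each has conditional law $G(U,\tau)\stackrel{d}{=}G(U,(1,\dots,n))$, which is free of $\tau$. Hence the conditional joint law factorizes into $\tau$-free marginals, and integrating out $\mathbf{R}^{(l)}$ yields unconditional mutual independence. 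Property (iv) splits into two cases: when $\{p,q\}\cap\{r,s\}=\emptyset$ the two statistics are functions of disjoint, independent rank vectors and are therefore independent, while when they share a single index $a$ the argument of (iii) applies after conditioning on $\mathbf{R}^{(a)}$ (the standardization step works regardless of whether $a$ occupies the first or second coordinate of the shared pairs). The main obstacle is the standardization step and its use here: each $g^{(pl)}$ genuinely depends on the common vector $\mathbf{R}^{(l)}$, so independence is not obvious and is rescued only because the symmetry of $g$ forces the conditional law given $\mathbf{R}^{(l)}$ not to depend on $\mathbf{R}^{(l)}$; care is also needed to phrase the relabeling cleanly against the tuple-versus-function conventions for $\mathfrak{S}_n$.
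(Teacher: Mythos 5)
Your proof is correct and follows essentially the same route as the paper: your ``standardization step'' $G(\sigma,\rho)=G\bigl((1,\dots,n),\rho'\bigr)$ is precisely the paper's observation that, by the permutation symmetry of $g$, the statistic $g^{(pq)}$ is a function of the antirank vector of $\mathbf{X}^{(q)}$ relative to $\mathbf{X}^{(p)}$, which is uniform on $\mathfrak{S}_n$ given $\mathbf{X}^{(p)}$. The conditioning-and-integrating arguments you use for (ii)--(iv) are the same mechanism the paper invokes when it asserts that these antirank vectors are uniform and mutually independent.
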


In this paper we assume all kernel functions $h$ to be
\emph{bounded}.  Since $h$ can be recentered if needed, without
loss of generality, we will further assume that
$\mathbb{E}_0[h(\mathbf{R}^{(pq)}_{\mathbf{i}})] = 0$, a property
exhibited by all the examples below.

\begin{example}[Kendall's tau] \label{ex:ken_ex} If we take $h$ in \eqref{U_stat} to
  be the kernel of degree $k = 2$ given by
  \begin{equation*}
    h_\tau(\mathbf{r}_1, \mathbf{r}_2)  = \text{sgn}  \left( \left(r_1^{(1)} -  r_2^{(1)}\right)\left(r_1^{(2)} - r_2^{(2)} \right)\right),
  \end{equation*}
  then $ \tau^{(pq)} := U^{(pq)}_{h_\tau}$ is Kendall's tau, which
  measures the association of ${\bf X}^{(p)}$ and ${\bf X}^{(q)}$ by
  counting concordant versus disconcordant pairs of points.
\end{example}

\begin{example}[Spearman's rho]\label{ex:rho_ex}
Let  
\begin{equation} \label{rho}
\rho^{(pq)}_s = 1 - \frac{6}{n (n^2 - 1)} \sum_{i = 1}^n \left(R_i^{(p)} - R_i^{(q)}\right)^2.
\end{equation}
be the Spearman's rank correlation coefficient (rho) between ${\bf X}^{(p)}$
and ${\bf X}^{(q)}$. Define
$\hat{\rho}^{(pq)}_s := U_{h_{\hat{\rho}_s}}$, where
$h_{\hat{\rho}_s}$ is the kernel function of degree $3$ given by
  \begin{equation} \label{kern_rho_hat}  h_{\hat{\rho}_s} \left(\mathbf{r}_1,
      \mathbf{r}_2, \mathbf{r}_3 \right)  =  \frac{1}{2} \sum_{\pi \in \mathfrak{S}_3} \text{sgn}\left( r^{(1)}_{\pi_1} -
      r^{(1)}_{\pi_2}  \right) \text{sgn}\left( r^{(2)}_{\pi_1}  -
      r^{(2)}_{\pi_3} \right).
  \end{equation} 
\citet[p.318]{hoeffdingU} showed that
  \begin{align} 
    \rho^{(pq)}_s 
    &= 
 \frac{n - 2}{n+1} \hat{\rho}^{(pq)}_s + \frac{3}{n + 1} \tau^{(pq)}  \label{rho_decomp}.
\end{align}
Hence, the dominating term $\hat{\rho}_s$ of Spearman's rho is a U-statistic.
\end{example} 

\begin{example}[Hoeffding's $D$ statistic]  \label{ex:D_ex}
Let
\[
h_D({\bf r}_1, \cdots, {\bf r}_5) = \frac{1}{5!} \sum_{\pi \in \mathfrak{S}_5} 
\frac{\phi \left(r^{(1)}_{\pi_1}, \dots, r^{(1)}_{\pi_5}\right) \phi\left(r^{(2)}_{\pi_1}, \dots, r^{(2)}_{\pi_5}\right) }{4},
\]
where 
\[
\phi \left(r_1, \dots, r_5\right) =\left(I \left(r_1 \geq r_2\right) - I\left(r_1 \geq  r_3\right) \right)\left(I(r_1 \geq r_4) - I(r_1 \geq  r_5)\right)
\]
and $I(\cdot)$ is the indicator function.  \citet{hoeffdingIndep}
suggested the statistic $D^{(pq)} := U^{(pq)}_{h_D}$ to measure
association between the vectors ${\bf X}^{(p)}$ and ${\bf X}^{(q)}$.
When the joint distribution of $(X^{(p)},X^{(q)})$ has continuous
joint and marginal densities, the expectation
\[ \mathbb{E}\left[h_D({\bf R}_{{\bf i}, 1}^{(pq)}, \cdots, {\bf
    R}_{{\bf i}, 5}^{(pq)}) \right]
\]
is zero if and only if $X^{(p)}$ and $X^{(q)}$ are independent \citep[Theorem 3.1]{hoeffdingIndep}. 

\end{example}

\begin{example}[Bergsma and Dassios' $t^*$] \label{ex:t^*_ex} In a
  recent paper, \citet{bergsma:2014} introduced
  ${t^*}^{(pq)}:= U^{(pq)}_{h_{t^*}}$, a U-statistic of degree $4$
  with the kernel
\[
h_{t^*}({\bf r}_1, \cdots, {\bf r}_4) = \frac{1}{4!} \sum_{\pi \in \mathfrak{S}_4} \phi\left(r^{(1)}_{\pi_1}, \dots, r^{(1)}_{\pi_4}\right)
\phi\left(r^{(2)}_{\pi_1}, \dots, r^{(2)}_{\pi_4}\right),
\] 
where now
 \[
\phi(r_1, \dots, r_4) = I(r_1, r_3 < r_2, r_4) + I(r_1, r_3 > r_2, r_4) - 
I(r_1, r_2 < r_3, r_4) - I(r_1, r_2 > r_3, r_4).
 \]
 According to Theorem 1 in \cite{bergsma:2014}, $t^*$ is an
 improvement over Hoeffding's $D$ in the sense that the vanishing of
 $\mathbb{E}[h_{t^*}({\bf R}_{{\bf i}, 1}^{(pq)}, \cdots, {\bf
   R}_{{\bf i}, 4}^{(pq)}) ]$
 characterizes the independence of $X^{(p)}$ and $X^{(q)}$ under the
 weaker assumption that $(X^{(p)},X^{(q)})$ has a bivariate
 distribution that is discrete or (absolutely) continuous, or a
 mixture of both. In fact, in their paper \cite{bergsma:2014} conjectured that even this assumption is not necessary. 
\end{example}

Returning to our general setup, the variance and also the large-sample
behavior of the statistic $U^{(pq)}_h$ is determined by the
covariance quantities
\begin{equation} \label{cov_quan_0}
\zeta_c^h  \;:=\; \mathrm{Cov}\left[h\left(\mathbf{R}^{(pq)}_{\mathbf{i}} \right) 
  h\left(\mathbf{R}^{(pq)}_{\mathbf{j}} \right) \right], \quad c=0,\dots,k,
\end{equation}
where ${\bf i}, {\bf j} \in \mathcal{P}(n,k)$ are such that
$|{\bf i}\cap{\bf j}| = c$.  When $H_0$ is true,
\begin{equation} \label{cov_quan_00}
\zeta_c^h 
\; =\; 
\mathbb{E}_0\left[h\left(\mathbf{R}^{(pq)}_{\mathbf{i}} \right)
  h\left(\mathbf{R}^{(pq)}_{\mathbf{j}} \right) \right] \text{ } 
\end{equation}
as we are assuming that
$\mathbb{E}_0[h(\mathbf{R}^{(pq)}_{\mathbf{i}})] =
0$.  Furthermore, the value of
$\zeta_c^h$ does not depend on the choice of $({\bf i}, p,
q)$ under
$H_0$.  In the sequel, it will be clear from the context whether
$\zeta^h_c$ is defined under
$H_0$ or an alternative hypothesis. 

It is well known that
$0 = \zeta_0^h \leq \zeta_1^h , \dots, \leq \zeta_k^h$, and the kernel
$h$ is said to have order of degeneracy $d$ if
$\zeta_0^h = \zeta_1^h = \dots = \zeta^h_{d - 1} = 0$ and
$\zeta_{d}^h > 0$ \citep[chapter 5]{Serfling1980}.  If $d\ge 2$, the
kernel and the U-statistic it defines are referred to as degenerate.
For any $c = 1, \dots, k$, it holds under $H_0$ that
\begin{equation} \label{gen_prop} \zeta_c^h = 0 \iff
  \mathbb{E}_0\left[h\left(\mathbf{R}^{(pq)}_{\mathbf{i}}\right) \middle| {\bf
      X}^{(pq)}_{{\bf i}'} \right] = 0, \quad \text{almost surely},
\end{equation}
where
${\bf i}' \subset {\bf i}$ may be any subset with $|{\bf i}'| = c$. In
particular, for the kernels $h_D$ and $h_{t^*}$, the right-hand side
of \eqref{gen_prop} holds with $c \leq 1$.

As in the classical theory of U-statistics,
$\zeta_{d}^h$ will play a role in our asymptotic results for the test statistics
we construct from rank-based
U-statistics, for which the kernels have order of degeneracy $d=1$ or $d=2$
under $H_0$.  However, when $d=2$,  an additional
quantity is needed to describe our asymptotic results.
For a symmetric kernel
$h: (\mathbb{R}^2)^k \longrightarrow \mathbb{R}$ with order of
degeneracy $d = 2$ under $H_0$, we define
\begin{equation}
  \label{eq:eta}
\eta^h \;:=\; \mathbb{E}_0\left[ 
h\left({\bf R}^{(pq)}_{{\bf i}^1}\right)
h\left({\bf R}^{(pq)}_{{\bf i}^2}\right)
h\left({\bf R}^{(pq)}_{{\bf i}^3}\right)
h\left({\bf R}^{(pq)}_{{\bf i}^4}\right)  \right],
\end{equation}
where ${\bf i}^1, \dots {\bf i}^4 \in \mathcal{P}(n, k)$ are any four tuples such that 
\begin{enumerate}
\item $|\cup_{\omega = 1}^4 {\bf i}^\omega| = 4k - 4$,
\item $|{\bf i}^1  \cap {\bf i}^2| =  |{\bf i}^2  \cap {\bf i}^3| = |{\bf i}^3  \cap {\bf i}^4| = |{\bf i}^4  \cap {\bf i}^1| =1$, and 
\item no index $i\in \cup_{\omega = 1}^4 {\bf i}^\omega$ is an element
  of more than two of the sets ${\bf i}^1, \dots {\bf i}^4 $.
\end{enumerate} 
For our purpose we only need to define $\eta^h$ under $H_0$, and it is
also easy to see that the choice of $p$, $q$, ${\bf i}^\omega$,
$\omega = 1, \dots, 4$, does not matter in its
definition. \tabref{degen_tab} collects the order of degeneracy $d$
under $H_0$, and the quantities $\zeta_d^h$ and $\eta^h$ for the
kernels in Example~\ref{ex:ken_ex}--\ref{ex:t^*_ex}.  The latter are found in \citet{hoeffdingU, hoeffdingIndep}, and by our
own calculations.

Finally, it is easy to check that all the kernels in
\exref{ken_ex}--\exssref{t^*_ex} satisfy the following property that will
be assumed for our null asymptotic results.

\begin{assumption} \label{assump:sp_assume}
Let $h: (\mathbb{R}^2)^k \longrightarrow \mathbb{R}$ be a symmetric kernel with order of degeneracy $d \geq 1$ under $H_0$. Then given ${\bf i} = (i_1, \dots, i_k) \in \mathcal{P}(n, k)$ and $1 \leq p \not= q \leq m$,
\[
\mathbb{E}_0 \left[h\left({\mathbf R}_{\bf i}^{(pq)}\right)\middle|  {\bf X}^{(p)}_{\bf j}, {\bf X}^{(q)}_{\bf j'}\right] = 0
\]
for all ${\bf j}, {\bf j}' \subset {\bf i}$ such that $\min(|{\bf j}|, |{\bf j}'|) < d$.
\end{assumption}

\begin{table}
\centering
\caption{Degree $k$, order of degeneracy $d$, covariance $\zeta_d^h$ and fourth
  moment $\eta^h$ for the kernel functions in
  \exref{ken_ex}--\exssref{t^*_ex} when independence holds. }
\begin{tabular}{ c| c |c| c| c }
\hline
Kernel &$h_\tau$& $h_{\hat{\rho}_s}$&  $h_D$ & $h_{t^*}$\\
\hline
\hline
$k$ & 2 & 3&5 &4 \\
\hline
$d$ & 1 & 1&2 &2 \\
\hline
 $\zeta_d^h$& $1/9$ &$1/9$& $1/810000$ & $1/225$ \\
\hline
 $\eta^h$& -- & --& $(7/864000)^2$ & $(2/525)^2$ \\
\hline
\end{tabular}
\label{tab:degen_tab}
\end{table}

\section{Test statistics} \label{sec:main} 

We now proceed to construct test statistics for the independence hypothesis $H_0$ from~(\ref{eq:H0}).  Building on the
pairwise rank correlations from \secref{rank-cor}, we introduce general classes of statistics and derive their
respective asymptotic null distributions when
$m, n \longrightarrow \infty$.

\subsection{Sum of squared sample rank
  correlations} \label{sec:Schott_Stat} 

Let $U^{(pq)}_h$ be a rank-based U-statistic as defined in
\eqref{U_stat}, with mean zero when $X^{(p)}$ and $X^{(q)}$ are
independent.  Suppose further that large absolute values of
$U^{(pq)}_h$ indicate strong association (positive or negative) between
${\bf X}^{(p)}$ and ${\bf X}^{(q)}$.  
It is then natural to reject $H_0$ for large values
of the centered quantity
\begin{equation} \label{centered_sum_of_sq_U}
S_h := \sum_{1 \leq p < q \leq m} \left(U^{(pq)}_h\right)^2 - {m \choose 2} \mu_h.
\end{equation}
Here, $\mu_h := \mathbb{E}_0[(U^{(pq)}_h)^2]$.  Note that, as
indicated by our notation, this expectation does not depend on the
choice of $p$ and $q$ by \lemref{general_properties}$(i)$. The
following lemma specifies $\mu_h$ and gives a result on other moments
of $U_h^{(pq)}$ that will be used later.

\begin{lemma}
  \label{lem:moment_lem} 
  Let $n \ge 2k \geq 2$, and suppose that $U^{(pq)}_h$
  from~\eqref{U_stat} has a kernel $h$ with order of degeneracy $d$
  under $H_0$.  Then the following three facts hold under $H_0$:
\begin{enumerate}
\item   
\[
\mu_h  
    = 
      {n \choose k}^{-1} \sum_{c = 1}^k {k \choose c} {n-k
      \choose k - c } \zeta_c = {k \choose d}^2\frac{ d!\zeta_{d}}{n^{d}} + O\left(n^{-d - 1}\right).
\]   
 \item For any $r>2$,
    \[
    \mathbb{E}_0\left[\big(U^{(pq)}_h\big)^r\right] =
    O\left(n^{-\lfloor(rd +1 )/2\rfloor} \right),
    \]
where $\lfloor\cdot \rfloor$ denotes the floor function.

\item 
\[
 \mathbb{E}_0\left[\big(U^{(pq)}_h\big)^4 \right]  = \begin{cases}
    \displaystyle\frac{3k^{4}(\zeta^h_1)^2}{n^2} + O\left(n^{-3}\right) &\text{if}
    \quad d = 1,\\[0.5cm] 
    \displaystyle
      {k \choose 2}^4\frac{12}{n^{4}}\left((\zeta_2^h)^2+ 
 4\eta^h\right)+
 O\left(n^{-5}\right) &\text{if} \quad d = 2.
  \end{cases}
\]
\end{enumerate}
\end{lemma}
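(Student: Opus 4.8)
The plan is to prove all three parts by expanding the relevant moment of $U^{(pq)}_h$ as a multi-index sum and exploiting the degeneracy of $h$ through \assumpref{sp_assume}. Throughout I fix $p\ne q$, suppress the superscripts, and write $U_h=\binom{n}{k}^{-1}\sum_{\mathbf i}h(\mathbf R_{\mathbf i})$, recalling $\mathbb{E}_0[h(\mathbf R_{\mathbf i})]=0$. For part (i) I would use the classical variance identity: since $\mathbb{E}_0 U_h=0$,
\[
\mu_h=\mathrm{Var}_0(U_h)=\binom{n}{k}^{-1}\sum_{c=0}^{k}\binom{k}{c}\binom{n-k}{k-c}\zeta^h_c,
\]
obtained by grouping the $\binom{n}{k}^2$ covariance terms according to $c=|\mathbf i\cap\mathbf j|$ and using that each covariance equals $\zeta^h_c$ as in \eqref{cov_quan_0}. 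By the order of degeneracy only the terms with $c\ge d$ survive; the term $c=d$ equals $\binom{n}{k}^{-1}\binom{k}{d}\binom{n-k}{k-d}\zeta^h_d$, and a Taylor expansion of the binomial coefficients turns it into $\binom{k}{d}^2 d!\,\zeta^h_d/n^{d}+O(n^{-d-1})$, while each term with $c>d$ is $O(n^{-c})=O(n^{-d-1})$. This gives the stated expansion.

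Both remaining parts rest on a single vanishing lemma that I would establish first: for tuples $\mathbf i_1,\dots,\mathbf i_r\in\mathcal P(n,k)$, if some $\mathbf i_t$ satisfies $|\mathbf i_t\cap\bigcup_{s\ne t}\mathbf i_s|<d$, then $\mathbb{E}_0[\prod_{u}h(\mathbf R_{\mathbf i_u})]=0$. This follows by conditioning on $\{\mathbf X^{(pq)}_v:v\in\bigcup_{s\ne t}\mathbf i_s\}$, pulling out the now-measurable factors with $s\ne t$, and using the independence of the rows $\mathbf X_1,\dots,\mathbf X_n$ to reduce the leftover conditional mean of $h(\mathbf R_{\mathbf i_t})$ to one that conditions only on $\mathbf X^{(p)}_{\mathbf b},\mathbf X^{(q)}_{\mathbf b}$ with $\mathbf b=\mathbf i_t\cap\bigcup_{s\ne t}\mathbf i_s$; since $|\mathbf b|<d$, \assumpref{sp_assume} makes this zero.

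Given the lemma, part (ii) is a counting estimate. In a non-vanishing term each $\mathbf i_t$ has at most $k-d$ private indices. Writing $\ell$ for the number of distinct indices used, $a$ for the private ones and $s$ for the shared ones, the total slot count $rk\ge a+2s$ together with $a\le r(k-d)$ forces
\[
\ell=a+s\le \tfrac{1}{2}(rk+a)\le rk-\tfrac{rd}{2},
\]
hence $\ell\le rk-\lceil rd/2\rceil=rk-\lfloor(rd+1)/2\rfloor$ because $\ell$ is an integer. As $h$ is bounded each term is $O(1)$, there are $O(n^{\ell})$ non-vanishing terms, and multiplying by $\binom{n}{k}^{-r}=O(n^{-rk})$ yields the claimed bound.

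For part (iii) ($r=4$) I would refine this bookkeeping to extract the leading coefficient, which is the main work. The dominant configurations are those attaining $\ell=4k-2d$, which by the slot count forces every shared index to have multiplicity exactly two, so the non-vanishing ones are classified by the intersection (multi)graph on the four tuples. When $d=1$ the only survivors are the three perfect matchings, each factoring into two independent pairwise intersections and contributing $\mu_h^2$ up to lower order, so $\mathbb{E}_0[U_h^4]=3\mu_h^2+O(n^{-3})=3k^4(\zeta^h_1)^2/n^2+O(n^{-3})$ after inserting part (i). When $d=2$ the graph must be $2$-regular on four vertices; a short case check (ruling out multiplicity-three indices, which always leave some tuple sharing fewer than two indices and hence vanish) shows the only possibilities are the three ``double-edge'' matchings and the three $4$-cycles. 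The double-edge configurations factor into two intersections of size two and contribute $\binom{k}{2}^4\tfrac{12}{n^4}(\zeta^h_2)^2$, while each $4$-cycle is exactly the overlap pattern defining $\eta^h$ in \eqref{eq:eta} and contributes $16\binom{k}{2}^4\eta^h/n^4$, for a total of $\binom{k}{2}^4\tfrac{12}{n^4}\cdot 4\eta^h$; summing the two types gives the stated expansion, with all remaining configurations ($\ell\le 4k-2d-1$) absorbed into the error term. The hardest step is this exhaustive classification of the leading graphs for $d=2$ and the verification that the $4$-cycle count reproduces precisely the coefficient of $\eta^h$.
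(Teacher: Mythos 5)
Your proposal is correct, and for part (iii) --- the only part the paper actually proves --- it follows essentially the same route as the paper's Appendix proof: a degeneracy-based vanishing lemma (your general-$r$ lemma is the analogue of the paper's \lemref{tech_lem}, proved the same way via conditioning and \assumpref{sp_assume}), reduction to configurations with $|\cup_\omega \mathbf{i}^\omega| = 4k-2d$, and a classification of the surviving overlap patterns with explicit counting; your counts ($3$ matchings giving $12\binom{k}{2}^4(\zeta_2^h)^2/n^4$ and $3$ four-cycles giving $48\binom{k}{2}^4\eta^h/n^4$ when $d=2$) agree exactly with the paper's \eqref{fourth-moment-count} and \eqref{countforlockedtuples}. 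The genuine differences are peripheral but worthwhile: the paper simply cites Serfling's Lemmas 5.2.1A and 5.2.2B for parts (i) and (ii), whereas you prove both from scratch --- your slot-counting bound $\ell \le rk - \lceil rd/2\rceil = rk - \lfloor (rd+1)/2\rfloor$ is a clean self-contained derivation of (ii); and for the $d=1$ case of (iii) you shortcut the recount by writing the matching contribution as $3\mu_h^2 + O(n^{-3})$ and invoking part (i), where the paper redoes the combinatorics. What your version buys is self-containedness and a tidier graph-theoretic packaging (the $2$-regular multigraph classification makes the exhaustiveness of ``double edges plus $4$-cycles'' transparent); what the paper's version buys is brevity, by outsourcing the standard U-statistic moment bounds to the literature.
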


For \lemref{moment_lem}$(i)$ and $(ii)$, see~Lemma 5.2.1A and 5.2.2B
in \cite{Serfling1980}. The last claim about the leading term of the
fourth moment is proven in Appendix~\ref{sec:PfSec3}.  Let
$\mu_{\tau}$, $\mu_{\hat{\rho}_s}$, $\mu_{D}$ and $\mu_{t^*}$ be the
values of $\mu_h$ when $h$ is equal to $h_{\tau}$, $h_{\hat{\rho}_s}$,
$h_{D}$ and $h_{t^*}$ respectively. Then
\begin{align*}
\mu_{\tau} &= \frac{2(2n+5)}{9n (n-1)},
 &\mu_{\hat{\rho}_s} &= \frac{ (n^2 - 3)}{n(n-1)(n-2)},\\
\mu_{D} &= \frac{2(n^2 + 5n - 32)}{9n (n-1)(n-3)(n-4)},
&\mu_{t^*} &= \frac{8}{75} \frac{3n^2 + 5n - 18}{n(n-1)(n-2)(n-3)}.
\end{align*}
The first three quantities can be found in \citet{hoeffdingU,
  hoeffdingIndep}.  The stated value of $\mu_{t^*}$ is based on our
own calculations.

\subsection{Unbiased estimator of the sum of squared population correlations}

The kernel function $h$ is central to the role of $U^{(pq)}_h$ as a
measure of association between the vectors of observations
${\bf X}^{(p)}$ and ${\bf X}^{(q)}$.   At the population level, the
association (positive or negative) is captured by the expectation of  $U^{(pq)}_h$, which is
also equal to 
\begin{equation} \label{pop_cor}
\theta_h^{(pq)} := \mathbb{E}\left[h\left({\bf R}^{(pq)}_{{\bf j}}\right)\right],
\end{equation} 
where ${\bf j}$ may be any element in $\mathcal{P}(n, k)$.  Hence,
\begin{equation} \label{kernel_signal}
\sum_{1\leq p < q \leq m} (\theta_h^{(pq)})^2
\end{equation}
is a population measure of overall dependency in the joint
distribution of $X^{(1)}, \dots, X^{(m)}$.  
As an alternative
approach to Section~\ref{sec:Schott_Stat}, we now construct an unbiased estimator of
\eqref{kernel_signal}, targeting more directly the problem of global
(in-)dependence.

Recall that given ${\bf i} \in \mathcal{P}(n,2k)$ and
${\bf j}\in \mathcal{P}(n, k)$ such that ${\bf j} \subset {\bf i}$ as
sets, ${\bf i}\setminus {\bf j}$ is the $k$-tuple in
$\mathcal{P}(n, k)$ that is given by their set difference.  The
function 
\begin{equation} \label{hw_kern}
h^W\left({\bf R}^{(pq)}_{{\bf i}}\right) \;:=\;
{2k \choose k}^{-1}\sum_{\substack{{\bf j} \subset {\bf i} \\  {\bf j} \in \mathcal{P}(n, k)}} h\left({\bf R}^{(pq)}_{{\bf j}}\right) h\left({\bf R}^{(pq)}_{{\bf i}\setminus {\bf j}}\right)
\end{equation}
defined on the domain $(\mathbb{R}^2)^{2k}$ is symmetric in its $2k$ arguments
${\bf R}^{(pq)}_{{\bf i}, 1}, \dots, {\bf R}^{(pq)}_{{\bf i}, 2k}$, due to the
symmetry of $h$ and the summation over all possible tuples
${\bf j} \in \mathcal{P}(n, k)$ contained in ${\bf i}$ on the right hand side of
\eqref{hw_kern}.  Moreover, $h^W$ is an unbiased estimator of the
square of the expectation in~\eqref{pop_cor}, since each summand on
the right hand side of \eqref{hw_kern} is a product of two independent
unbiased estimators of
$\theta_h^{(pq)}$.
Therefore, defining the U-statistic
\begin{equation} \label{W_stat_unscaled}
W_h^{(pq)} = W_h^{(pq)}\left({\bf R}_1^{(pq)}, \dots, {\bf R}_n^{(pq)}\right) =  {n \choose 2k}^{-1} \sum_{{\bf i}\in \mathcal{P}(n, 2k)} h^W\left({\bf R}^{(pq)}_{{\bf i}}\right),
\end{equation}
we have that the sum 
\begin{equation} \label{sum_of_w_kern_stat}
T_h := \sum_{1 \leq p < q \leq m} W_h^{(pq)}
\end{equation}
is an unbiased estimator of \eqref{kernel_signal}.  The statistic
$T_h$ is a U-statistic itself and serves as a natural test statistic
for $H_0$.  Large values of $T_h$ indicate departures from $H_0$.
When $h = h_\tau$, i.e., the case of Kendall's tau,
$T_h$ equals the statistic displayed in \eqref{kendall_W_kern_stat} in
the introduction.

Clearly, $W_h^{(pq)}$ is a rank-based U-statistic with the kernel
$h^W$ of degree $2k$. 
The following lemma summarizes the degeneracy properties of $h^W$
under $H_0$.

\begin{lemma}\label{lem:degen_relate}
Suppose $h : (\mathbb{R}^2)^k \longrightarrow \mathbb{R}$ is a
symmetric kernel function of degree $k$ with order of degeneracy
$d\in\{1,2\}$  under $H_0$.  So, $\zeta_d^h > 0$. Then, under $H_0$, the induced symmetric kernel function $h^W$ defined in \eqref{hw_kern} has order of degeneracy $2d$ and 
\begin{align*}
\zeta^{h^W}_{2d} &= \mathbb{E}_0\left[h^W\left(\mathbf{ R}^{(pq)}_{{\bf i}}
                   \right)h^W \left(
                   \mathbf{R}^{(pq)}_{{\bf j}}\right)\right]\\
&\phantom{:}= 
  \begin{cases}
    \displaystyle
   4 {2k - 2 \choose k -1}^2 {2k \choose k}^{-2}
   \big(\zeta_d^h\big)^2&\text{if} \quad d = 1,\\[0.2cm]
   \displaystyle
    12 {2k - 4 \choose k -2}^2 {2k \choose k }^{-2}\left\{\big(\zeta_d^h\big)^2 + 2 \eta^h\right\} &\text{if} \quad d = 2,
  \end{cases}
\end{align*}
where ${\bf i}, {\bf j} \in \mathcal{P}(n, 2k)$ and $|{\bf i}\cap {\bf j}| = 2d$.
\end{lemma}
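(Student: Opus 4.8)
The plan is to obtain the covariances $\zeta_c^{h^W}$ from the Hoeffding projections of $h^W$, using that conditioning on the shared observations decouples the two half-kernels making up $h^W$. Write $H_{\bf a}:=h({\bf R}^{(pq)}_{\bf a})$ for ${\bf a}\in\mathcal{P}(n,k)$; this is a function of ${\bf X}^{(pq)}_{\bf a}$ with $\mathbb{E}_0[H_{\bf a}]=0$, and $h^W({\bf R}^{(pq)}_{\bf i})=\binom{2k}{k}^{-1}\sum_{{\bf a}\subset{\bf i}}H_{\bf a}H_{{\bf i}\setminus{\bf a}}$ runs over the complementary splits of ${\bf i}\in\mathcal{P}(n,2k)$. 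Since $h^W$ is symmetric and rank-based, the equivalence \eqref{gen_prop} applies to it, so it suffices throughout to analyse the conditional expectations $\mathbb{E}_0[h^W({\bf R}^{(pq)}_{\bf i})\mid{\bf X}^{(pq)}_{{\bf i}'}]$ for ${\bf i}'\subset{\bf i}$; I abbreviate $x_a:={\bf X}^{(pq)}_{s_a}$ for shared indices $s_a$.

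First I would show that $h^W$ has order of degeneracy at least $2d$. Fix ${\bf i}'\subset{\bf i}$ with $|{\bf i}'|=c$. In each split the factors $H_{\bf a}$ and $H_{{\bf i}\setminus{\bf a}}$ depend on disjoint sets of observations, so conditionally on ${\bf X}^{(pq)}_{{\bf i}'}$ they are independent and
\[
\mathbb{E}_0\!\left[H_{\bf a}H_{{\bf i}\setminus{\bf a}}\mid{\bf X}^{(pq)}_{{\bf i}'}\right]=\mathbb{E}_0\!\left[H_{\bf a}\mid{\bf X}^{(pq)}_{{\bf a}\cap{\bf i}'}\right]\,\mathbb{E}_0\!\left[H_{{\bf i}\setminus{\bf a}}\mid{\bf X}^{(pq)}_{({\bf i}\setminus{\bf a})\cap{\bf i}'}\right].
\]
Because $h$ has order of degeneracy $d$, \eqref{gen_prop} forces each factor to vanish whenever it is conditioned on fewer than $d$ of the pairs; hence a split contributes only if both halves meet ${\bf i}'$ in at least $d$ indices, which requires $c=|{\bf i}'|\ge2d$. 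Thus $\mathbb{E}_0[h^W\mid{\bf X}^{(pq)}_{{\bf i}'}]=0$, i.e.\ $\zeta_c^{h^W}=0$, for every $c<2d$.

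Next I would compute the surviving $2d$-th projection. Taking ${\bf i},{\bf j}\in\mathcal{P}(n,2k)$ with ${\bf i}\cap{\bf j}={\bf s}$, $|{\bf s}|=2d$, the conditional independence of ${\bf X}^{(pq)}_{{\bf i}\setminus{\bf s}}$ and ${\bf X}^{(pq)}_{{\bf j}\setminus{\bf s}}$ given ${\bf X}^{(pq)}_{\bf s}$ gives $\zeta_{2d}^{h^W}=\mathbb{E}_0[(h^W_{2d})^2]$ with $h^W_{2d}:=\mathbb{E}_0[h^W({\bf R}^{(pq)}_{\bf i})\mid{\bf X}^{(pq)}_{\bf s}]$. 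By the previous paragraph only splits placing exactly $d$ shared indices on each side survive, each half contributing the $d$-th projection $h_d$ of $h$. For $d=1$ one gets $h^W_{2}=2\binom{2k-2}{k-1}\binom{2k}{k}^{-1}h_1(x_1)h_1(x_2)$, and squaring, using independence and $\mathbb{E}_0[h_1^2]=\zeta_1^h$, yields the stated $4\binom{2k-2}{k-1}^2\binom{2k}{k}^{-2}(\zeta_1^h)^2$. For $d=2$, writing ${\bf s}=\{s_1,\dots,s_4\}$ and $G:=h_2(x_1,x_2)h_2(x_3,x_4)+h_2(x_1,x_3)h_2(x_2,x_4)+h_2(x_1,x_4)h_2(x_2,x_3)$ for the sum over the three pairings of ${\bf s}$, the same count gives $h^W_{4}=2\binom{2k-4}{k-2}\binom{2k}{k}^{-1}G$, so that $\zeta_4^{h^W}=4\binom{2k-4}{k-2}^2\binom{2k}{k}^{-2}\mathbb{E}_0[G^2]$ and it remains to evaluate $\mathbb{E}_0[G^2]$.

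The crux — and the step I expect to be the main obstacle — is to identify the cross terms of $\mathbb{E}_0[G^2]$ with the quantity $\eta^h$ of \eqref{eq:eta}. The three diagonal terms each equal $(\zeta_2^h)^2$, since $\mathbb{E}_0[h_2^2]=\zeta_2^h$ and the four observations are independent. In each of the six cross terms the indices $s_1,\dots,s_4$ occur in exactly two of the four $h_2$-factors; realising each factor as $\mathbb{E}_0[H_{{\bf a}_r}\mid\cdot]$ over a $k$-tuple ${\bf a}_r$ carrying its two shared indices together with $k-2$ fresh ones chosen disjoint across $r$, the product of the projections equals $\mathbb{E}_0[\prod_{r=1}^4 H_{{\bf a}_r}]$. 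The sharing graph of ${\bf a}_1,\dots,{\bf a}_4$ is then a $4$-cycle on $\{s_1,\dots,s_4\}$ — consecutive intersections of size $1$, empty diagonal intersections, union of size $4k-4$ — which are exactly conditions (i)--(iii) defining $\eta^h$. Hence each cross term equals $\eta^h$, giving $\mathbb{E}_0[G^2]=3(\zeta_2^h)^2+6\eta^h$ and producing the factor $4\cdot3=12$ and the bracket $(\zeta_2^h)^2+2\eta^h$. Finally, $\zeta_{2d}^{h^W}=\mathbb{E}_0[(h^W_{2d})^2]\ge0$ is strictly positive because the projection is a nonzero multiple of $h_1(x_1)h_1(x_2)$ (resp.\ $G$), which does not vanish identically once $\zeta_d^h>0$; equivalently, viewing $h_2$ as the kernel of a self-adjoint Hilbert--Schmidt operator $\mathcal H$ with eigenvalues $(\lambda_i)$, one has $\zeta_2^h=\sum_i\lambda_i^2$ and $\eta^h=\operatorname{tr}(\mathcal H^4)=\sum_i\lambda_i^4\ge0$, so $(\zeta_2^h)^2+2\eta^h>0$. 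This confirms that the order of degeneracy of $h^W$ is exactly $2d$.
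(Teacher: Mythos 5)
Your proof is correct, and it takes a genuinely different route from the paper's. The paper never forms projections: it expands $\mathbb{E}_0\left[h^W\left(\mathbf{R}^{(pq)}_{\mathbf{i}}\right)h^W\left(\mathbf{R}^{(pq)}_{\mathbf{j}}\right)\right]$ via \eqref{hw_kern} into a double sum of fourth-order moments of $h$ and invokes its technical lemma (\lemref{tech_lem}$(i)$) to show that every summand vanishes when $|\mathbf{i}\cap\mathbf{j}|<2d$, and, when $|\mathbf{i}\cap\mathbf{j}|=2d$, to classify and count the surviving index constellations directly: $2\binom{2d}{d}\binom{2k-2d}{k-d}^2$ summands of value $(\zeta_d^h)^2$, plus, for $d=2$, $4!\binom{2k-4}{k-2}^2$ ``four-cycle'' constellations whose value is $\eta^h$ by definition \eqref{eq:eta}. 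You instead compute the Hoeffding projection $h^W_{2d}=\mathbb{E}_0[h^W\mid \mathbf{X}^{(pq)}_{\mathbf{s}}]$ in closed form---it factorizes into products of the $d$-th projection of $h$---and then use $\zeta^{h^W}_{2d}=\mathbb{E}_0[(h^W_{2d})^2]$; your split count $2\binom{2k-2d}{k-d}$, squared, together with the three pairings and six cross terms in $\mathbb{E}_0[G^2]$, reproduces exactly the paper's constellation counts ($4\cdot 3=12$ and $4\cdot 6=24=12\cdot 2$). The delicate step on your route---identifying each cross term of $\mathbb{E}_0[G^2]$ with $\eta^h$---is handled correctly by lifting each factor $h_2$ back to a conditional expectation of $h$ over a $k$-tuple carrying its two shared indices plus disjoint fresh ones, which recovers precisely conditions (i)--(iii) of \eqref{eq:eta}; the paper gets this identification for free because $\eta^h$ appears in its expansion directly as the value of the four-cycle constellation. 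Your approach buys something the paper's proof does not make explicit: the representation $\eta^h=\operatorname{tr}(\mathcal{H}^4)=\sum_i\lambda_i^4\ge 0$ shows $(\zeta_2^h)^2+2\eta^h>0$, so the order of degeneracy is \emph{exactly} $2d$ also when $d=2$, a positivity claim that in the paper is visible only from the computed values of $\eta^h$ in \tabref{degen_tab}. What the paper's route buys is economy of machinery: the same combinatorial lemma \lemref{tech_lem} is reused verbatim for \lemsref{4prod_lem1}--\ref{lem:4prod_lem3}, whereas your projection argument is tailored to this one statement.
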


The proof of the lemma is deferred to Appendix~\ref{sec:PfSec3}.

\subsection{Sum of sample rank correlations}
For testing $H_0$ it is also interesting to consider the simple sum
\begin{equation} \label{simple_sum_of_U}
Z_h := \sum_{1 \leq p < q \leq m} U^{(pq)}_h,
\end{equation}
which unbiasedly estimates the signal
\begin{equation} \label{kernel_signal_one_side}
\sum_{1\leq p < q \leq m}\theta_h^{(pq)};
\end{equation}
compare with \eqref{kernel_signal}. When the kernel $h$ is
$h_{\hat{\rho}_s}$ or $h_{\tau}$, without the squaring as in
\eqref{kernel_signal}, \eqref{kernel_signal_one_side} may not be an
effective measure for the overall dependency of $X^{(1)}, \dots,
X^{(m)}$ since any pairwise signal $\theta_h^{(pq)}$ can be either
negative or positive depending on the direction of association
\citep{kruskal1958}.
Hence, the rejection of $H_0$ for large value of $Z_h$ is only good
for testing against the ``one-sided" alternative
\[
\sum_{1 \leq p< q \leq m} \theta_h^{(pq)} > 0 , \quad \theta^{(pq)}_h \geq 0 \text{ for all } p < q.
\]

However, when $h=h_{t^*}$ or $h=h_{D}$, \eqref{kernel_signal_one_side}
is an effective measure of the overall dependency of
$X^{(1)}, \dots, X^{(m)}$, since any pairwise signal $\theta_h^{(pq)}$
is non-negative and equals zero if and only if $X^{(p)}$ and $X^{(q)}$
are independent under the weak assumptions in the work of
\citet{hoeffdingIndep} and \citet{bergsma:2014}. In this case, large
values of $Z_h$ detect dependency among $X^{(1)}, \dots, X^{(m)}$,
without any restrictions to the direction of the pairwise associations.

%

\section{Asymptotic null distributions}
\label{sec:asympt-null-distr}

We are now ready to state our results on the asymptotic distributions
for the test statistics introduced in
Section~\ref{sec:main}.  As mentioned in \secref{rank-cor}, we
focus on rank-based U-statistics with a kernel $h$ satisfying
\assumpref{sp_assume} and order of degeneracy $d\in\{1,2\}$ under
$H_0$.


\begin{theorem}\label{thm:main}
  Suppose the null hypothesis $H_0$ from~(\ref{eq:H0}) is
  true.  Let $h$ be a symmetric bounded kernel function of degree $k$
  satisfying \assumpref{sp_assume}, and consider the asymptotic regime $m, n \longrightarrow  \infty$.  If $d= 1$, after suitable rescaling, 
$S_h$, $T_h$ and $Z_h$ are asymptotically normal, namely,
\[
\frac{nS_h}{k^2 m\zeta^h_1 },   \;\frac{nT_h}{k^2m\zeta^h_1 }, \; \frac{\sqrt{2n}Z_h}{k m \sqrt{\zeta_1^h}}\;\Longrightarrow\; \mathcal{N}(0, 1). 
\]
If $d= 2$, then
\[
\frac{n^2 {k \choose 2}^{-2}S_h}{2 m\sqrt{(\zeta^h_2)^2 + 6 \eta^h }}   , \;\frac{ n^2 {k \choose 2}^{-2} T_h}{2  m\sqrt{(\zeta^h_2)^2 + 2 \eta^h }}, \; \frac{n {k \choose 2}^{-1} Z_h}{m \sqrt{\zeta_2^h}} \;\Longrightarrow\;
 \mathcal{N}(0, 1).
\]
\end{theorem}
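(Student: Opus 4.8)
The plan is to derive all three limits from a single martingale central limit theorem for arrays, organized along the \emph{variable} index rather than the sample index. Introduce the filtration $\mathcal{G}_p := \sigma(\mathbf{R}^{(1)},\dots,\mathbf{R}^{(p)})$ for $p=1,\dots,m$. It is convenient to first isolate the common core: for an arbitrary centered bounded rank-based kernel $g$ of any fixed order of degeneracy, set $Z_g := \sum_{1\le p<q\le m} U_g^{(pq)}$ and write $Z_g = \sum_{p=2}^m V_p$ with $V_p := \sum_{q<p} U_g^{(qp)}$. Then $Z_h$ is the case $g=h$, while $T_h$ is the case $g=h^W$, since $T_h=\sum_{p<q}W_h^{(pq)}=Z_{h^W}$ and, by \lemref{degen_relate}, $h^W$ is a bounded rank-based kernel of degree $2k$ and order of degeneracy $2d$. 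The statistic $S_h$ is not literally of this form, but it admits the parallel decomposition $S_h=\sum_{p=2}^m V_p^S$ with $V_p^S:=\sum_{q<p}\big((U_h^{(qp)})^2-\mu_h\big)$. First I would verify that $\{V_p\}$ and $\{V_p^S\}$ are martingale difference arrays: since $U_g^{(qp)}$ (hence any function of it) is independent of $\mathbf{R}^{(q)}$ by \lemref{general_properties}$(ii)$ and $\mathbf{R}^{(p)}$ is independent of $\mathcal{G}_{p-1}$, the conditional mean $\mathbb{E}_0[V_p\mid\mathcal{G}_{p-1}]$ collapses to an unconditional mean, which vanishes for $Z_g$ and vanishes for $S_h$ after the built-in centering by $\mu_h$.

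The normalizing constants follow from elementary variance computations that use only \lemref{general_properties}$(iv)$: summands attached to distinct pairs are independent, so the variance is additive over the $\binom{m}{2}$ pairs, giving $\text{Var}_0(Z_g)=\binom{m}{2}\mu_g$ and $\text{Var}_0(S_h)=\binom{m}{2}\,\text{Var}_0\big((U_h^{(12)})^2\big)$. Inserting the leading terms of $\mu_g$ from \lemref{moment_lem}$(i)$ — applied to $g=h$ and, via the value of $\zeta_{2d}^{h^W}$ in \lemref{degen_relate}, to $g=h^W$ — reproduces the stated scalings for $Z_h$ and $T_h$. For $S_h$ the constant comes from $\text{Var}_0\big((U_h^{(12)})^2\big)=\mathbb{E}_0[(U_h^{(12)})^4]-\mu_h^2$, whose leading term is exactly $\binom{k}{2}^4\,8\big((\zeta_2^h)^2+6\eta^h\big)/n^4$ by \lemref{moment_lem}$(iii)$; this is precisely where the coefficient $6\eta^h$ for $S_h$ (as opposed to $2\eta^h$ for $T_h$) originates. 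A feature I would exploit throughout is that, for fixed $p$, the summands $\{U_h^{(qp)}\}_{q<p}$ are \emph{mutually} independent by \lemref{general_properties}$(iii)$, which makes the moments of $V_p$ and $V_p^S$ tractable.

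It then remains to verify the two standard hypotheses of the martingale array CLT for each statistic (writing $V_p$ generically for $V_p$ or $V_p^S$), namely (a) the normalized sum of conditional variances $s_m^{-2}\sum_p \mathbb{E}_0[V_p^2\mid\mathcal{G}_{p-1}]$ converges in probability to $1$, where $s_m^2$ is the variance above, and (b) a conditional Lindeberg condition, which I would obtain from the stronger Lyapunov bound $s_m^{-4}\sum_p\mathbb{E}_0[V_p^4]\to 0$. Condition (b) is routine: mutual independence within each $V_p$ gives $\mathbb{E}_0[V_p^4]\lesssim (p-1)^2\sigma^4+(p-1)\kappa$, where $\sigma^2$ is the per-pair variance and $\kappa$ the per-pair fourth moment, the latter controlled by the moment orders in \lemref{moment_lem}$(ii)$; summation over $p$ yields a bound of order $1/m$. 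The martingale structure and both verifications are insensitive to the order of degeneracy — only the explicit constants change — which is what lets the same argument cover $h$ and $h^W$ at once.

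The hard part will be condition (a). Expanding $\mathbb{E}_0[V_p^2\mid\mathcal{G}_{p-1}]$ as a double sum over $q,q'<p$, the diagonal terms $q=q'$ are \emph{deterministic}, again because each summand is independent of $\mathbf{R}^{(q)}$, and they sum exactly to $s_m^2$; the off-diagonal terms $q\ne q'$ are mean-zero functions of $(\mathbf{R}^{(q)},\mathbf{R}^{(q')})$. The real work is to show their aggregate is $o_P(s_m^2)$, which I would do by bounding the variance of the off-diagonal sum through a careful accounting of index overlaps among the rank vectors involved, repeatedly invoking the degeneracy hypothesis \assumpref{sp_assume} and the moment orders of \lemref{moment_lem}$(ii)$. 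This bookkeeping is most delicate for $S_h$ in the degenerate case $d=2$, where products of fourth-order quantities appear and one must confirm that no overlap term disturbs the leading constant $(\zeta_2^h)^2+6\eta^h$ already identified from the per-pair variance.
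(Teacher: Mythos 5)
Your proposal is correct and takes essentially the same route as the paper: the same martingale array indexed by the variables with filtration $\sigma(\mathbf{R}^{(1)},\dots,\mathbf{R}^{(q)})$, the same martingale CLT, the same variance and Lyapunov computations via \lemref{moment_lem}, and the same identification of the off-diagonal conditional-variance terms as the delicate step, to be controlled by overlap counting under \assumpref{sp_assume} (carried out in the paper's \lemsref{4prod_lem1}--\lemssref{4prod_lem3}). Your one packaging difference---treating $T_h$ as $Z_{h^W}$ so a single generic argument covers both sums---is implicit in the paper too, which transfers the needed moment facts to $h^W$ via \lemref{degen_relate}.
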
 

The theorem covers in particular the rank correlations from Examples
\ref{ex:ken_ex}--\exssref{t^*_ex}.  
A critical value for an approximate $\alpha$-size test can thus be
calibrated based on normal quantiles.  As in the classical theory for
U-statistics, the rescaling factors for the non-degenerate and
degenerate cases differ in order; for instance, we have to multiply
$S_h$ with a factor of order $O(n/m)$ when $h$ has order of degeneracy
$d = 1$, and with a factor of order $O(n^2/m)$ when $h$ has order of
degeneracy $d = 2$. The ingredients needed to compute the rescaling
factors were given in \tabref{degen_tab}.  In slight abbreviation, we
write $S_\tau$, $S_{\hat{\rho}_s}$, $S_D$ and $S_{t^*}$ for the four
versions of the statistic $S_h$ from \eqref{centered_sum_of_sq_U} with
the different kernels reviewed in \secref{rank-cor}, and analogously,
$T_\tau$, $T_{\hat{\rho}_s}$, $T_D$, $T_{t^*}$ and $Z_\tau$,
$Z_{\hat{\rho}_s}$, $Z_D$, $Z_{t^*}$ for the versions of $T_h$ and
$Z_h$ from \eqref{sum_of_w_kern_stat} and \eqref{simple_sum_of_U}.
This matches the notation used in ~(\ref{eq:kendall-sample-sum}),
\eqref{kendall_W_kern_stat} and \eqref{sumoftaus}.

We remark that while the classical Spearman's rho is not a U-statistic
one may of course consider the centered test statistic 
\begin{equation} \label{rho_test_stat}
S_{\rho_s} := \sum_{1 \leq p < q \leq m} \left(\rho^{(pq)}_s\right)^2 - {m \choose 2} \mu_{\rho_s},
\end{equation}
where $\mu_{\rho_s} := \mathbb{E}_0[(\rho^{(pq)}_s)^2] = 1/(n -1)$; see
\citet[p.321]{hoeffdingU}.  The convergence of $\frac{n}{m}S_{\hat{\rho}_s}$ to a
standard normal distribution, as suggested by \thmref{main} and \tabref{degen_tab}, 
implies the
following distributional convergence for $S_{\rho_s} $. Its proof, given in Appendix~\ref{sec:PfSec4},
makes use of the decomposition from~(\ref{rho_decomp}). The same
result has been obtained by \citet{zhou2007} and \citet{SpearmanTest} 
via different methods.

\begin{corollary} \label{spearnull}
Under $H_0$, $\frac{n}{m}S_{\rho_s} \Longrightarrow N(0, 1)$ as $m, n \longrightarrow \infty$. 
\end{corollary}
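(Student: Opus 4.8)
The plan is to exploit the exact algebraic decomposition \eqref{rho_decomp}, namely $\rho^{(pq)}_s = a_n \hat{\rho}^{(pq)}_s + b_n \tau^{(pq)}$ with $a_n = (n-2)/(n+1)$ and $b_n = 3/(n+1)$, to rewrite $S_{\rho_s}$ as a linear combination of $S_{\hat{\rho}_s}$, $S_\tau$ and a centered cross-term, and then to combine \thmref{main} with Slutsky's theorem. First I would square \eqref{rho_decomp} and sum over $1 \leq p < q \leq m$. Writing $C^{(pq)} := \hat{\rho}^{(pq)}_s \tau^{(pq)}$ and $\mu_C := \mathbb{E}_0[C^{(pq)}]$ (well defined and independent of $(p,q)$ by \lemref{general_properties}$(i)$), and noting that taking $\mathbb{E}_0$ of the squared identity yields $\mu_{\rho_s} = a_n^2 \mu_{\hat{\rho}_s} + 2 a_n b_n \mu_C + b_n^2 \mu_\tau$, the constant centering terms cancel exactly and I obtain
\[
S_{\rho_s} = a_n^2\, S_{\hat{\rho}_s} + b_n^2\, S_\tau + 2 a_n b_n \Big(\sum_{p<q} C^{(pq)} - \binom{m}{2}\mu_C\Big).
\]

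Multiplying by $n/m$, the leading term is $a_n^2 \cdot \tfrac{n}{m}S_{\hat{\rho}_s}$, and since $a_n^2 \to 1$ while $\tfrac{n}{m}S_{\hat{\rho}_s}\Longrightarrow \mathcal{N}(0,1)$ by \thmref{main} (here $k=3$ and $\zeta_1^h = 1/9$ from \tabref{degen_tab}, so $k^2\zeta_1^h = 1$), it remains to show the other two terms are $o_P(1)$. For the $S_\tau$-term, $b_n^2 = O(n^{-2})$ whereas $\tfrac{n}{m}S_\tau = O_P(1)$ by \thmref{main} applied with $k=2$, so the product is $o_P(1)$. The substance of the argument is the cross-term: since $a_n b_n = O(n^{-1})$, one has $\tfrac{n}{m}\cdot 2 a_n b_n = O(m^{-1})$, and I must establish that $\tfrac{1}{m}\big(\sum_{p<q} C^{(pq)} - \binom{m}{2}\mu_C\big) = o_P(1)$.

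This centered quantity has mean zero under $H_0$, so by Chebyshev's inequality it suffices to bound its variance. The key observation is that each $C^{(pq)}$ is a symmetric function of the $n$ rank-pairs $\mathbf{R}^{(pq)}_1, \dots, \mathbf{R}^{(pq)}_n$ (being a product of two rank-based U-statistics), so \lemref{general_properties}$(iv)$ applies and $C^{(pq)}$ and $C^{(rs)}$ are independent whenever $\{p,q\}\neq\{r,s\}$. Consequently every cross-covariance vanishes and $\mathrm{Var}_0\big[\sum_{p<q} C^{(pq)}\big] = \binom{m}{2}\,\mathrm{Var}_0[C^{(12)}]$. Finally I would bound the single remaining variance via Cauchy--Schwarz, $\mathrm{Var}_0[C^{(12)}] \leq \mathbb{E}_0\big[(\hat{\rho}^{(12)}_s)^2(\tau^{(12)})^2\big] \leq \big(\mathbb{E}_0[(\hat{\rho}^{(12)}_s)^4]\,\mathbb{E}_0[(\tau^{(12)})^4]\big)^{1/2}$; since both kernels have order of degeneracy $d=1$, \lemref{moment_lem}$(ii)$ with $r=4$ gives each fourth moment as $O(n^{-2})$, whence $\mathrm{Var}_0[C^{(12)}] = O(n^{-2})$.

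Putting these together, $\mathrm{Var}_0\big[\tfrac{1}{m}\sum_{p<q} C^{(pq)}\big] = \tfrac{1}{m^2}\binom{m}{2}\,O(n^{-2}) = O(n^{-2}) \to 0$, so the cross-term is $o_P(1)$, and Slutsky's theorem then yields $\tfrac{n}{m}S_{\rho_s}\Longrightarrow \mathcal{N}(0,1)$. I expect the step demanding the most care to be the exact independence structure furnished by \lemref{general_properties}$(iv)$: it is precisely what collapses the double sum of covariances into a single variance and makes the order of the cross-term computable, so that the $\tau$-contribution to Spearman's rho is asymptotically negligible and the limit is inherited from $S_{\hat{\rho}_s}$.
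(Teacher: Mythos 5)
Your proposal is correct and follows essentially the same route as the paper's own proof: the decomposition \eqref{rho_decomp}, exact cancellation of the centering constants, negligibility of the $S_\tau$ term via \thmref{main}, and an $L^2$/Chebyshev bound on the centered cross term using \lemref{general_properties}$(i)$,$(iv)$ together with the fourth-moment bound from \lemref{moment_lem}$(ii)$. The only cosmetic difference is that you bound $\mathrm{Var}_0\left[\hat{\rho}^{(12)}_s\tau^{(12)}\right]$ by Cauchy--Schwarz where the paper uses $2xy\le x^2+y^2$, which is immaterial.
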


%
%
%

Our proof of Theorem~\ref{thm:main} is based on a central limit
theorem for martingale arrays \citep[Corollary 3.1]{HeydeAndHall} that
was also applied by \cite{Schott2005}.  We outline the approach here,
postponing computations verifying the conditions of the martingale CLT
to Appendix~\ref{sec:PfSec4}.

\begin{proof}[Proof of \thmref{main}]
  Fix a sample size $n$.  For $q=1,\dots,m$, let $\mathcal{F}_{nq}$ be
  the $\sigma$-algebra generated by
  $\mathbf{X}^{(1)}, \dots, \mathbf{X}^{(q)}$ (or for our purposes,
  equivalently, $\mathbf{R}^{(1)},\dots,\mathbf{R}^{(q)}$) under
  $H_0$.  For convenience we will use the shorthand
  $\bar{U}^{(pq)}_h := \left(U_h^{(pq)}\right)^2 - \mu_h$ for
  $1 \leq p < q \leq m$. Let
  \begin{equation}
    \label{eq:mtg-diffs}
    D^S_{nq} :=   \sum_{p
      = 1}^{q-1}\bar{U}^{(pq)}_h, 
    \quad
    D^T_{nq} := \sum_{p
      = 1}^{q-1} W_h^{(pq)},\quad \text{and}
    \quad
 D^Z_{nq} := \sum_{p
      = 1}^{q-1} U_h^{(pq)} 
  \end{equation}
  and set $D^S_{n1}= D^T_{n1} = D^Z_{n1} = 0$.  Writing
  $S_{nq}=\sum_{l=1}^q D^S_{nl}$, $T_{nq}=\sum_{l=1}^q D^T_{nl}$ and  $Z_{nq}=\sum_{l=1}^q D^Z_{nl}$, 
  we have that $S_h=S_{nm}$, $T_h=T_{nm}$ and $Z_h=Z_{nm}$.

  We claim that, for each $n$, the  sequences 
\begin{equation} \label{3mtg}
\left\{S_{nq},
    \mathcal{F}_{nq}, 1 \leq q \leq m\right\}, \quad \left\{T_{nq},
    \mathcal{F}_{nq}, 1 \leq q \leq m\right\} \quad \text{and} \quad \left\{Z_{nq},
    \mathcal{F}_{nq}, 1 \leq q \leq m\right\}
\end{equation}
are martingales,
  i.e., $\mathbb{E}_0\left[S_{nq}\middle|\mathcal{F}_{n , q-1}\right] =
  S_{n, q-1}$, $\mathbb{E}_0\left[T_{nq}\middle|\mathcal{F}_{n , q-1}\right] =
  T_{n, q-1}$ and $\mathbb{E}_0\left[Z_{nq}\middle|\mathcal{F}_{n , q-1}\right] =
  Z_{n, q-1}$ for $q=2,\dots,m$. Given the way $S_{nq}$, $T_{nq}$ and $Z_{nq}$ are defined
  as sums, it suffices to show that
  \begin{equation}
    \label{eq:martingale-prop}
    \mathbb{E}_0\left[\bar{U}^{(pq)}_h\middle| \mathcal{F}_{n,
      q-1} \right] = 
  \mathbb{E}_0\left[W_h^{(pq)}\middle| \mathcal{F}_{n,
      q-1} \right] = 
 \mathbb{E}_0\left[U_h^{(pq)}\middle| \mathcal{F}_{n,
      q-1} \right] = 0
  \end{equation}
  for all $1\le p< q\le m$.  Since
  $\mathbf{X}^{(1)}, \dots, \mathbf{X}^{(m)}$ are independent under
  $H_0$, conditioning on $\mathcal{F}_{n,q-1}$ is the same as
  conditioning on ${\bf X}^{(p)}$ alone in~(\ref{eq:martingale-prop}). As
  $\bar{U}^{(pq)}_h$, $W_h^{(pq)}$ and $U_h^{(pq)}$are all symmetric functions of
  the $n$ arguments ${\bf R}^{(pq)}_1, \dots, {\bf R}^{(pq)}_n$,
  (\ref{eq:martingale-prop}) follows from
  \lemref{general_properties}(i) and (ii).

  By the boundedness of our kernel $h$, each of the martingales in \eqref{3mtg} is trivially square-integrable. As such, the central limit theorem for martingale arrays from Corollary 3.1 in \cite{HeydeAndHall}
implies the assertion of \thmref{main} if we can show that the
  squares of the martingale differences $D^S_{nl}$, $D^T_{nl}$ and $D^Z_{nl}$ each
  satisfy the following two conditions.  The first condition requires that as $m, n \longrightarrow \infty$,
  \begin{equation}
     \label{cond11}
   \frac{n^2}{  m^2} \sum_{l = 2}^m \mathbb{E}_0\left[(D^S_{nl})^2\middle| \mathcal{F}_{n, l
        -1}\right]  ,  \quad  \frac{n^2}{  m^2} \sum_{l = 2}^m \mathbb{E}_0\left[(D^T_{nl})^2\middle| \mathcal{F}_{n, l
        -1}\right]  \quad \underset{p}{\longrightarrow } \quad   k^4 (\zeta_1^h)^2   ,
  \end{equation} 
  \begin{equation} \label{cond111}
   \frac{n}{  m^2} \sum_{l = 2}^m \mathbb{E}_0\left[(D^Z_{nl})^2\middle| \mathcal{F}_{n, l
        -1}\right]  \quad  \underset{p}{\longrightarrow } \quad   2^{-1}k^2 \zeta_1^h   ,
  \end{equation} 
 for $d =1 $, and  
\begin{align}
     \label{cond12}
    \frac{n^4}{m^2}\sum_{l = 2}^m \mathbb{E}_0\left[(D^S_{nl})^2\middle| \mathcal{F}_{n, l
        -1}\right]   \quad &\underset{p}{\longrightarrow } \quad  4 {k \choose 2}^4 \left\{ (\zeta^h_2)^2 + 6\eta^h  \right\}\;,\\
 \label{cond13}
\quad \frac{n^4}{m^2} \sum_{l = 2}^m\mathbb{E}_0\left[(D^T_{nl})^2\middle| \mathcal{F}_{n, l
        -1}\right]  \quad &\underset{p}{\longrightarrow }  \quad   4 {k \choose 2}^4 \left\{ (\zeta^h_2)^2 + 2\eta^h  \right\},\\
 \label{cond14}
\quad \frac{n^2}{m^2}\sum_{l = 2}^m \mathbb{E}_0\left[(D^Z_{nl})^2\middle| \mathcal{F}_{n, l
        -1}\right]  \quad &\underset{p}{\longrightarrow }  \quad    {k \choose 2}^2 \zeta^h_2,
  \end{align} 
for $d  =2$,
where the convergence symbol stands for convergence in probability.
  The second condition is a Lindeberg condition.  
  In Lemma~\ref{lem:ell2-convergence} in the Appendix~\ref{sec:PfSec4},
  we show that, in fact, \eqref{cond11}-\eqref{cond14}  also hold in the
  stronger sense of $L^2$ (or quadratic mean).
  Lemma~\ref{lem:lyapunov} proves a Lyapunov condition that implies
  the Lindeberg condition, which completes the proof of \thmref{main}.  
\end{proof}

\section{Aspects of power} \label{sec:power}

In order to investigate the power of our tests we adopt an
asymptotic minimax perspective. While our null distributional results
in \secref{main} are valid under the more general asymptotic regime
$m, n \longrightarrow \infty$, we treat here the
particular regime $\frac{m}{n} \longrightarrow \gamma \in (0, \infty)$.
Recall the definition in \eqref{pop_cor}, 
and let $\Theta=( \theta_h^{(pq)})_{ 1 \leq p < q \leq m}$ be the
$m \choose 2$-vector comprising all these pairwise measures of
association.  In our exploration of power, it is at times convenient
to have U-statistics with a kernel $h$ of degree 2.  For
instance, we apply results for U-statistics of degree 2 from
\cite{chen2016sup}.  Consequently, our power analysis focuses on the
two classes of statistics $S_h$ and $T_h$ for the kernel $h = h_\tau$
of Kendall's tau.
To indicate
this restriction, we write
$\theta^{(pq)}_\tau := \mathbb{E}[ h_\tau({\bf R}^{(pq)}_{\bf i}
)]$ for ${\bf i} \in \mathcal{P}(n, 2)$ and
$\Theta_\tau = (\theta^{(pq)}_\tau)_{1 \leq p <q \leq m} $.  

Let $\mathcal{D}_m$ be a family of continuous joint distributions on
$\mathbb{R}^m$ containing all $m$-variate Gaussian distributions, to
be considered as joint distributions for $(X^{(1)}, \dots,
X^{(m)})$. For a given significance level $\alpha \in (0, 1)$, we
study which sequences of lower bounds $\epsilon_n$ on the dependency
signal $\|\Theta_\tau\|_2$ allow tests to uniformly achieve a
fixed power $\beta > \alpha$ over the set of alternative
distributions
\begin{equation} \label{sq_alt}
\mathcal{D}_m(\| \Theta_\tau\|_2 \geq \epsilon_n) := \bigg\{D \in \mathcal{D}_m : \| \Theta_\tau\|_2 \geq \epsilon_n   \bigg\}.
\end{equation}
As usual, we take a test
$\phi$ to be a function mapping the data into the unit interval
$[0,1]$.
Given a test statistic $S = S({\bf X}_1, \dots, {\bf X}_n)$, we write
$\phi_\alpha (S)$ for the test that rejects for large values of $S$
and has (asymptotic) size $\alpha$.

The statistics $S_\tau$ and $T_\tau$ estimate the squared Euclidean
norm of the signal $\|\Theta_\tau\|_2^2$.  They are thus natural when the
interest is in detecting the alternatives in \eqref{sq_alt}.  The
following theorem gives a rough lower bound on the signal size
$\|\Theta_\tau\|_2$ that is needed for detectability.

\begin{theorem} \label{thm:beatMax}
Let $0 < \alpha < \beta < 1$. Under the asymptotic regime $m/n \longrightarrow \gamma \in (0, \infty)$, there exist constants $C_i = C_i(\alpha, \beta, \gamma) > 0$ for $i = 1, 2$,  such that
\begin{enumerate}
\item 
\[
\liminf_{n \longrightarrow \infty} 
\inf_{\mathcal{D}_m(\| \Theta_\tau\|_2 \geq \epsilon_n) }
\mathbb{E}[\phi_\alpha (S_\tau)] > \beta \quad \text{for} \quad \epsilon_n  = C_1 \sqrt{n}, \quad \text{and}
\]
\item
\[
\liminf_{n \longrightarrow \infty}
 \inf_{\mathcal{D}_m(\| \Theta_\tau\|_2 \geq \epsilon_n) }\mathbb{E}[\phi_\alpha (T_\tau)] > \beta \quad \text{for} \quad \epsilon_n  = C_2 \sqrt{n} .
\]
\end{enumerate} 
\end{theorem}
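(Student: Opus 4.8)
The plan is to prove both parts by a second-moment (Chebyshev) argument: under any alternative $D\in\mathcal{D}_m(\|\Theta_\tau\|_2\ge\epsilon_n)$ I show that the statistic has a mean far exceeding its null critical value, while its variance is small relative to the square of that mean, so the rejection probability can be pushed above $\beta$ by taking $C_i$ large. Here $k=2$, $d=1$ and $\zeta_1^{h_\tau}=1/9$. By \thmref{main}, the size-$\alpha$ tests reject when $S_\tau$ (resp.\ $T_\tau$) exceeds a null critical value $c_\alpha = z_\alpha\,k^2 m\zeta_1^{h_\tau}/n\asymp m/n\to\gamma$, which is of constant order $O(1)$ in the regime $m/n\to\gamma$. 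I would first record the alternative means. Since $T_\tau$ unbiasedly estimates \eqref{kernel_signal}, $\mathbb{E}[T_\tau]=\|\Theta_\tau\|_2^2\ge\epsilon_n^2=C_2^2 n$. For $S_\tau$ from \eqref{centered_sum_of_sq_U} one has $\mathbb{E}[S_\tau]=\|\Theta_\tau\|_2^2+\sum_{p<q}\big(\mathrm{Var}[\tau^{(pq)}]-\mu_\tau\big)$, and since each of $\mathrm{Var}[\tau^{(pq)}]$ and $\mu_\tau$ is $O(1/n)$ (\lemref{moment_lem}), the bias is $O(n)$ and is dominated by $\|\Theta_\tau\|_2^2\ge C_1^2 n$ once $C_1$ is large, giving $\mathbb{E}[S_\tau]\ge c\,C_1^2 n$ for a fixed $c>0$. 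In both cases the alternative mean is of order $n$, dwarfing the $O(1)$ critical value.

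\textbf{Variance.} The crux is a uniform upper bound on the alternative variance. I would view $T_\tau=\binom{n}{2k}^{-1}\sum_{\mathbf{i}\in\mathcal{P}(n,2k)}\sum_{p<q}h^W(\mathbf{R}^{(pq)}_{\mathbf{i}})$ as a single U-statistic of degree $2k=4$ in the super-observations $\mathbf{X}_1,\dots,\mathbf{X}_n$, and use the Hoeffding decomposition
\[
\mathrm{Var}[T_\tau]=\binom{n}{2k}^{-1}\sum_{c=1}^{2k}\binom{2k}{c}\binom{n-2k}{2k-c}\,\zeta_c^{H},
\]
where $\zeta_c^H$ is the covariance of the combined kernel on two $2k$-tuples sharing $c$ indices. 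The structural fact that drives everything is that, because $h^W$ from \eqref{hw_kern} unbiasedly estimates $(\theta_\tau^{(pq)})^2$, its degree-one (H\'ajek) projection is proportional to $\theta_\tau^{(pq)}g_1^{(pq)}$, where $g_1^{(pq)}(\mathbf{x})=\mathbb{E}[h_\tau(\mathbf{x},\mathbf{X}^{(pq)}_2)]-\theta_\tau^{(pq)}$ is the centered projection of $h_\tau$ itself. Consequently the leading $c=1$ term is governed by
\[
\mathrm{Var}\Big[\textstyle\sum_{p<q}\theta_\tau^{(pq)}g_1^{(pq)}\Big]=\Theta_\tau^{\top}M\,\Theta_\tau\le\lambda_{\max}(M)\,\|\Theta_\tau\|_2^2,
\]
with $M=\big(\mathrm{Cov}[g_1^{(pq)},g_1^{(rs)}]\big)_{p<q,\,r<s}$ the Gram matrix of the pairwise projections.

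\textbf{Conclusion.} Boundedness of $h_\tau$ makes each $g_1^{(pq)}$ bounded, so $\lambda_{\max}(M)\le\mathrm{tr}(M)\lesssim\binom{m}{2}\asymp n^2$; together with the $c=1$ coefficient of order $1/n$ this bounds the leading variance term by $O\big(\|\Theta_\tau\|_2^2\,\lambda_{\max}(M)/n\big)=O(C_2^2 n^2)$. The terms $c\ge2$ split into the null-type variance, which is $O(1)$ by \thmref{main}, and signal cross-terms that the same spectral bound controls at the smaller order $O(C_2^2 n)$, using the degree-2 U-statistic estimates of \cite{chen2016sup} to handle the projections of the product kernel $h_\tau\otimes h_\tau$ uniformly over $\mathcal{D}_m$. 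Hence $\mathrm{Var}[T_\tau]\lesssim C_2^2 n^2$, while $(\mathbb{E}[T_\tau]-c_\alpha)^2\gtrsim C_2^4 n^2$, so Chebyshev's inequality gives $\mathbb{E}[\phi_\alpha(T_\tau)]=P(T_\tau>c_\alpha)\ge 1-O(C_2^{-2})>\beta$ uniformly once $C_2$ is large. The statistic $S_\tau$ is treated identically: $(\tau^{(pq)})^2$ has the same $\theta_\tau^{(pq)}g_1^{(pq)}$ H\'ajek projection as $h^W$, so the same bound yields $\mathrm{Var}[S_\tau]\lesssim C_1^2 n^2$ and the analogous conclusion for part (i).

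\textbf{Main obstacle.} The hardest part is the uniform variance control of the preceding paragraph: the family $\mathcal{D}_m$ permits arbitrary dependence among $X^{(1)},\dots,X^{(m)}$, so the projections $g_1^{(pq)}$ for index-sharing and for disjoint pairs need not be uncorrelated, and one must bound every $\zeta_c^H$ (and the corresponding quantities for $S_\tau$, including its bias) without any structural assumption beyond boundedness. The realizations that make this tractable are that (a) the crude trace bound $\lambda_{\max}(M)\le\mathrm{tr}(M)\asymp n^2$ already suffices, since the extra factor $1/n$ from the H\'ajek term and the quadratic-in-$C_i$ growth of $\|\Theta_\tau\|_2^2$ drive the variance-to-mean-squared ratio down to $O(C_i^{-2})$, and (b) the degree-2 U-statistic machinery of \cite{chen2016sup} supplies the uniform moment control needed for the higher-order terms. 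Verifying that these higher-order contributions are genuinely of lower order, and pinning down the bias of $S_\tau$, is where the real computational work lies.
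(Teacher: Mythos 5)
Your part (ii) argument is essentially the paper's own proof. The paper also applies Chebyshev's inequality with $\mathbb{E}[T_\tau]=\|\Theta_\tau\|_2^2$ against the $O(m/n)=O(1)$ critical value, expands $\mathrm{Var}(T_\tau)$ via the Hoeffding formula for the degree-4 kernel $h^T_\tau=\sum_{p<q}h^W_\tau$, and controls the crucial $c=1$ term through exactly the structure you identify (the degree-one projection of $h^W_\tau$ is proportional to $\theta_\tau^{(pq)}$ times the projection of $h_\tau$), arriving at $\zeta_1^{h^T_\tau}\le 2\binom{m}{2}\|\Theta_\tau\|_2^2$ via the top eigenvalue of the all-ones matrix $\mathbf{J}_{\binom{m}{2}}$; your Gram-matrix bound $\Theta_\tau^{\top}M\Theta_\tau\le\mathrm{tr}(M)\,\|\Theta_\tau\|_2^2$ is the same estimate (and, since $M\succeq 0$, arguably a cleaner justification of it). Two of your intermediate claims are wrong, however, though neither is fatal. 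First, the $c\ge2$ terms are not ``null-type variance $O(1)$'' plus cross-terms of order $O(C_2^2 n)$: \thmref{main} is a null result and says nothing under alternatives, and since $\mathcal{D}_m$ permits arbitrary dependence across variable pairs, $\zeta_c^{h^T_\tau}$ can be as large as order $m^4$. The correct (and sufficient) treatment is the paper's crude bound $\zeta_c^{h^T_\tau}\le 2\binom{m}{2}^2$, whose contribution $n^{-c}\cdot O(m^4)=O(n^2)$ is of the \emph{same} order in $n$ as the leading term, and is still negligible against the $C_2^4n^2$ denominator. Second, \cite{chen2016sup} is a Gaussian-approximation result for maxima; it provides no moment control and is not needed here (the paper invokes it only for \thmref{HLnotWork}) --- boundedness of $h_\tau$ does all the work.

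For part (i) you genuinely depart from the paper, and here your proposal has a concrete gap. The paper never computes the mean or variance of $S_\tau$ under alternatives: it rewrites the rejection event in terms of $\|\mathbf{U}_\tau\|_2$, applies the triangle inequality $\|\mathbf{U}_\tau\|_2\ge\|\Theta_\tau\|_2-\|\mathbf{U}_\tau-\Theta_\tau\|_2$, and bounds $P\left(\|\mathbf{U}_\tau-\Theta_\tau\|_2>C\sqrt{n}\right)$ by an exponential concentration inequality for Hilbert-space-valued U-statistics \citep[Corollary 8.1.7]{UstatBanach}, which disposes of bias and all cross-terms in one stroke. Your Chebyshev treatment handles the bias of $S_\tau$ correctly, but the assertion that the variance is ``treated identically'' to $T_\tau$ skips a real step: $S_\tau$ is not a U-statistic, because $\sum_{p<q}(\tau^{(pq)})^2$ contains products $h_\tau(\mathbf{X}^{(pq)}_{\mathbf{i}})h_\tau(\mathbf{X}^{(pq)}_{\mathbf{j}})$ over \emph{overlapping} index pairs, so the Hoeffding variance decomposition you rely on does not apply directly. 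To make your route rigorous you would need to split $(\tau^{(pq)})^2$ according to $|\mathbf{i}\cap\mathbf{j}|\in\{0,1,2\}$ into U-statistics of degrees $4$, $3$ and $2$, observe that the latter two carry coefficients $O(n^{-1})$ and $O(n^{-2})$, and bound their aggregated variance over the $\binom{m}{2}$ pairs (a Cauchy--Schwarz argument gives $O(n)$, which suffices). This is routine but absent from your proposal, and avoiding it is precisely what the paper's concentration-inequality route buys.
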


Our proof of \thmref{beatMax} uses rather general concentration
bounds and it should be possible to sharpen the analysis to show
asymptotic power for $\phi_\alpha{ (S_\tau)}$ and
$\phi_\alpha{ (T_\tau)}$ under smaller signal strength.
Indeed, 
we {conjecture} that a
test based on $T_\tau$ can asymptotically attain uniform power $\beta$
when the signal size $\|\Theta_\tau\|_2$ is of constant order
$O(1)$. This conjecture is partially supported by
\thmref{conjectureThm} below.

\subsection{Rate-optimality under equicorrelation} \label{equi}

When the joint distribution of $X^{(1)}, \dots, X^{(m)}$ is a regular
Gaussian distribution, then $H_0$ is equivalent to $R - I_m = 0$,
where $I_m$ is the $m$-by-$m$ identity
matrix; recall that $R$ is the population Pearson correlation matrix.  For any $\epsilon>0$, define the alternative
\begin{equation} \label{GaussAlt}
\mathcal{N}_m(\| R - I_m\|_F \geq \epsilon)
\end{equation}
as the family of regular $m$-variate Gaussian distributions whose
correlation matrix $R$ satisfies $\| R - I_m\|_F \geq \epsilon$.  Fix
any $\alpha,\beta\in(0,1)$ with $\alpha < \beta$.  A result of
\citet*[Remark 1(a)]{CaiAndMa} implies that in the regime
$m/n \longrightarrow \gamma$, 
there exists a sufficiently small constant
$c = c(\alpha, \beta, \gamma) > 0$ such that
\[
\limsup_{n \rightarrow \infty} \inf_{\mathcal{N}_m(\| R - I_m\|_F \geq c) } \mathbb{E} [\phi] < \beta
\]
for any $\alpha$-level test $\phi$. In other words,
asymptotically, no $\alpha$-level test can uniformly achieve the
desired power against the alternative \eqref{GaussAlt} when the signal
size $\|R - I_m\|_F$ is allowed to be as small as $c$.  It follows
immediately that in our nonparametric setup there also
exists a constant $\tilde{c}=\tilde{c}(\alpha, \beta, \gamma) > 0$
such that
\[
\limsup_{n \rightarrow \infty} \inf_{\mathcal{D}_m(\|\Theta_\tau\|_2 > \tilde{c})} \mathbb{E}[\phi] < \beta
\]
for any $\alpha$-level test $\phi$.  This is true because the
nonparametric class $\mathcal{D}_m$ contains all $m$-variate Gaussian
distributions, and because $\theta_\tau^{(pq)} \asymp \rho^{(pq)}$
when $X^{(p)}$ and $X^{(q)}$ are jointly Gaussian.  The latter fact
follows from
$\rho^{(pq)} = \sin\big(\frac{\pi}{2}\theta_\tau^{(pq)}\big)$ for non-degenerate elliptical distributions; see \cite{ellipKendall}.

Given the observation just made, an $\alpha$-level test $\phi$
that satisfies
\begin{equation} \label{rate_opt}
\liminf_{n \rightarrow \infty}  \inf_{\mathcal{D}_m(\|\Theta_\tau\|_2 \geq \tilde{C})}\mathbb{E}[\phi] > \beta
\end{equation}
for a large enough constant
$\tilde{C}=\tilde{C}(\alpha, \beta, \gamma) > 0$ would be
rate-optimal.  
If the signal $\|\Theta_\tau\|_2$ is large,  being an unbiased estimator of  $\|\Theta_\tau\|_2^2$ our statistic $T_\tau$ always centers around the same large value regardless of the true underlying distribution of ${\bf X}$. It is hence
natural to \emph{conjecture} that the optimality condition
\eqref{rate_opt} is satisfied by the test $\phi_\alpha(T_\tau)$, for a
reasonable class of elliptical distributions $\mathcal{D}_m$ that extends
beyond the Gaussians. 
 Our
next result supports the 
conjecture.

Let $\mathcal{N}_m^{\text{equi}}(\|\Theta_\tau\|_2 \geq \tilde{C})$ be
the set of $m$-variate Gaussian distributions that have all pairwise
(Pearson and thus also Kendall) correlations equal to a common value
such that $\|\Theta_\tau\|_2 \geq \tilde{C}$.  If
$\theta^{(pq)}_\tau = \theta$ for all $1 \leq p \not = q \leq m$, then
$\|\Theta_\tau\|_2^2 =\theta^2{m \choose 2}$.

\begin{theorem} \label{thm:conjectureThm}
As $\frac{m}{n} \longrightarrow \gamma$, there exists a constant $\tilde{C} = \tilde{C}(\alpha, \beta, \gamma)>0$ such that 
\[
\liminf_{n \longrightarrow \infty}
\inf_{\mathcal{N}_m^{\text{equi}}(\|\Theta_\tau\|_2 \geq
  \tilde{C})}\mathbb{E}[\phi_\alpha(T_\tau)] > \beta.
\]
\end{theorem}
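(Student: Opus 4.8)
The plan is to turn the \emph{unbiasedness} of $T_\tau$ into power via a variance bound and a one-sided Chebyshev (Cantelli) inequality. Write $W^{(pq)}:=W^{(pq)}_{h_\tau}$. Since $k=2$ and $\zeta_1^{h_\tau}=1/9$ for Kendall's tau, \thmref{main} gives, under $H_0$, $\tfrac{9n}{4m}T_\tau\Rightarrow\mathcal N(0,1)$, so the size-$\alpha$ test rejects when $T_\tau>t_{n,\alpha}$ with $t_{n,\alpha}=\tfrac{4m}{9n}z_\alpha+o(1)\to\tfrac{4\gamma}{9}z_\alpha$, a finite constant. On the other hand $T_\tau$ is unbiased for $\|\Theta_\tau\|_2^2$, so on $\mathcal N_m^{\mathrm{equi}}(\|\Theta_\tau\|_2\ge\tilde C)$ we have $\mathbb E[T_\tau]=\binom m2\theta^2=\|\Theta_\tau\|_2^2=:M\ge\tilde C^2$. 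Choosing $\tilde C$ so large that $M\ge\tilde C^2\ge 2t_{n,\alpha}$ for all large $n$, Cantelli yields $\mathbb E[\phi_\alpha(T_\tau)]=P(T_\tau>t_{n,\alpha})\ge 1-\mathrm{Var}[T_\tau]/(M-t_{n,\alpha})^2\ge 1-4\,\mathrm{Var}[T_\tau]/M^2$. It therefore suffices to prove $\mathrm{Var}[T_\tau]/M^2\le C'/\tilde C^{4}+o_n(1)$ uniformly over the alternative set; then $\tilde C$ can be taken large enough that the bound exceeds $\beta$.

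The variance is the crux, and I would evaluate it by grouping the pairs according to overlap: in $\mathrm{Var}[T_\tau]=\sum_{\{p,q\}}\sum_{\{r,s\}}\mathrm{Cov}[W^{(pq)},W^{(rs)}]$ the terms split into equal pairs ($\binom m2$ of them), pairs sharing one index ($\asymp m^3$), and disjoint pairs ($\asymp m^4$). Under $H_0$ only the diagonal survives by \lemref{general_properties}(iv), but under the alternative all variables are dependent. To decouple the off-diagonal terms I would use the one-factor representation $X^{(j)}_i=\sqrt{\rho}\,Z_{0,i}+\sqrt{1-\rho}\,Z_{j,i}$ of the equicorrelation Gaussian, where $Z_{0,i}$ and the $Z_{j,i}$ are independent standard normals and $\rho\asymp\theta$. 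Conditionally on $\mathbf{Z}_0=(Z_{0,1},\dots,Z_{0,n})$ the columns are independent, so disjoint $W^{(pq)},W^{(rs)}$ are conditionally independent, and exchangeability forces their conditional means to coincide, whence $\mathrm{Cov}[W^{(pq)},W^{(rs)}]=\mathrm{Var}\big[\mathbb E(W^{(pq)}\mid\mathbf{Z}_0)\big]$. For one-overlap pairs, conditioning on the shared column as well gives $\mathrm{Cov}[W^{(pq)},W^{(ps)}]=\mathrm{Var}\big[\mathbb E(W^{(pq)}\mid\mathbf{Z}_0,\mathbf{X}^{(p)})\big]$.

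The quantitative heart is that each conditional variance carries surplus powers of the small signal $\theta$. Because the two Kendall factors of $h^W$ act on disjoint rows, $\mathbb E(W^{(pq)}\mid\mathbf{Z}_0)$ is itself the product ($W$-type) U-statistic built from the degree-$2$ kernel $\varphi(z,z')=\big(2\Phi(\sqrt{\rho}(z-z')/\sigma)-1\big)^2$ in the latent variables (with $\sigma^2=2(1-\rho)$), and $\mathbb E(W^{(pq)}\mid\mathbf{Z}_0,\mathbf{X}^{(p)})$ is the analogous statistic built from $\kappa=\operatorname{sgn}(X^{(p)}_i-X^{(p)}_j)\big(2\Phi(\sqrt{\rho}(Z_{0,i}-Z_{0,j})/\sigma)-1\big)$. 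Both kernels have mean $\theta$, but $\mathrm{Var}[\varphi]\asymp\theta^2$ while $\mathrm{Var}[\kappa]\asymp\theta$; carrying out the Hoeffding decomposition of the product construction (as in the proof of \lemref{degen_relate}, now for \emph{non-degenerate} base kernels) gives first-projection variances $\zeta_1\asymp\theta^2\,\mathrm{Var}[\varphi_1]\asymp\theta^4$ and $\zeta_1\asymp\theta^2\,\mathrm{Var}[\kappa_1]\lesssim\theta^{3}$, hence $\mathrm{Var}[\mathbb E(W^{(pq)}\mid\mathbf{Z}_0)]\asymp\theta^4/n$ and $\mathrm{Var}[\mathbb E(W^{(pq)}\mid\mathbf{Z}_0,\mathbf{X}^{(p)})]\lesssim\theta^3/n$. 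Multiplying by the counts and dividing by $M^2\asymp m^4\theta^4$: the disjoint class contributes $\asymp m^4(\theta^4/n)/M^2\asymp 1/n$; the one-overlap class contributes $\lesssim m^3(\theta^3/n)/M^2\asymp 1/(nm\theta)\to 0$ (using $\theta\ge\tilde C/m$ on the alternative); and the diagonal contributes the null-order term $\binom m2\mathrm{Var}[W^{(pq)}]/M^2\asymp(1+M/n)/M^2\lesssim 1/\tilde C^4+o_n(1)$. Summing gives $\mathrm{Var}[T_\tau]/M^2\le C'/\tilde C^4+o_n(1)$, and since every bound depends on the alternative only through $M\ge\tilde C^2$ and the range of $\theta$, the estimate is uniform.

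The main obstacle I anticipate is precisely the one-overlap class: the crude bound $\mathrm{Var}[\mathbb E(W^{(pq)}\mid\mathbf{Z}_0,\mathbf{X}^{(p)})]\le\mathrm{Var}[W^{(pq)}]\asymp\theta^2/n+1/n^2$ is \emph{not} good enough, since its degenerate $1/n^2$ piece, multiplied by the $m^3$ count and divided by $M^2$, would blow up like $n/\tilde C^4$. The resolution is to show that this $O(1/n^2)$ contribution does not survive the extra conditioning: in the product construction for $\kappa$ the second-order projection variance $\zeta_2$ acquires a coefficient of order $\theta^2$ rather than $O(1)$, so its $n^{-2}$ term is only $O(\theta^2/n^2)$ and is dominated by $\theta^3/n$ on the alternative. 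Establishing this refined $\zeta_2\lesssim\theta^2$ bound through a careful Hoeffding expansion of $\kappa$'s product construction is where the real work lies.
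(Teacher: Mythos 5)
Your proposal is correct in outline, but it reaches the variance bound by a genuinely different route than the paper. The paper also starts from Chebyshev applied to $T_\tau - \|\Theta_\tau\|_2^2$ (your Cantelli step is the one-sided version of the same reduction), but it then views $T_\tau$ as a single degree-$4$ U-statistic with kernel $h^T_\tau=\sum_{p<q}h^W_\tau$, invokes Serfling's variance formula $\mathrm{Var}(T_\tau)={n \choose 4}^{-1}\sum_{c=1}^4{4\choose c}{n-4\choose 4-c}\zeta_c^{h^T_\tau}$ (decomposing by shared \emph{rows} first), and only inside each $\zeta_c^{h^T_\tau}$ splits the double sum over variable pairs into overlapping and disjoint cases. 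The factors of $\theta$ needed to kill the disjoint-pair terms come from a soft argument, \lemref{Taylor}: the map $\rho\mapsto\mathbb{E}[H({\bf X})]$ is differentiable along the curved exponential family $N(0,\Sigma(\rho))$, so any bounded functional vanishing at $\rho=0$ is $O(\rho)$. You instead decompose by variable-pair (\emph{column}) overlap first, and replace the differentiability lemma by the one-factor representation $X^{(j)}_i=\sqrt{\rho}\,Z_{0,i}+\sqrt{1-\rho}\,Z_{j,i}$: conditional independence given ${\bf Z}_0$ (resp.\ given $({\bf Z}_0,{\bf X}^{(p)})$) turns the disjoint and one-overlap covariances into variances of explicit latent U-statistics built from $\varphi$ and $\kappa$, whose Hoeffding projections carry the surplus powers of $\theta$. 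Your route yields sharper and more interpretable bounds ($\theta^4/n$ and $\theta^3/n$ rather than the paper's first-order $O(\theta)$ gains per expectation), at the price of being wedded to the equicorrelated Gaussian structure; the paper's \lemref{Taylor} is blunter but applies to any block-equicorrelation functional and avoids all explicit kernel computation.

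Two details need patching, both minor. First, the one-factor representation requires $\rho\ge 0$, whereas $\mathcal{N}_m^{\text{equi}}$ admits common correlations down to $-\tfrac{1}{m-1}$; dispose of these exactly as the paper does, noting positive definiteness forces $\theta>-\tfrac{2}{\pi}\arcsin\bigl(\tfrac{1}{m-1}\bigr)$, so negative-$\theta$ alternatives have $\|\Theta_\tau\|_2<1/\sqrt{2}$ and are excluded once $\tilde C>1/\sqrt 2$. (Relatedly, your expansions $\mathrm{Var}[\varphi]\asymp\theta^2$, $\mathrm{Var}[\kappa]\asymp\theta$ are local near $\rho=0$; for $\rho$ bounded away from $0$ the trivial bounds $\varphi,\kappa^2\le 1$ suffice since then $\theta\asymp 1$, or one can restrict to $\theta\le 1-\epsilon$ and cite \thmref{beatMax}(ii) as the paper does.) Second, the step you flag as ``the real work''---the refined $\zeta_2\lesssim\theta^2$ bound for the $\kappa$-construction---is in fact immediate: since $\kappa^2=\varphi$ pointwise and $\kappa$'s evaluated at disjoint row pairs are independent, Cauchy--Schwarz gives, for $|{\bf i}\cap{\bf j}|=2$,
\[
\bigl|\mathbb{E}\bigl[\kappa^W({\bf i})\,\kappa^W({\bf j})\bigr]\bigr|
\le \Bigl(\mathbb{E}\bigl[\kappa^2\bigr]\Bigr)^{2}
=\bigl(\mathbb{E}[\varphi]\bigr)^{2}=\theta^2,
\]
and the same argument bounds $\zeta_3,\zeta_4$ as well, so no delicate Hoeffding expansion of $\kappa$'s product construction is needed. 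With these repairs your accounting---disjoint class $O(1/n)$, one-overlap class $O(1/(n\tilde C))$, diagonal $O(1/\tilde C^4)+o_n(1)$---is uniform over the alternative and closes the proof.
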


The theorem is proven in Appendix~\ref{sec:proofs-sec5}.  
Our simulation experiments on power in Section~\ref{sec:simulations}
corroborate the conjecture made above.

\subsection{Comparison with the ``max'' statistic}

The work of \cite{HanLiu2014} considered testing the
independence hypothesis $H_0$ from~(\ref{eq:H0}) using maxima of
rank correlations and, in particular, the statistic 
\begin{equation} \label{max_U}
S^{\max}_{\tau} := \max_{ 1\leq p < q \leq m} | \tau^{(pq)}|
\end{equation}
that is based on Kendall's tau.  
\cite{HanLiu2014} derived the
asymptotic null distribution under the regime $\log m = o(n^{1/3})$.
Let $\phi_\alpha(S^{\max}_\tau)$ be the level $\alpha$ test that
rejects for large values of $S^{\max}_\tau$. Naturally, this test is
powerful against alternatives belonging to the set
\begin{equation} \label{max_alter}
\mathcal{D}_m(\| \Theta_\tau \|_\infty \geq \epsilon_n) := \left\{ D \in \mathcal{D}_m : \|\Theta_\tau\|_\infty \geq \epsilon_n\right\},
\end{equation}
which is characterized by the max norm of $\Theta_\tau$.  Indeed, when
$\log m = o(n^{1/3})$, for a given significance level $\alpha$ and
targeted power $\beta \in (\alpha, 1)$, it was shown that there exists
a constant $c_1 = c_1(\alpha, \beta)$ such that
\[
\liminf_{n \longrightarrow \infty} \inf_{\mathcal{D}_m(\| \Theta_\tau \|_\infty \geq  c_1 \sqrt{(\log m) /n})  }  \mathbb{E} [\phi_\alpha(S^{\max}_\tau) ] >\beta.
\]
\cite{HanLiu2014} also showed
rate-optimality of this test, i.e., there exists a constant
$c_2 = c_2(\alpha, \beta) < c_1$ such that for \emph{any}
$\alpha$-level test $\phi$,
\begin{equation} \label{lbddmax}
 \limsup_{n \longrightarrow \infty}\inf_{
\mathcal{D}_m(\| \Theta_\tau \|_\infty \geq c_2 \sqrt{(\log m) /n})
}\mathbb{E}[\phi] < \beta. 
\end{equation}
Note that in the regime $m/n \longrightarrow \gamma$ that we consider in
this section we have $\log m = o(n^{1/3})$.

While a test based on $S_\tau^{\max}$ is rate-optimal in detecting
alternatives of the form \eqref{max_alter} characterized by the max
norm signal, it is---as intuition suggests---not powerful in detecting
alternatives with small but non-zero dependence among many pairs of
random variables.  The latter scenario is best described via the
Euclidean norm as in \eqref{sq_alt}.  This is demonstrated by the
following theorem about equicorrelation alternatives; recall the
positive result in Theorem~\ref{thm:conjectureThm}.

\begin{theorem} \label{thm:HLnotWork} As
  $\frac{m}{n} \longrightarrow \gamma$, there does not exist any
  constant $C = C(\alpha, \beta, \gamma)>0$ such that
\[
\liminf_{n \longrightarrow \infty}
\inf_{\mathcal{N}_m^{\text{equi}}(\|\Theta_\tau \|_2 \geq
  C)}\mathbb{E}[\phi_\alpha(S_\tau^{\max})] > \beta.
\]
\end{theorem}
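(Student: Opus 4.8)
The plan is to refute the claim by exhibiting, for \emph{every} constant $C>0$, a member of the family $\mathcal{N}_m^{\text{equi}}(\|\Theta_\tau\|_2 \geq C)$ against which $\phi_\alpha(S_\tau^{\max})$ has asymptotic power at most $\alpha<\beta$. Fix $C$ and take the boundary distribution, i.e.\ the equicorrelated Gaussian with common Kendall signal $\theta^{(pq)}_\tau = \theta_n$ for all $p<q$ chosen so that $\|\Theta_\tau\|_2 = \theta_n\sqrt{\binom{m}{2}} = C$. Then $\theta_n = C/\sqrt{\binom{m}{2}} \asymp 1/n$ in the regime $m/n \longrightarrow \gamma$, and the corresponding Pearson correlation is $\rho_n = \sin(\tfrac{\pi}{2}\theta_n)\asymp 1/n$. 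Since this distribution lies in the family, the infimum of the power over the family is at most its power there, so it suffices to show that $\mathbb{E}_{\theta_n}[\phi_\alpha(S_\tau^{\max})]\longrightarrow \alpha$.

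The governing heuristic is that the per-pair signal $\theta_n$ is far below what the maximum can resolve. Under this alternative each $\tau^{(pq)}$ has mean $\theta_n \asymp 1/n$ and standard deviation of order $n^{-1/2}$, so after standardization the common mean shift is $\delta_n \asymp n^{-1/2}$ while the relevant rejection threshold sits at $u_n \asymp \sqrt{\log m}$. The extreme-value fluctuations of the standardized maximum occur at scale $(\log m)^{-1/2}$, and even relative to this finer scale the shift is negligible, because $\delta_n/(\log m)^{-1/2} = \sqrt{(\log m)/n}\longrightarrow 0$ (note $\log m \asymp \log n$ here). Hence the mean shift cannot move the limiting law of the centered, scaled maximum.

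The main step is to upgrade this heuristic to the statement that, under the boundary alternative, the standardized $S_\tau^{\max}$ converges to the \emph{same} Gumbel limit that \cite{HanLiu2014} derive under $H_0$. I would do this by re-examining their Chen--Stein Poisson approximation: the limit is determined by the marginal exceedance probabilities $P[\,|\tau^{(pq)}|>t_n\,]$ at the threshold $t_n \asymp \sqrt{(\log m)/n}$ together with the joint-exceedance (clustering) terms. A mean shift of order $1/n$ multiplies each marginal exceedance probability by a factor $\exp\!\big(O(u_n\delta_n)\big)=\exp\!\big(O(\sqrt{(\log m)/n})\big)\longrightarrow 1$, while the clustering terms remain negligible; thus the Poisson parameter, and with it the Gumbel limit, is unchanged. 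It follows that the rejection probability of the level-$\alpha$ test converges to $\alpha$ under the alternative, whence $\inf_{\mathcal{N}_m^{\text{equi}}(\|\Theta_\tau\|_2\geq C)}\mathbb{E}[\phi_\alpha(S_\tau^{\max})]\leq \alpha+o(1)$. Taking $n\to\infty$ gives a liminf at most $\alpha<\beta$, and since $C$ was arbitrary no admissible constant exists.

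The hard part will be the rigorous transfer of the extreme-value limit from $H_0$ to the perturbed alternative. One must control not only the shifted marginal tails but also the dependence among the $\binom{m}{2}$ Kendall statistics -- in particular overlapping pairs that share a coordinate -- and check that the covariance/clustering bounds underlying the Poisson approximation of \cite{HanLiu2014} survive the $O(1/n)$ perturbation of the joint law under equicorrelation. A possibly cleaner alternative is to construct a coupling of the equicorrelated sample with an independent (null) sample under which the two maxima differ by $o_P\big((n\log m)^{-1/2}\big)$, i.e.\ by less than the Gumbel fluctuation scale; this would let me invoke the null limit of \cite{HanLiu2014} as a black box and conclude power $\to\alpha$ directly.
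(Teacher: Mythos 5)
Your overall strategy coincides with the paper's: argue by contradiction, plug in the boundary equicorrelated Gaussian with common Kendall signal $\theta_n \asymp 1/n$ (so $\|\Theta_\tau\|_2\asymp C$), and show that the limit law of $\sqrt{n}\,S^{\max}_\tau$ under this alternative is the same continuous (Gumbel-type) law as under $H_0$, whence the power of $\phi_\alpha(S^{\max}_\tau)$ tends to $\alpha<\beta$. The gap sits in the single step that carries all the difficulty: transferring the extreme-value limit of \citet{HanLiu2014} from $H_0$ to the alternative. Your plan is to re-run their Chen--Stein/Poisson approximation and assert that (a) the marginal exceedance probabilities change by a factor $1+o(1)$ and (b) ``the clustering terms remain negligible.'' Point (a) is a correct heuristic, but point (b) is not a small perturbation of the null computation. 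Under the equicorrelated alternative the common-factor strength is $(m-1)\rho_n \asymp m/n = O(1)$, so \emph{all} $\binom{m}{2}$ statistics $\tau^{(pq)}$ become mutually dependent, whereas the Han--Liu argument rests on the exact independence of taus for disjoint pairs under $H_0$ (\lemref{general_properties}$(iv)$): the dependency structure jumps from $O(m^3)$ overlapping pairs to the complete graph on $\binom{m}{2}$ vertices, and the long-range dependence terms in the Chen--Stein bound, which vanish identically under $H_0$, must be controlled from scratch. Note also that this alternative is \emph{not} contiguous to the null --- the Kullback--Leibler divergence between the two $n$-fold product laws diverges (this is precisely why $S_\tau$ and $T_\tau$ do have power against it, cf.\ \thmref{conjectureThm}) --- so your fallback coupling cannot be of total-variation type; a coupling targeted only at the two maxima would itself amount to proving the distributional transfer in question, and is not provided.

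For comparison, the paper never touches the Poisson approximation under the alternative. It applies the Gaussian approximation for maxima of high-dimensional U-statistics of \citet[Theorem 2.1]{chen2016sup} under \emph{both} the null and the alternative, reducing the problem to comparing the maxima of two Gaussian vectors whose covariance matrices $\Gamma^0$ and $\Gamma$ are the H\'{a}jek-projection covariances. \lemref{Taylor} gives the entrywise perturbation bound $|\Gamma_{(p,q),(p',q')}-\Gamma^0_{(p,q),(p',q')}| = O(n^{-1})$, and the Gaussian comparison lemma of \citet[Lemma 3.1]{MR3161448} then forces the two limiting distributions of the maximum to coincide; the result of \citet{HanLiu2014} is used only to identify that common limit as a continuous distribution function $H$, which makes the $O(n^{-1/2})$ recentering negligible. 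To complete your proof you would either have to carry out the extreme-value analysis under the full dependence created by the common factor (the hard part your sketch elides), or adopt machinery of this kind that is insensitive to which of the two laws generated the data.
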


The proof of the theorem is deferred to
Appendix~\ref{sec:proofs-sec5}.  It relies on a comparison lemma of
\citet{MR3161448} and a recent result on Gaussian approximation for
high-dimensional U-statistics in
\cite{chen2016sup}. \thmref{HLnotWork} says that a signal size
$\|\Theta_\tau\|_2$ of constant order is not enough to guarantee a
preset asymptotic power for a test based on $S^{\max}_\tau$
under the regime $\frac{m}{n} \longrightarrow \gamma$.  We demonstrate
this in our simulations in the next section.

\section{Implementation and simulation experiments}
\label{sec:simulations}

We now compare several tests of the independence hypothesis $H_0$
based on specific versions of the statistics introduced in this paper.
Our simulations first explore the size of the tests when critical
values are set using asymptotic normal approximations.  We then compare their power.  Before turning to the
simulations, however, we discuss the computation of the 
test statistics.

\subsection{Implementation}

In order to compute the statistics $S_h$ from
\eqref{centered_sum_of_sq_U} and $Z_h$ from \eqref{simple_sum_of_U}
for $m$ variables, one has to make $m \choose 2$ evaluations of the
U-statistics $U^{(pq)}_h$.  In general, for a U-statistic of degree
$k$, a na\"{i}ve calculation following the definition
in~\eqref{U_stat} requires $O(n^k)$ operations.  Fortunately, more
efficient algorithms are available for the specific examples covered here.  For instance, 
Spearman's $\rho_s^{(pq)}$ from
Example~\ref{ex:rho_ex} can be computed in $O(n \log n)$ operations.
The same is true for Kendall's $\tau^{(pq)}$ from Example~\ref{ex:ken_ex} \citep{nlogn}.
Similarly, \citet{n2logn} showed how to compute the Bergsma-Dassios sign covariance
${t^*}^{(pq)}$ in $O(n^2 \log n)$ operations despite the fact that its
kernel has degree $k=4$, as reviewed in Example~\ref{ex:t^*_ex}.  An
improvement to $O(n^2)$ was given by \cite{Heller2016}.
Finally, \cite{hoeffdingIndep} gives formulas for efficient
computation of his statistic
$D$ in Section 5 of his paper.

The situation with the class of statistics $T_h$ from
\eqref{sum_of_w_kern_stat} is more complicated. Since a kernel
$h$ of degree $k$ gives rise to an induced kernel $h^W$ of degree
$2k$, the number of operations equals $O(n^{2k})$ if we compute
$W_h^{(pq)}$ by na\"{i}vely following its definition.  This would lead
to a total of ${m \choose 2} O(n^{2k})$ operations to find all
$W_h^{(pq)}$, $1 \leq p < q \leq m$.  A more efficient way to compute
each $W_h^{(pq)}$ in $O(n^k)$ time proceeds as follows.  Using
\eqref{hw_kern} and \eqref{W_stat_unscaled}, we see that
\begin{equation} \label{W_altern}
W^{(pq)}_h=\frac{1}{ {n \choose k}{ n - k \choose k}}\sum_{{\bf i } \in \mathcal{P}(n, k)} h_{\bf i } \bar h_{\bf i},
\end{equation}
where for each ${\bf i} \in \mathcal{P}(n, k)$, and suppressing the
dependence on the pair $(p,q)$, we define
\[
h_{\bf i} := h\left({\bf R}^{(pq)}_{\bf i}\right)  \quad \text{and} \quad
\bar h_{\bf i} := \sum_{{\bf j } \in \mathcal{P}(n, k): {\bf j}\cap {\bf i} = \emptyset} h_{\bf j}.
\]
Hence, it suffices to calculate \begin{inparaenum} \item $h_{\bf i}$
  for all ${\bf i} \in \mathcal{P}(n, k)$, \item $\bar h_{\bf i}$ for
  all ${\bf i} \in \mathcal{P}(n, k)$ and \item the summation in
  \eqref{W_altern}\end{inparaenum}, in that order.   Evidently, step (i)
involves $O(n^k)$ operations. By the inclusion-exclusion principle,
\begin{equation} \label{hibar}
\bar h_{\bf i} = \sum_{{\bf j } \in \mathcal{P}(n, k)} h_{\bf j} + \sum_{ 1 \leq \ell \leq k} (-1)^{\ell} \sum_{\substack{{\bf j'} \in \mathcal{P}(n, \ell) : \\ {\bf j'} \subset {\bf i}}} h_{\bf j'},
\end{equation}
where
$ h_{\bf j'} := \sum_{{\bf j} \subset \mathcal{P}(n, k): {\bf j'}
  \subset {\bf j}} h_{\bf j} $
for each $1 \leq \ell < k$ and
${\bf j'} \subset \mathcal{P}(n, \ell)$.  Note that there are
$O(n^{\ell})$ many ${\bf j'} \in \mathcal{P}(n, \ell)$, and each
$h_{\bf j' }$ is a sum of $O(n^{k - \ell})$ many terms.  Finding
$h_{\bf j'}$ for all ${\bf j'} \in \mathcal{P}(n, \ell)$ and
$1\leq \ell < k$ thus requires $O(n^k)$ operations, and with these as
ingredients, by \eqref{hibar}, one can compute each $\bar h_{\bf i}$
in $O(1)$ operations if
$ \sum_{{\bf j } \in \mathcal{P}(n, k)} h_{\bf j}$ is already known.
But the quantity $ \sum_{{\bf j } \in \mathcal{P}(n, k)} h_{\bf j}$
only has to be computed once, with another $O(n^k)$ computations.
Consequently, step (ii) involves $O(n^k)$ operations, and so does
the final summation in step (iii).

\subsection{Simulations}

We first consider the sizes of tests based on our statistics $S_\tau$,
$S_{\rho_s}$, $S_{t^*}$, $T_\tau$, $T_{\hat{\rho}_s}$ and $Z_{t^*}$
that we introduced in \secref{asympt-null-distr}.
For comparison, we also consider the sum of squared Pearson
correlations $S_r$ from \cite{Schott2005}; recall \eqref{SchottStat}.
Each test compares a rescaled test statistic to the limiting standard
normal distribution from \thmref{main} and Corollary~\ref{spearnull}.
Targeting a size of 0.05, the null hypothesis $H_0$ is rejected if the
value of the rescaled statistic exceeds the 95th percentile of the
standard normal distribution.  \tabref{tncsize} gives Monte-Carlo
estimates of finite-sample sizes for different combinations of $n$ and
$m$.
The data underlying the table are i.i.d.~noncentral $t$ with $\nu = 3$
degrees of freedom and noncentrality parameter $\mu = 2$.  For each
combination of $m$ and $n$, the sizes of the tests are calculated from
$5,000$ independently generated data sets.  As expected, the tests
that use rank-based statistics all have their sizes get closer to the
nominal $0.05$ when $m$ and $n$ increase, but the test based on $S_r$
is not valid as it rejects too often. Recall that Schott's limit
theorem is derived under a Gaussian assumption. For certain new
non-parametric tests introduced in this paper, the test sizes are not
very satisfactory when $n$ is small, but they all get close to the
nominal $0.05$ level once $n$ becomes $128$, indicating that the
asymptotics described by \thmref{main} kicks in. Surprisingly, the
test given by $S_\rho$ has good size even for very small $n$.  It would
be of interest to explore more refined results, such as a Berry-Ess\'{e}en
bound or an Edgeworth expansion
for the normal convergences of
\thmref{main} in future research.

Next, we consider the power of the tests, as studied in
\secref{power}.  For different combinations of $(m, n)$, we generate
data as $n$ independent draws from three different $m$-variate
elliptical distributions.  These are
\begin{enumerate}
\item the $m$-variate normal distribution:  $N_m(0,  \Sigma)$, 
\item the $m$-variate $t$ distribution: $t_{\nu = 20, m}(\mu = 2\cdot{\bf 1}_m, \Sigma)$, and
\item the $m$-variate power exponential distribution: $\mathit{PE}(\mu = 0, \Sigma, \nu = 20)$. 
\end{enumerate}
Here, ${\bf 1}_m$ is the $m$-vector with all entries equal to $1$, and
the parametrizations of these distributions are in accordance with
\citet[pp.~8--10]{Oja}.  For each distribution, the scatter matrix
$\Sigma = (\sigma_{ij})$ is taken to be a matrix with $1$'s on the
diagonal and equal values for the off-diagonal entries, which are set
to obtain the signal strengths $\|\Theta_\tau\|_2^2 = 0.1$, $0.3$, and
$0.7$ based on Kendall's $\tau$.  We refer again to
\cite{ellipKendall} for the relationship between $\Sigma$ and
$\|\Theta_\tau\|_2^2$. 
The power, computed based on $500$ repetitions of experiments, for tests
based on $S_\tau$, $T_\tau$,
and the statistic $S^{\max}_\tau$ of \cite{HanLiu2014} are compared in
\tabref{empEvidence}.
As expected,  $S^{\max}_\tau$ is not well-adapted for detecting
the alternatives we generated.
For each $(m, n)$
combination and a given value of $\|\Theta_\tau\|_2^2$, the power of
the test based on $T_\tau$ is similar across different
data-generating distributions.  In contrast,
$S_\tau$ tends to yield more power for t-distributed data, and less
power for data with power exponential distribution. 
The stability of
the power rendered by $T_\tau$ points to our conjecture in
\secref{power} on the minimax optimality of $T_\tau$
over a wider class of distributions.

When the data are generated from multivariate normal distributions,
\tabref{empEvidence} includes a comparison to three further tests.
First, Schott's $S_r$ from~\eqref{SchottStat} yields a valid
(asymptotic) test in
this case.
As seen in \tabref{empEvidence}, the three
statistics, $S_\tau$, $T_\tau$ and $S_r$ give comparable power for
different combinations of $(m, n)$ and signal strength
$\|\Theta_\tau\|_2^2$.  Second, we tried the likelihood ratio test (LRT) with critical rejection region calibrated based on Corollary $1$ in \cite{JiangQi} whenever it is implementable, i.e. when $m < n$ in the table. It is generally less powerful than our new tests and $S_r$ in detecting the alternatives we consider. Lastly, we experimented with the statistic
proposed in \cite{CaiAndMa}, which again demonstrates similar power.
The test of \cite{CaiAndMa} is minimax rate optimal in detecting the
Frobenius norm signal $\|\Sigma - I_m\|_2$, but only for testing the
different hypothesis $\tilde H_0:\Sigma=I_m$
and under a Gaussian assumption on ${\bf X}$.  Under Gaussianity, our
hypothesis of independence $H_0$ from~(\ref{eq:H0}) is of course
equivalent to the $R=I_m$ instead.
Despite this mismatch, the comparable power of the test of
\cite{CaiAndMa} indicates that the three statistics $S_\tau$, $T_\tau$
and $S_r$ are all powerful in detecting the signal
$\|R - I_m\|_2 \asymp \|\Theta_\tau\|_2$; recall that our experiment
has $\Sigma$ with 1's on the diagonal so that
$\Sigma=R$.  Lastly, we speculate that $S_r$
is minimax optimal in detecting the signal $\|R - I_m\|_2$ for the
null hypothesis $H_0$ \emph{under a Gaussian assumption} on
${\bf X}$, although to our knowledge this has not yet been
demonstrated theoretically in the literature; see also the last
section of \cite{CaiAndMa} for other related open problems.

To provide further evidence for the conjectures we have made, we
repeated the above simulation study in a case without equicorrelation.
Specifically, we generated data from elliptical distributions with
scatter matrices $\Sigma$ that are pentadiagonal.  The precise setup has
$\Sigma$ with $1$'s on the diagonal, equal values for the entries
$\sigma_{ij}$, $1 \leq |i - j| \leq 2$, and zeros elsewhere.
The results are reported in \tabref{empConj} and lead to similar
conclusions as \tabref{empEvidence}.


Finally, in \tabref{dirtpowernew}, we report Monte Carlo estimates of
power in a setting of data contamination and without
restricting solely to Kendall's tau.
We generate data as $n$ independent random vectors
${\bf X}_1,\dots,{\bf X}_n$ whose $m$ coordinates are dependent.  Each
${\bf X}_i$ is multivariate normal, with mean vector zero and
pentadiagonal covariance matrix.  Precisely,
${\bf X}_i \sim N_m \left(0, \Sigma_{\text{band}2} \right)$, where
$\Sigma_{\text{band}2}=(\sigma_{ij})$ has diagonal entries
$\sigma_{ii}=1$ and entry $\sigma_{ij}=0.1$ if $1\le |i - j| \leq 2$
and $\sigma_{ij}=0$ if $|i-j| \ge 3$.  For each combination of
$(n, m)$, we randomly select $5 \%$ of the $nm$ values of the data
matrix to be contaminated. Each selected value is replaced by an
independent draw from $N(2.5, 0.2)$ multiplied with a random sign.
Such outliers tend to decrease observed correlations, but the rank
correlations are affected less than Pearson correlations.  The
empirical power of these tests is computed based on $500$ repetitions
of experiments.  As the results in \tabref{dirtpowernew} show,
Schott's $S_r$ tends to give smaller power than the other statistics.
At the larger sample sizes, when the test have approximately nominal
size (recall Table~\ref{tab:tncsize}), the `Kendall statistics'
$S_\tau$ and $T_\tau$ show rather similar power, and the same happens
for $S_{\rho_s}$ and $T_{\hat\rho_s}$.  For the Bergsma-Dassios
statistics, there is some evidence that $Z_{t^*}$ has greater power
than $S_{t^*}$ in this setting.

\subsection{Comparison of the statistics}

When data are approximately Gaussian, the statistic $S_r$ of
\cite{Schott2005} yields a powerful test.  Since the computation of a
Pearson correlation is linear in the sample size $n$, it is
inexpensive to compute, and its distribution is
well-approximated by a normal limit at surprisingly small sample
sizes (see Table $1$ in the original paper of \cite{Schott2005}).  However, as one would expect, our simulations show that the
size of the test may be far from nominal in non-Gaussian settings.

The Kendall and Spearman `sum of squares' $S_\tau$ and $S_{\rho_s}$
are attractive alternatives that are nearly as efficient to compute as
$S_r$.  The use of rank correlations guards against effects of
non-Gaussianity all the while leading to rather little loss in power
when data are indeed Gaussian. Compared to $S_{\rho_s}$, $S_\tau$ requires somewhat larger samples for the normal approximation to the null distribution to be useful.


The statistics $T_h$ similarly guard against non-Gaussianity but are
computationally more costly to use.  However, as we explored in the
case of $T_\tau$, their unbiasedness as an estimator of the signal
strength leads to power that is similarly large across very different
alternatives.  We consider this an attractive feature and conjecture
that these statistics are minimax optimal, at least for a wide class of elliptical
distributions.


Another interesting assessment of independence is obtained by using
the statistics $Z_{t^*}$ and $Z_D$ based on Bergsma and Dassios' sign
covariance $t^*$ and Hoeffding's $D$, respectively.  Both $t^*$ and
$D$ have the intriguing property of providing a consistent assessment
of pairwise independence.  For continuous observation, their
expectations are zero if and only if the considered pair of random
variables is independent.  In the case of $t^*$, this also holds for
discrete variables.  Under independence, $t^*$ and $D$ are degenerate
U-statistics \citep{Nandy2016}.  The computational cost of their use
in $Z_{t^*}$ and $Z_D$ is comparable to that of $T_\tau$.  However,
determining the signal strength relevant for $Z_{t^*}$ is more
complicated than for $T_{\tau}$.  We are not aware of any literature
that would offer a simple relationship between the expectation of
$t^*$ or $D$ and the scatter matrix of an elliptical distribution.

\section{Discussion}\label{sec:conclude}

This paper treats nonparametric tests of independence using pairwise
rank correlations or, more precisely, rank correlations that are also
U-statistics.  As reviewed in Section~\ref{sec:rank-cor}, the
motivating examples are Kendall's tau and Spearman's rho but also
Hoeffding's $D$ and Bergsma and Dassios' sign covariance $t^*$.  The
latter two correlations allow for consistent assessment of pairwise
independence but form degenerate U-statistics.  With a view towards
alternatives in which dependence is ``spread out over many
coordinates'', we proposed statistics that are formed as
sums of many pairwise dependency signals
as explained in
Section~\ref{sec:main}.  In a high-dimensional regime in which both 
the number of variables $m$ and the sample size $n$ tend to infinity, 
we derived normal limits for the null distributions
of these statistics
(Section~\ref{sec:asympt-null-distr}).  Our general framework gives
results for U-statistic degeneracy of order up to two.  Finally, we
explored aspects of power theoretically and in simulations
(Sections~\ref{sec:power} and~\ref{sec:simulations}).

Under the null hypothesis of independence, the $m$ rank vectors are
independent, each following a uniform distribution on the symmetric
group $\mathfrak{S}_n$.  In small to moderate size problems, we may
thus implement exact tests using Monte Carlo simulation to compute
critical values.  However, for large-scale problems and/or when using
the computationally more involved $t^*$ or $D$, the asymptotic normal
distributions we derived furnish accurate approximations and allow for
great computational savings.


Our study of power has focused on the case of Kendall's tau.  In a
minimax paradigm and for Gaussian equicorrelation alternatives we
showed rate-optimality for the test based on $T_\tau$, the unbiased
estimator of the signal strength defined
via~(\ref{sum_of_w_kern_stat}) with kernel $h=h_\tau$.  It would be an
interesting problem for future work to prove such rate-optimality more
broadly, for more general alternatives as well as other kernels.  In
particular, for the kernel associated to Kendall's tau, we conjectured
in Section~\ref{equi} that rate-optimality holds for
alternatives from a wide class of elliptical distributions.

\section{Acknowledgments}

The authors would like to thank the Editor and the referees for their very helpful comments which lead to many improvements in this paper. 

\appendix

\section{Motivation of Schott's statistic as a Rao score} \label{sec:Rao}

We show that, up to a rescaling by the squared sample size, the
statistic $S_r$ from \eqref{SchottStat} is Rao's score statistic in
the multivariate normal setting. Let ${\bf X}_1, \dots,{\bf X}_n$ be
i.i.d.\ $m$-variate normal random vectors with mean vector
$\boldsymbol{\mu}$ and \emph{precision} matrix $K = (\kappa^{(pq)})$.  Let
$\bar{\bf X} := \tfrac{1}{n} \sum_i{\bf X}_i$ be the sample mean
vector, and let
$W = (w^{(pq)}) := \tfrac{1}{n} \sum_i ({\bf X}_i - \bar{\bf X} )(
{\bf X}_i - \bar{\bf X})^T $ be the sample covariance matrix.  The
score test considers the gradient $\nabla l_n$ of the multivariate normal
log-likelihood function
\[
l_n(\boldsymbol{\mu},K) \;=\; \frac{1}{2} n\log |K| - \frac{1}{2}\sum_i({\bf X}_i - \boldsymbol{\mu})^T K ({\bf X}_i - \boldsymbol{\mu})
\]
at the maximum likelihood estimate
$ (\hat{\boldsymbol{\mu}}_{0}, \hat{K}_{0})$ under the null hypothesis
$H_0$ from~(\ref{eq:H0}).  
Specifically, the score test rejects $H_0$ for large values of
\begin{equation} \label{quadform}
\nabla l_n(\hat{\boldsymbol{\mu}}_0,\hat K_0)^T  I(
\hat{\boldsymbol{\mu}}_0,\hat K_0)^{-1}  \nabla
l_n(\hat{\boldsymbol{\mu}}_0,\hat K_0),
\end{equation}
where $I(\boldsymbol{\mu},K)$ is the Fisher-information matrix,
$\hat{\boldsymbol{\mu}}_{0}=\bar{\bf X}$ and
$\hat{K}_{0}= \diag(w^{(11)}, \dots,w^{(mm)})^{-1}$.

Routine calculations show that
\begin{equation} \label{partials}
 \frac{\partial l_n}{\partial \mu}\Bigg|_{\boldsymbol{\mu}
   =\hat{\boldsymbol{\mu}}_{0},K= \hat{K}_{0}} = 0  \text{, } \quad
 \frac{\partial l_n}{\partial \kappa^{(pq)}}\Bigg|_{\boldsymbol{\mu} =\hat{\boldsymbol{\mu}}_{0},K= \hat{K}_{0}}   = \begin{cases}
   0 &\text{if}
    \quad p = q,\\
 - nw^{(pq)} &\text{if} \quad p< q.
  \end{cases}
\end{equation}
Moreover, for $p < q$ and $p' < q'$,
\begin{align} \label{fisher}
[I({\hat{\boldsymbol{\mu}}_{0}, \hat{K}_{0}})]_{\kappa^{(pq)}, \kappa^{(p'q')}} 
&= \mathbb{E}_{\hat{\boldsymbol{\mu}}_{0}, \hat{K}_{0}}[ (X^{(p)} - \mu^{(p)})(X^{(q)} - \mu^{(q)})(X^{(p')} - \mu^{(p')})(X^{(q')} - \mu^{(q')})]\notag\\
&= \begin{cases}
  ( [\hat{K}_{0}]_{pp} [\hat{K}_{0}]_{qq} )^{-1}&\text{if}
    \quad (p, q) =(p',  q'),\\
 0&\text{if}
    \quad (p, q) \not=(p',  q'),
  \end{cases}
\end{align}
where $\mathbb{E}_{\hat{\boldsymbol{\mu}}_{0}, \hat{K}_{0}}$ means
taking expectation under a multivariate normal distribution with mean
$\hat{\boldsymbol{\mu}}_{0}$ and precision matrix $\hat{K}_{0}$. In
light of \eqref{partials} and \eqref{fisher}, one obtains that the
statistic from \eqref{quadform} is equal to $n^2$ times Schott's
statistic $S_r$ from \eqref{SchottStat}.

\section{Technical lemmas}\label{sec:techproof}

The following lemma will be used to prove both \lemsref{4prod_lem1}
and \lemssref{4prod_lem2} below, as well as \lemsref{moment_lem} and
\lemssref{degen_relate}.  We make use of the following notion of
multisets.  For $1 \leq k \leq n$, if ${\bf i}^1, \dots, {\bf i}^r$
are tuples in $\mathcal{P}(n, k)$, let the pair
$(\cup_{\omega = 1}^r {\bf i}^\omega, f_m)$ be the multiset associated
with $\cup_{\omega = 1}^r {\bf i}^\omega$, where
$f_m : \cup_{\omega = 1}^r {\bf i}^\omega \longrightarrow \mathbb{N}$
is the multiplicity function such that $f_m (i)$ is the number of
occurrences of index $i$ in the sets ${\bf i}^1, \dots, {\bf i}^r$.

\begin{lemma} \label{lem:tech_lem}
Let $h : (\mathbb{N}^2)^k \longrightarrow \mathbb{R}$ be a kernel that
is symmetric in its $k$ arguments and has order of degeneracy $d$ under $H_0$.
\begin{enumerate}
\item Suppose ${\bf i}^1, \dots, {\bf i}^4 \in \mathcal{P}(n, k)$.  If
  $|\cup_{\omega = 1}^4 {\bf i}^\omega| > 4k - 2d$, then
\[
\mathbb{E}_0 \left[  
\prod_{\omega = 1}^4 h\left({\bf R}^{(p^\omega q^\omega)}_{{\bf i}^\omega}\right) 
\right] = 0
\]
for all $1 \leq p^\omega \not= q^\omega \leq m$, $\omega = 1, \dots,
4$. If $|\cup_{\omega = 1}^4 {\bf i}^\omega| = 4k - 2d$, then
$\mathbb{E}_0 [ \prod_{\omega = 1}^4 h({\bf R}^{(p^\omega
  q^\omega)}_{{\bf i}^\omega}) ] $ is nonzero only if $|{\bf i}^\omega
\cap (\cup_{\omega' \not= \omega} {\bf i}^{\omega'})| = d$ for all
$\omega = 1, \dots, 4$, and in this case the multiplicity function $f_m$
of the multiset $(\cup_{\omega = 1}^4 {\bf i}^\omega, f_m)$ takes
value either $1$ or $2$.

\item Suppose ${\bf i}^1, \dots, {\bf i}^8 \in \mathcal{P}(n, k)$.  If
  $|\cup_{\omega = 1}^8 {\bf i}^\omega| > 8k - 4d$, then
\[
\mathbb{E}_0 \left[  
\prod_{\omega = 1}^8 h\left({\bf R}^{(p^\omega q^\omega)}_{{\bf i}^\omega}\right) 
\right] = 0
\]
for all $1 \leq p^\omega \not= q^\omega \leq m$, $\omega = 1, \dots,
8$. If $|\cup_{\omega = 1}^8 {\bf i}^\omega| = 8k - 4d$, then
$\mathbb{E}_0 [ \prod_{\omega = 1}^8 h({\bf R}^{(p^\omega
  q^\omega)}_{{\bf i}^\omega}) ] $ is nonzero only if $|{\bf i}^\omega
\cap (\cup_{\omega' \not= \omega} {\bf i}^{\omega'})| = d$ for all
$\omega = 1, \dots, 8$, and in this case the multiplicity function
$f_m$ of the multiset $(\cup_{\omega = 1}^8 {\bf i}^\omega, f_m)$
takes value either $1$ or $2$.
\end{enumerate}
\end{lemma}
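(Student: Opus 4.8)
The plan is to split the statement into two independent ingredients: a vanishing criterion for the product, and an elementary counting argument on the multiset $\cup_\omega {\bf i}^\omega$. For each $\omega$ write $s_\omega := |{\bf i}^\omega \cap (\cup_{\omega'\neq\omega}{\bf i}^{\omega'})|$ for the number of indices that ${\bf i}^\omega$ shares with the remaining tuples. The central claim I would establish is that \emph{if $s_\omega < d$ for some $\omega$, then $\mathbb{E}_0[\prod_\omega h({\bf R}^{(p^\omega q^\omega)}_{{\bf i}^\omega})] = 0$}. To see this, fix such an $\omega$ and let $P_\omega := {\bf i}^\omega \setminus (\cup_{\omega'\neq\omega}{\bf i}^{\omega'})$ be the indices private to ${\bf i}^\omega$; by construction these occur in no other tuple. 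Under $H_0$ all entries $X^{(p)}_i$ are mutually independent, so conditioning on every entry \emph{except} $\{X^{(p^\omega)}_i, X^{(q^\omega)}_i : i \in P_\omega\}$ leaves each factor $\omega'\neq\omega$ fully determined, while collapsing the conditional expectation of the $\omega$-th factor to $\mathbb{E}_0[h({\bf R}^{(p^\omega q^\omega)}_{{\bf i}^\omega}) \mid {\bf X}^{(p^\omega q^\omega)}_{{\bf i}^\omega \cap (\cup_{\omega'\neq\omega}{\bf i}^{\omega'})}]$. This conditions on exactly the $s_\omega$ shared observations, so when $s_\omega < d$ the degeneracy characterization \eqref{gen_prop} (and the centering $\mathbb{E}_0[h]=0$ in the case $s_\omega=0$) forces it to vanish; pulling this zero factor out of the expectation proves the claim.

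Taking the contrapositive, a nonzero expectation forces $s_\omega \ge d$ for every $\omega$, and I would translate this into a bound on the union by counting with multiplicities. Let $a_t$ be the number of indices of the union lying in exactly $t$ of the tuples. For part (i) one has $\sum_t a_t = |\cup_\omega {\bf i}^\omega|$ and $\sum_t t\,a_t = 4k$, while each shared index of multiplicity $t\ge 2$ is counted once in each of its $t$ tuples, giving $\sum_\omega s_\omega = \sum_{t\ge 2} t\,a_t \ge 4d$. Since $2(t-1)\ge t$ for $t\ge 2$, we get $2\sum_{t\ge2}(t-1)a_t \ge \sum_{t\ge2} t\,a_t \ge 4d$, whence $4k - |\cup_\omega {\bf i}^\omega| = \sum_{t\ge2}(t-1)a_t \ge 2d$, i.e. $|\cup_\omega {\bf i}^\omega| \le 4k - 2d$. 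This is precisely the first assertion of (i).

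For the equality case $|\cup_\omega {\bf i}^\omega| = 4k - 2d$ both inequalities above must be tight: tightness of $2(t-1)\ge t$ forces $a_t=0$ for $t\ge3$, so the multiplicity function takes only the values $1$ and $2$, and tightness of $\sum_\omega s_\omega \ge 4d$ together with each $s_\omega\ge d$ forces $s_\omega = d$ for all $\omega$. Part (ii) is identical with eight tuples, using $\sum_t t\,a_t = 8k$ and $\sum_{t\ge2}t\,a_t\ge 8d$ to obtain $|\cup_\omega {\bf i}^\omega|\le 8k-4d$ with the same rigidity at equality. The one step demanding care is the conditioning in the central claim: I must verify that, under $H_0$, discarding only the private observations of a single factor both fixes all other factors and reduces the surviving conditional expectation to the single-factor form in \eqref{gen_prop}. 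This rests on the full mutual independence of the entries $X^{(p)}_i$ under $H_0$ and on private indices appearing in no other tuple; once that is secured, the remainder is bookkeeping.
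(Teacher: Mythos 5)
Your proof is correct and follows essentially the same route as the paper's: both arguments hinge on isolating a tuple that shares fewer than $d$ indices with the rest, conditioning so that the remaining factors are fixed while the isolated factor's conditional expectation reduces to the single-kernel form in \eqref{gen_prop} and hence vanishes, and then combining this with elementary counting on the multiplicities of the union. If anything, your contrapositive organization through the counts $a_t$ makes the equality case (forcing $s_\omega = d$ for every $\omega$ and multiplicities in $\{1,2\}$) fully explicit, a step the paper's proof only asserts ``can be argued similarly'' and omits.
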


\begin{proof}
  We consider the first claim $(i)$.  Since
  ${\bf i}^1, \dots, {\bf i}^4$ are tuples in $\mathcal{P}(n, k)$, the
  multiplicity function $f_m$ of the multiset
  $(\cup_{\omega = 1}^4 {\bf i}^\omega, f_m)$ is such that
  $\sum_{i \in \cup_{\omega = 1}^4 {\bf i}^\omega} f_m(i) = 4k$. If
  $|\cup_{w = 1}^4 {\bf i}^\omega| > 4k - 2d$, the cardinality of the
  set $\{i \in \cup_{w = 1}^8 {\bf i}^\omega : f_m(i) = 1\}$ must be
  greater than $4k - 4d$, in which case there exists an $\omega'$ so
  that
  $c:= |{\bf i}^{\omega'} \cap (\cup_{\omega \not = \omega'} {\bf
    i}^\omega)| < d$.
  By symmetry, we may assume $w' = 1$ without loss of generality.

  Let
  ${\bf j} = (j_{1}, \dots, j_c) = {\bf i}^1 \cap (\cup_{w \not = 1}
  {\bf i}^\omega)$ as sets. Then, conditional on
  ${\bf X}^{(p^1 q^1)}_{\bf j}$, we have that
  $h({\bf R}_{{\bf i}^1}^{(p^1 q^1)}) $ is independent of all other
  factors $h({\bf R}_{{\bf i}^\omega}^{(p^\omega q^\omega)})$ for
  $\omega = 2, \dots, 4$.  Since $h$ has order of degeneracy $d$ under
  $H_0$, by \eqref{gen_prop},
  $\mathbb{E}_0[h({\bf R}_{{\bf i}^1}^{(p^1 q^1)}) |{\bf X}^{(p^1
    q^1)}_{\bf j} ] = 0$.  Therefore, by the aforementioned
  conditional independence,
\[
\mathbb{E}_0 \left[  
\prod_{\omega = 1}^4 h\left({\bf R}^{(p^\omega q^\omega)}_{{\bf i}^\omega}\right) 
\middle|{\bf X}^{(p^1 q^1)}_{\bf j} \right] \equiv 0
\]
as a function of ${\bf X}^{(p^1 q^1)}_{\bf j}$.  This in turn implies
that $\mathbb{E}_0 [  
\prod_{\omega = 1}^4 h({\bf R}^{(p^\omega q^\omega)}_{{\bf i}^\omega}) 
] = 0$.

The necessary condition for
$\mathbb{E}_0 [ \prod_{\omega = 1}^4 h({\bf R}^{(p^\omega
  q^\omega)}_{{\bf i}^\omega}) ] $
to be nonzero when $|\cup_{\omega = 1}^4 {\bf i}^\omega| = 4k - 2d$
can be argued similarly, and we omit the details.

The proof of $(ii)$ is analogous to that of $(i)$.  Again, we omit the details.
\end{proof}

The following three lemmas will be used to prove
\lemref{ell2-convergence}. Recall the notational shorthand
$\bar{U}^{(pq)}_h := \big(U_h^{(pq)}\big)^2 - \mu_h$ for
$1 \leq p < q \leq m$, defined in the proof of \thmref{main}.

\begin{lemma} \label{lem:4prod_lem1} Suppose
  $1 \leq p , q, l, u\leq m$ are four distinct indices, and $h$ is a
  kernel of order of degeneracy $d$ satisfying \assumpref{sp_assume}
  under $H_0$. Then
  \[
\mathbb{E}_0\left[
    \bar{U}_h^{(pl)}\bar{U}_h^{(ql)}\bar{U}_h^{(pu)}\bar{U}_h^{(qu)}
  \right]=O(n^{-4d -1}).
  \]
\end{lemma}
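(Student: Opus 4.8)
The plan is to turn each squared U-statistic into a double sum over index tuples and expand the fourfold product into an eightfold sum of kernel products, to which the degeneracy bookkeeping of \lemref{tech_lem}(ii) applies. Using $\mu_h=\binom{n}{k}^{-2}\sum_{\mathbf{i},\mathbf{j}}\zeta^h_{|\mathbf{i}\cap\mathbf{j}|}$, I would write each factor as a sum of mean-zero \emph{blocks} $G^{(ab)}_{\mathbf{i},\mathbf{j}}:=h(\mathbf{R}^{(ab)}_{\mathbf{i}})h(\mathbf{R}^{(ab)}_{\mathbf{j}})-\zeta^h_{|\mathbf{i}\cap\mathbf{j}|}$, so that
\[
\mathbb{E}_0\big[\bar U_h^{(pl)}\bar U_h^{(ql)}\bar U_h^{(pu)}\bar U_h^{(qu)}\big]
=\binom{n}{k}^{-8}\sum_{\mathbf{i}^1,\dots,\mathbf{i}^8}\mathbb{E}_0\big[G^{(pl)}_{\mathbf{i}^1,\mathbf{i}^2}G^{(ql)}_{\mathbf{i}^3,\mathbf{i}^4}G^{(pu)}_{\mathbf{i}^5,\mathbf{i}^6}G^{(qu)}_{\mathbf{i}^7,\mathbf{i}^8}\big],
\]
where the four blocks sit on the four edges of the cycle $p\!-\!l\!-\!q\!-\!u\!-\!p$ and each block depends on its two variables at its two index tuples. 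The centering $\zeta^h$ is what makes each block mean-zero, and this is the feature that will be responsible for the extra factor of $n^{-1}$.

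First I would use the mean-zero property of the blocks together with the $H_0$-independence of the rank vectors and \lemref{general_properties}: if some block shares no coordinate with the others at a common variable, it is independent of the remaining three and its vanishing mean kills the whole expectation. Hence only \emph{connected} configurations survive, in which every block shares coordinates (at a shared variable) with a neighbour on the cycle; this already forces at least two cross-group coincidences among the eight tuples. For such minimal-connection configurations the block expansion collapses nicely: replacing any single block by its constant $\zeta^h$ isolates one of its neighbours and therefore contributes zero, so the block expectation reduces to the raw eightfold kernel product $\mathbb{E}_0[\prod_{\omega}h(\mathbf{R}^{(p^\omega q^\omega)}_{\mathbf{i}^\omega})]$, to which \lemref{tech_lem}(ii) and \assumpref{sp_assume} apply directly.

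The counting then proceeds by an anchoring budget. By \assumpref{sp_assume} a kernel factor integrates to zero once either of its two variables is anchored by fewer than $d$ shared coordinates, so each of the eight factors must carry at least $d$ shared coordinates in each variable, a demand of $16d$ anchoring units. A within-group coincidence supplies $4$ such units while a cross-group coincidence supplies only $2$; writing $w$ and $c$ for their numbers, the demand reads $2w+c\ge 8d$, and together with $c\ge 2$ this gives $w+c\ge 4d+1$ coinciding indices, i.e. $|\cup_\omega\mathbf{i}^\omega|\le 8k-4d-1$. Multiplying the $O(n^{8k-4d-1})$ surviving configurations by the prefactor $\binom{n}{k}^{-8}\asymp n^{-8k}$ and the $O(1)$ values of the bounded kernel yields the claimed $O(n^{-4d-1})$. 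Note the within-group boundary pattern with $w=4d$, $c=0$ meets the budget at $|\cup|=8k-4d$ but is exactly the factorized, order-$n^{-4d}$ contribution removed by the centering.

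The hard part will be the bookkeeping away from the minimal-connection case. For richer connection topologies some centering terms $\zeta^h$ no longer vanish by isolation, and I would have to check uniformly, over both degeneracy orders $d\in\{1,2\}$ and all within/cross coincidence patterns, that the surviving $\zeta^h$-terms (whose active blocks overlap in at least $d$, since $\zeta^h_c=0$ for $c<d$) together with the remaining kernel product still keep the union no larger than $8k-4d-1$. Making precise that the centering deletes exactly the factorized order-$n^{-4d}$ contributions while every connected configuration is forced to pay for one additional coincidence is the crux of the argument, and the place where the degeneracy constants $\zeta^h$ and the anchoring count must be reconciled carefully rather than applied factor by factor.
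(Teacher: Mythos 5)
You have the right raw ingredients (the eightfold kernel expansion, \lemref{tech_lem}(ii), and the anchoring role of \assumpref{sp_assume}), but your argument has a genuine gap, and the claim you use to organize the bookkeeping is false. Consider the configuration in which the only cross-block links are between the $(p,l)$ and $(q,l)$ blocks (a shared coordinate at variable $l$) and between the $(p,u)$ and $(q,u)$ blocks (a shared coordinate at $u$), while each block's own two tuples intersect in exactly $d$ indices, so every block constant equals $\zeta^h_d>0$. Writing $H_\omega$ for the uncentered product of the two kernel factors of block $\omega$ and $G_\omega=H_\omega-\zeta^h_d$, the two linked pairs are mutually independent and $\mathbb{E}_0[G_\omega G_{\omega'}]=\mathbb{E}_0[H_\omega H_{\omega'}]-(\zeta^h_d)^2$ for a linked pair, so
\[
\mathbb{E}_0\left[G_1G_2G_3G_4\right]
=\left(\mathbb{E}_0[H_1H_2]-(\zeta^h_d)^2\right)\left(\mathbb{E}_0[H_3H_4]-(\zeta^h_d)^2\right),
\]
which is \emph{not} the raw product $\mathbb{E}_0[H_1H_2H_3H_4]$. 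Replacing a \emph{single} block by its constant does contribute zero, as you say, but the inclusion--exclusion terms in which one block from \emph{each} linked pair is replaced (for example $(\zeta^h_d)^2\,\mathbb{E}_0[H_3H_4]$) do not vanish; for Kendall's kernel one can compute that $\mathbb{E}_0[H_1H_2]-(\zeta^h_1)^2$ is a strictly positive variance of a conditional expectation when the two blocks share one coordinate at $l$. Consequently your anchoring budget ($2w+c\ge 8d$ together with $c\ge 2$, hence union size at most $8k-4d-1$) controls only the terms carrying all eight kernel factors, while the mixed $\zeta^h$-terms are exactly the ones you defer to ``the hard part'' in your closing paragraph. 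Since establishing that those terms are also $O(n^{-4d-1})$ after summation is the entire content of the lemma, the proof is incomplete. Two smaller points: a coincidence between tuples of blocks sharing no variable (the pairs $(p,l)$ and $(q,u)$, say) supplies zero anchoring units, not two, so the budget needs the forced coincidences to be at shared variables (they are, by the isolation argument, but this must be stated); and non-isolation of every block does not give a connected configuration, only a splitting into linked pairs.

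The paper avoids this entirely by centering globally rather than tuple-by-tuple. Any proper subset of the four pairs $(p,l),(q,l),(p,u),(q,u)$ forms a tree in the variable graph, so the corresponding squared U-statistics are mutually independent (they are functions of antirank vectors, as in the proof of \lemref{general_properties}); hence every cross term in the expansion of $\prod\bigl((U_h)^2-\mu_h\bigr)$ collapses to a power of $\mu_h$, yielding the exact identity
\[
\mathbb{E}_0\left[\bar U^{(pl)}_h\bar U^{(ql)}_h\bar U^{(pu)}_h\bar U^{(qu)}_h\right]
=\mathbb{E}_0\left[\bigl(U^{(pl)}_h\bigr)^2\bigl(U^{(ql)}_h\bigr)^2\bigl(U^{(pu)}_h\bigr)^2\bigl(U^{(qu)}_h\bigr)^2\right]-\mu_h^4 .
\]
A single counting argument for the raw product---essentially your step three, with \lemref{tech_lem}(ii) and \assumpref{sp_assume} forcing the only survivors at union size $8k-4d$ to be four disjoint within-pair $d$-intersections of value $(\zeta^h_d)^4$---then shows that its leading term is ${k\choose d}^8\bigl(d!\,\zeta^h_d/n^d\bigr)^4$, which by \lemref{moment_lem}(i) is precisely the leading term of $\mu_h^4$. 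The $O(n^{-4d-1})$ bound comes from this cancellation of two explicitly computed constants, not from a per-configuration budget. If you replace your tuple-level centering by this global identity, your anchoring count becomes, in essence, the paper's proof.
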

\begin{proof}

  Without loss of generality, we prove the result for
  $(p,q, l ,u ) = (1, 2, 3, 4)$. Note that for any four distinct
  indices $1 \leq p_1, p_2, p_3, p_4 \leq m$, the antiranks
  ${\bf R}^{(p_1)|(p_2)}$, ${\bf R}^{(p_2)|(p_3)}$,
  ${\bf R}^{(p_3)|(p_4)}$ are independent. Since $\bar{U}^{(13)}$,
  $\bar{U}^{(23)}$, $\bar{U}^{(14)}$, $\bar{U}^{(24)}$ are functions
  of ${\bf R}^{(1)|(3)}$, ${\bf R}^{(2)|(3)}$, ${\bf R}^{(1)|(4)}$,
  ${\bf R}^{(2)|(4)}$, respectively, on expansion,
\begin{multline*}
\mathbb{E}_0\left[ \bar{U}^{(13)}\bar{U}^{(23)}\bar{U}^{(14)}\bar{U}^{(24)} \right]
= \mathbb{E}_0\left[  \left(U_h^{(13)}\right)^2  \left(U_h^{(23)}\right)^2 \left(U_h^{(14)}\right)^2 \left(U_h^{(24)}\right)^2 \right] -
\mu_h^4 \\
=\mathbb{E}_0\left[  \left(U_h^{(13)}\right)^2  \left(U_h^{(23)}\right)^2 \left(U_h^{(14)}\right)^2 \left(U_h^{(24)}\right)^2 \right]  - {k \choose d}^8 \left(\frac{ d!\zeta_{d}}{n^{d}} \right)^4 + O\left(n^{-4d -1}\right),
\end{multline*}
where the last equality follows from \lemref{moment_lem}$(i)$.   The
proof is completed if we are able to show that
\begin{equation}
\mathbb{E}_0\left[  \left(U_h^{(13)}\right)^2  \left(U_h^{(23)}\right)^2 \left(U_h^{(14)}\right)^2 \left(U_h^{(24)}\right)^2 \right]  = {k \choose d}^8 \left(\frac{ d!\zeta_{d}}{n^{d}} \right)^4 + O\left(n^{-4d -1}\right). \label{eq:orderofrightprod}
\end{equation}

For ${\bf i}^\omega \in \mathcal{P}(n, k)$, $\omega = 1, \dots, 8$, we
define 
\begin{multline} \label{P_function}
P({\bf i}^1, \dots, {\bf i}^8)  
  = 
\left(\prod_{\omega = 
      1}^2 h\left(\mathbf{R}^{(13)}_{\mathbf{i}^w} \right) 
      \right)
\left(\prod_{\omega = 
      3}^4 h\left(\mathbf{R}^{(23)}_{\mathbf{i}^w} \right) 
      \right)
\left(\prod_{\omega = 
      5}^6 h\left(\mathbf{R}^{(14)}_{\mathbf{i}^w} \right) 
      \right)
\left(\prod_{\omega = 
      7}^8 h\left(\mathbf{R}^{(24)}_{\mathbf{i}^w} \right) 
      \right) . 
 \end{multline}
  Then on expansion,
\begin{equation} 
\mathbb{E}_0\left[  \left(U_h^{(13)}\right)^2  \left(U_h^{(23)}\right)^2 \left(U_h^{(14)}\right)^2 \left(U_h^{(24)}\right)^2 \right] ={n \choose k}^{-8} \sum_{ \substack{\mathbf{i}^\omega \in\mathcal{P}(n, k) \\  1 \leq \omega  \leq 8}}
\mathbb{E}_0\left[P({\bf i}^1, \dots, {\bf i}^8) \right] \label{sum_of_Ps}
. 
\end{equation}
Each summand $\mathbb{E}_0[P({\bf i}^1, \dots,
{\bf i}^8)]$ on the right hand side of \eqref{sum_of_Ps} depends on the multiset $(\cup_{w = 1}^8 {\bf
  i}^w, f_m)$.
 If $|\cup_{w = 1}^8 {\bf i}^\omega| > 8k - 4d$, by \lemref{tech_lem}$(ii)$, $\mathbb{E}_0[P({\bf i}^1, \dots,
{\bf i}^8)] = 0$.

If $|\cup_{w = 1}^8 {\bf i}^\omega| = 8k - 4d$, 
by \lemref{tech_lem}$(ii)$, for
$\mathbb{E}_0[P({\bf i}^1, \dots, {\bf i}^8)]$ to be non-zero it is
necessary that
$|{\bf i}^{\omega'} \cap (\cup_{\omega \not = \omega'} {\bf
  i}^\omega)| = d$
for all $\omega' = 1, \dots, 8$, in which case $f_m$ takes the value
$1$ or $2$.  Suppose this is true. Under $H_0$, conditioning on
${\bf X}^{(1)}_{{\bf i}^1\cap ({\bf i}^2 \cup {\bf i}^5 \cup {\bf
    i}^6)}$
and
${\bf X}^{(3)}_{{\bf i}^1\cap ({\bf i}^2 \cup {\bf i}^3 \cup {\bf
    i}^4)}$,
$h({\bf R}_{{\bf i}^1}^{(1, 3)}) $ is independent of all other
multiplicative factors on the right hand side of
\eqref{P_function}. If ${\bf i}^1$ intersects with the set
$\cup_{\omega = 3}^8{\bf i}^{\omega} \setminus {\bf i}^2$, at least
one of ${\bf i}^1\cap ({\bf i}^2 \cup {\bf i}^5 \cup {\bf i}^6)$ and
${\bf i}^1\cap ({\bf i}^2 \cup {\bf i}^3 \cup {\bf i}^4)$ has
cardinality less than $d$ given that $f_m \leq 2$, and then
\assumpref{sp_assume} yields that
\[
\mathbb{E}_0\left[ h\left({\bf R}_{{\bf i}^1}^{(13)}\right) \middle|
{\bf X}^{(1)}_{{\bf i}^1\cap ({\bf i}^2 \cup {\bf i}^5 \cup {\bf i}^6)}, {\bf X}^{(2)}_{{\bf i}^1\cap ({\bf i}^2 \cup {\bf i}^3 \cup {\bf i}^4)}\right] = 0.
\]
Hence, $\mathbb{E}_0[P({\bf i}^1, \dots, {\bf i}^8)] = 0$ by the
aforementioned conditional independence. Similarly,
${\bf i}^3, {\bf i}^5, {\bf i}^7$ can only intersect with
${\bf i}^4, {\bf i}^6, {\bf i}^8$, respectively, to ensure that
$\mathbb{E}_0[P({\bf i}^1, \dots, {\bf i}^8)]$ does not equal
zero. When this is the case, we have that
$|{\bf i}^\omega \cap {\bf i}^{w+1}| = d$ for $w = 1, 3, 5, 7$, and then
the four sets
${\bf i}^1 \cap {\bf i}^2, {\bf i}^3 \cap {\bf i}^4, {\bf i}^5 \cap
{\bf i}^6, {\bf i}^7 \cap {\bf i}^8$ are disjoint and
$\mathbb{E}_0[P({\bf i}^1, \dots, {\bf i}^8)] = (\zeta_d^h)^4$.

As a result, when $|\cup_{w = 1}^8 {\bf i}^\omega| = 8k - 4d$, $\mathbb{E}_0[P({\bf i}^1, \dots,  {\bf i}^8)]$ is only nonzero with value $(\zeta_d^h)^4$ for
\begin{multline*}
 {n \choose 8k -4d}  {8k - 4d \choose 2k - d, 2k - d, 2k - d, 2k - d} {2k - d \choose d}^4 {2k - 2d\choose k -d}^4 = \\
\frac{n!}{(n - 8k + 4d)!( (k -d)!)^8 (d!)^4}
\end{multline*}
choices of $({\bf i}^1, \dots, {\bf i}^8)$.  This count is obtained as
follows.  First, pick $8k - 4d$ indices from the set $\{1, \dots, n\}$,
and note that there are ${8k - 4d \choose 2k - d, 2k - d, 2k - d, 2k -
d }$ ways of partitioning the $8k - 4d$ indices into the four sets
${\bf i}^1 \cap {\bf i}^2, {\bf i}^3 \cap {\bf i}^4, {\bf i}^5 \cap
{\bf i}^6, {\bf i}^7 \cap {\bf i}^8$. For each $w \in \{1, 3, 5, 7\}$,
there are ${2k - d\choose d}$ choices for the $d$ shared common indices in ${\bf
  i}^w\cap {\bf i}^{w+1}$, and there are ${2k - 2d\choose k -d}$ ways
of distributing the remaining $2k -2d$ indices to ${\bf i}^\omega$ and ${\bf
  i}^{w+1}$. Since the count of the summands $\mathbb{E}_0[P({\bf i}^1, \dots, {\bf
  i}^8)]$ with $|\cup_{w = 1}^8 {\bf i}^\omega| < 8k - 4d$ is of the order
$O(n^{8k - 4d -1})$, we find from \eqref{sum_of_Ps} that
\begin{align*}
&\mathbb{E}_0\left[  \left(U_h^{(13)}\right)^2
  \left(U_h^{(23)}\right)^2 \left(U_h^{(14)}\right)^2
  \left(U_h^{(24)}\right)^2 \right] 
\\
&= {n \choose k}^{-8} \left(\frac{(\zeta_d^h)^4n!}{(n - 8k + 4d)!( (k -d)!)^8 (d!)^4}  + O\left(n^{8k - 4d -1}\right)\right)\\
&= {k \choose d}^8\frac{(d! \zeta^h_d)^4}{  n^{4d} } + O\left(n^{-4d -1}\right).
\end{align*}
This concludes the proof of~(\ref{eq:orderofrightprod}).
\end{proof}

\begin{lemma} \label{lem:4prod_lem2}
Suppose $1 \leq p , q, l, u\leq m$ are four distinct indices, and $h$
is a kernel of order of degeneracy $d$ satisfying
\assumpref{sp_assume} under $H_0$. Then 
\[
\mathbb{E}_0\left[ W_h^{(pl)}W_h^{(ql)}W_h^{(pu)}W_h^{(qu)} \right] = O(n^{-4d -1}).
\]
\end{lemma}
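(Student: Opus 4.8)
The plan is to mirror the expansion in the proof of \lemref{4prod_lem1}, since the four statistics $W_h^{(pl)},W_h^{(ql)},W_h^{(pu)},W_h^{(qu)}$ carry precisely the variable-pair pattern $(pl),(ql),(pu),(qu)$ that the four squared U-statistics carry there. Taking $(p,q,l,u)=(1,2,3,4)$ without loss of generality and substituting the definitions \eqref{hw_kern} and \eqref{W_stat_unscaled}, I would write
\[
\mathbb{E}_0\left[W_h^{(13)}W_h^{(23)}W_h^{(14)}W_h^{(24)}\right]
={n\choose 2k}^{-4}{2k\choose k}^{-4}\sum\mathbb{E}_0\left[\prod_{\omega=1}^{8}h\left({\bf R}^{(p^\omega q^\omega)}_{{\bf t}^\omega}\right)\right],
\]
where the outer sum is over ${\bf i}^1,\dots,{\bf i}^4\in\mathcal{P}(n,2k)$ and subsets ${\bf j}^\omega\subset{\bf i}^\omega$ in $\mathcal{P}(n,k)$, and the eight $k$-tuples are ${\bf t}^{2\omega-1}={\bf j}^\omega$, ${\bf t}^{2\omega}={\bf i}^\omega\setminus{\bf j}^\omega$, with variable labels $(13)$ for $\omega\in\{1,2\}$, $(23)$ for $\omega\in\{3,4\}$, $(14)$ for $\omega\in\{5,6\}$ and $(24)$ for $\omega\in\{7,8\}$. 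The one feature that distinguishes this from \lemref{4prod_lem1} is the built-in disjointness ${\bf t}^{2\omega-1}\cap{\bf t}^{2\omega}=\emptyset$ coming from the set difference in \eqref{hw_kern}, and this is exactly what I would exploit.

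Next I would apply \lemref{tech_lem}$(ii)$: every summand with $|\cup_\omega{\bf t}^\omega|>8k-4d$ vanishes, and a summand with $|\cup_\omega{\bf t}^\omega|=8k-4d$ can be nonzero only if each ${\bf t}^\omega$ meets the union of the remaining seven tuples in exactly $d$ indices and every index of $\cup_\omega{\bf t}^\omega$ has multiplicity $1$ or $2$. The heart of the argument is to show that, in contrast to \lemref{4prod_lem1}, \emph{no} boundary summand with $|\cup_\omega{\bf t}^\omega|=8k-4d$ survives. Fixing such a summand, consider the factor $h({\bf R}^{(13)}_{{\bf t}^1})$: conditioning on ${\bf X}^{(1)}_{\bf a}$ and ${\bf X}^{(3)}_{\bf b}$, where ${\bf a}$ and ${\bf b}$ collect the indices where ${\bf t}^1$ meets the other variable-$1$ and variable-$3$ tuples, renders it independent of the remaining factors, so \assumpref{sp_assume} forces $|{\bf a}|\ge d$ and $|{\bf b}|\ge d$ for a nonzero contribution. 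Because the same-pair partner ${\bf t}^2$ is disjoint from ${\bf t}^1$, the variable-$1$ overlap can only come from the $(14)$-block ${\bf i}^3={\bf t}^5\cup{\bf t}^6$ and the variable-$3$ overlap only from the $(23)$-block ${\bf i}^2={\bf t}^3\cup{\bf t}^4$; that is, $|{\bf t}^1\cap{\bf i}^3|\ge d$ and $|{\bf t}^1\cap{\bf i}^2|\ge d$.

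The contradiction --- which I expect to be the one genuinely delicate step --- then follows from an inclusion--exclusion count. Since $|{\bf t}^1\cap(\cup_{\omega\ne1}{\bf t}^\omega)|=d$ at the boundary value and both ${\bf i}^2$ and ${\bf i}^3$ lie in $\cup_{\omega\ne1}{\bf t}^\omega$,
\[
d\ge\big|({\bf t}^1\cap{\bf i}^2)\cup({\bf t}^1\cap{\bf i}^3)\big|=|{\bf t}^1\cap{\bf i}^2|+|{\bf t}^1\cap{\bf i}^3|-|{\bf t}^1\cap{\bf i}^2\cap{\bf i}^3|\ge 2d-|{\bf t}^1\cap{\bf i}^2\cap{\bf i}^3|,
\]
so $|{\bf t}^1\cap{\bf i}^2\cap{\bf i}^3|\ge d\ge1$. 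Any index in this triple intersection lies in ${\bf t}^1$, in one of ${\bf t}^3,{\bf t}^4$, and in one of ${\bf t}^5,{\bf t}^6$, hence occurs in at least three of the eight tuples, contradicting the multiplicity bound of \lemref{tech_lem}$(ii)$. Thus all boundary summands vanish and the surviving summands satisfy $|\cup_\omega{\bf t}^\omega|\le 8k-4d-1$. There are $O(n^{8k-4d-1})$ such configurations, each bounded since $h$ is bounded, against the normalization ${n\choose 2k}^{-4}{2k\choose k}^{-4}\asymp n^{-8k}$, which yields $\mathbb{E}_0[W_h^{(pl)}W_h^{(ql)}W_h^{(pu)}W_h^{(qu)}]=O(n^{-4d-1})$, as claimed. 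The structural moral is that the disjointness inside each $W_h$ destroys the single ``double-duty'' overlap --- a same-pair intersection of size $d$ serving both variable directions at multiplicity $2$ --- that produced the surviving leading term of order $n^{-4d}$ in \lemref{4prod_lem1}.
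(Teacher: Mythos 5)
Your proof is correct and takes essentially the same route as the paper's: the same expansion into eight $k$-tuples, the same appeal to \lemref{tech_lem}$(ii)$ to discard configurations with $|\cup_\omega {\bf t}^\omega| > 8k-4d$, and the same conditional-independence argument combined with \assumpref{sp_assume} to show every boundary summand with $|\cup_\omega {\bf t}^\omega| = 8k-4d$ vanishes, followed by the identical $O(n^{8k-4d-1})$ count against the $\asymp n^{-8k}$ normalization. The only difference is organizational: where the paper directly asserts that one of $\tilde{\bf i}^1\cap{\bf i}^2$ and $\tilde{\bf i}^1\cap{\bf i}^3$ must have cardinality below $d$ (both being subsets of the $d$-element set $\tilde{\bf i}^1\cap(\cup_{\omega\ge 2}{\bf i}^\omega)$ under the multiplicity bound $f_m\le 2$), you reach the same conclusion by contradiction through an explicit inclusion--exclusion computation.
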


\begin{proof}
Again, without loss of generality, we prove the result for $(p,q, l ,u
) = (1, 2, 3, 4)$. Given ${\bf i}^\omega \in \mathcal{P}(n, 2k)$,
$\omega = 1, \dots, 4$, we define 
\begin{align} \label{Q_function}
Q({\bf i}^1, \dots, {\bf i}^4)  
  &=h^W\left({\bf R}^{(13)}_{{\bf i}^1}\right)h^W\left({\bf R}^{(2
    3)}_{{\bf i}^2}\right)h^W\left({\bf R}^{(14)}_{{\bf
    i}^3}\right)h^W\left({\bf R}^{(24)}_{{\bf i}^4}\right)\\ \nonumber
&= 
{2k \choose k}^{-4} \sum_{ \substack{\tilde{{\bf i}}^\omega \subset {\bf i}^\omega\\ |\tilde{{\bf i}}^\omega| = k }} h^{(13)}_{{\bf i}^1, \tilde{{\bf i}}^1}  \cdot h^{(23)}_{{\bf i}^2, \tilde{{\bf i}}^2} \cdot h^{(14)}_{{\bf i}^3, \tilde{{\bf i}}^3} \cdot h^{(24)}_{{\bf i}^4, \tilde{{\bf i}}^4},
 \end{align}
where 
$
h^{(pq)}_{{\bf i}^w, \tilde{{\bf i}^\omega}}:= h\left({\bf R}^{(pq)}_{\tilde{ \bf i}^\omega}\right) h\left({\bf R}^{(pq)}_{{\bf i}^\omega \setminus\tilde{\bf i}^\omega}\right).
$
 By the definition from \eqref{W_stat_unscaled}, on expansion,
\begin{align} 
\mathbb{E}_0 \left[W_h^{(13)}W_h^{(23)}W_h^{(14)}W_h^{(24)}\right]
  \notag 
&= \frac{1}{{n \choose 2k}^{4}} \sum_{ \substack{\mathbf{i}^w \in\mathcal{P}(n, 2k), \\  1 \leq w  \leq 4}} \mathbb{E}_0 \left[Q\left({\bf i}^1, {\bf i}^2, {\bf i}^3, {\bf i}^4\right) \right] \notag\\
 &= \frac{1}{\left({n \choose 2k} {2k \choose k}\right)^{4}}\sum_{ \substack{\mathbf{i}^w \in\mathcal{P}(n, 2k), \\  \omega = 1, \dots, 4}}
\sum_{ \substack{\tilde{{\bf i}}^\omega \subset {\bf i}^\omega\\ |\tilde{{\bf i}}^\omega| = k }}\mathbb{E}_0 \left[  h^{(13)}_{{\bf i}^1, \tilde{{\bf i}}^1}  \cdot h^{(23)}_{{\bf i}^2, \tilde{{\bf i}}^2} \cdot h^{(14)}_{{\bf i}^3, \tilde{{\bf i}}^3} \cdot h^{(24)}_{{\bf i}^4, \tilde{{\bf i}}^4} \right].
\label{com_inner_summand}
\end{align}
It now suffices to show that 
\begin{equation}\label{ep_weird}
\mathbb{E}_0 \left[  h^{(13)}_{{\bf i}^1, \tilde{{\bf i}}^1}  \cdot h^{(23)}_{{\bf i}^2, \tilde{{\bf i}}^2} \cdot h^{(14)}_{{\bf i}^3, \tilde{{\bf i}}^3} \cdot h^{(24)}_{{\bf i}^4, \tilde{{\bf i}}^4} \right] = 0
\end{equation}
whenever $|\cup_{\omega = 1}^4{\bf i}^\omega| \geq 8k - 4d$, because
then the right hand side of \eqref{com_inner_summand} is of the order
${n \choose 2k}^{-4} {n \choose 8k - 4d - 1} = O(n^{- 4d - 1})$.

The value of a term 
\begin{multline} \label{eight_prod_form} h^{(13)}_{{\bf i}^1,
    \tilde{{\bf i}}^1} \cdot h^{(23)}_{{\bf i}^2, \tilde{{\bf i}}^2}
  \cdot h^{(14)}_{{\bf i}^3, \tilde{{\bf i}}^3} \cdot h^{(2
    4)}_{{\bf i}^4, \tilde{{\bf i}}^4} = h\left({\bf R}^{(1
      3)}_{\tilde{\bf i}^1}\right)h\left({\bf R}^{(13)}_{{\bf i}^1
      \setminus \tilde{\bf i}^1}\right) \cdots\cdots h\left({\bf
      R}^{(24)}_{\tilde{\bf i}^4}\right)h\left({\bf R}^{(2
      4)}_{{\bf i}^4 \setminus \tilde{\bf i}^4}\right) ,\end{multline}
depends on the multi set $(\cup_{\omega = 1}^4 {\bf i}^\omega, f_m )$,
where
$f_m : \cup_{\omega = 1}^4 {\bf i}^\omega \longrightarrow \mathbb{N}$
is the multiplicity function with $f_m(i)$ equal to the number of occurrences of
$i$ among the eight tuples
\begin{equation} \label{eight_tuples}
\tilde{\bf i}^1, {\bf i}^1 \setminus \tilde{\bf i}^1, \dots, 
\tilde{\bf i}^4, {\bf i}^4 \setminus \tilde{\bf i}^4
\;\in\; \mathcal{P}(n, k),
\end{equation}
and $\sum_{i \in \cup_{\omega = 1}^4 {\bf   i}^\omega} f_m(i) = 8k$. If
$|\cup_{\omega = 1}^4 {\bf   i}^\omega| = |\cup_{\omega = 1}^4
(\tilde{\bf   i}^\omega) \cup ({\bf i}^\omega\setminus \tilde{\bf
  i}^\omega)| > 8k - 4d$, by \lemref{tech_lem}$(ii)$, $\mathbb{E}_0 [
h^{(13)}_{{\bf i}^1, \tilde{{\bf i}}^1}  \cdot h^{(23)}_{{\bf
    i}^2, \tilde{{\bf i}}^2} \cdot h^{(14)}_{{\bf i}^3, \tilde{{\bf
      i}}^3} \cdot h^{(24)}_{{\bf i}^4, \tilde{{\bf i}}^4} ] = 0$.
We are left with the case $|\cup_{\omega = 1}^4 {\bf   i}^\omega| =  8k -
4d$.

If $|\cup_{\omega = 1}^4{\bf i}^\omega| = 8k - 4d$, then
\lemref{tech_lem}$(ii)$ yields that for $\mathbb{E}_0 [  h^{(13)}_{{\bf i}^1,
  \tilde{{\bf i}}^1}  \cdot h^{(23)}_{{\bf i}^2, \tilde{{\bf i}}^2}
\cdot h^{(14)}_{{\bf i}^3, \tilde{{\bf i}}^3} \cdot h^{(24)}_{{\bf
    i}^4, \tilde{{\bf i}}^4} ] $ to be non-zero, it is necessary (but
not sufficient, as seen below) that each of the eight tuples in
\eqref{eight_tuples} intersects with the union of the other seven at
exactly $d$ elements, with  $f_m(i) \leq 2$ for all $i \in \cup_{\omega = 1}^4{\bf i}^\omega$. In particular, since $\tilde{\bf i}^1$ is disjoint from ${\bf i}^1\setminus \tilde{\bf i}^1$, it is the case that 
\begin{equation} \label{intersect_rest_6}
|\tilde{\bf i}^1 \cap (\cup_{\omega= 2}^4 {\bf i}^\omega)| = d.
\end{equation}
When conditioning on ${\bf X}^{(3)}_{\tilde{\bf i}^1 \cap {\bf i}^2}$
and ${\bf X}^{(1)}_{\tilde{\bf i}^1 \cap {\bf i}^3}$, it is seen that
$h({\bf R}^{(13)}_{\tilde{\bf i}^1})$ is independent of the other
multiplicative factors on the right hand side of
\eqref{eight_prod_form}. Note that since $f_m$ is always less than or
equal to $2$, by \eqref{intersect_rest_6} one of
$\tilde{\bf i}^1 \cap {\bf i}^2$ and $\tilde{\bf i}^1 \cap {\bf i}^3$ must
have cardinality less than $d$. Hence, by \assumpref{sp_assume} we
have that
\[
\mathbb{E}_0\left[ h({\bf R}^{(13)}_{\tilde{\bf i}^1})\middle|{\bf X}^{(3)}_{\tilde{\bf i}^1 \cap {\bf i}^2}, {\bf X}^{(1)}_{\tilde{\bf i}^1 \cap {\bf i}^3}\right] = 0,
\]
and the aforementioned conditional independence yields
the claim from~(\ref{ep_weird}).
 \end{proof}

\begin{lemma} \label{lem:4prod_lem3} Suppose
  $1 \leq p , q, l, u\leq m$ are four distinct indices, and $h$ is a
  kernel of order of degeneracy $d$ satisfying \assumpref{sp_assume}
  under $H_0$. Then
  \[
\mathbb{E}_0\left[
    U_h^{(pl)}U_h^{(ql)}U_h^{(pu)}U_h^{(qu)}
  \right]=O(n^{-2d -1}).
  \]
\end{lemma}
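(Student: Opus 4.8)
The plan is to follow the template of \lemref{4prod_lem1} and \lemref{4prod_lem2}, but to exploit the fact that here only four kernel factors appear, so that \lemref{tech_lem}$(i)$ (rather than $(ii)$) governs the combinatorics. Without loss of generality I would take $(p,q,l,u) = (1,2,3,4)$ and expand
\[
\mathbb{E}_0\left[U_h^{(13)}U_h^{(23)}U_h^{(14)}U_h^{(24)}\right] = {n \choose k}^{-4} \sum_{\substack{{\bf i}^\omega \in \mathcal{P}(n,k) \\ 1 \leq \omega \leq 4}} \mathbb{E}_0\left[h\left({\bf R}^{(13)}_{{\bf i}^1}\right)h\left({\bf R}^{(23)}_{{\bf i}^2}\right)h\left({\bf R}^{(14)}_{{\bf i}^3}\right)h\left({\bf R}^{(24)}_{{\bf i}^4}\right)\right],
\]
so that the four factors carry the variable pairs $(1,3)$, $(2,3)$, $(1,4)$, $(2,4)$ in a cyclic pattern. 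By \lemref{tech_lem}$(i)$ every summand with $|\cup_{\omega=1}^4 {\bf i}^\omega| > 4k - 2d$ vanishes, so a priori the nonzero terms have union at most $4k - 2d$; since there are $O(n^{4k-2d})$ such tuples and ${n \choose k}^{-4} \asymp n^{-4k}$, this alone yields only $O(n^{-2d})$. The whole point is therefore to gain one more power of $n$ by showing that the top layer $|\cup_{\omega=1}^4 {\bf i}^\omega| = 4k - 2d$ contributes nothing.

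The main obstacle, and the heart of the argument, is precisely this vanishing of the top layer, and it is where the cyclic variable structure is decisive. I would argue as follows. On the top layer, \lemref{tech_lem}$(i)$ forces every nonzero summand to satisfy $|{\bf i}^\omega \cap (\cup_{\omega' \neq \omega}{\bf i}^{\omega'})| = d$ for each $\omega$, together with multiplicities $f_m \leq 2$. On the other hand, consider the factor $h({\bf R}^{(13)}_{{\bf i}^1})$: among the other three factors, variable $1$ occurs only in ${\bf i}^3$ and variable $3$ only in ${\bf i}^2$, so conditioning on ${\bf X}^{(1)}_{{\bf i}^1 \cap {\bf i}^3}$ and ${\bf X}^{(3)}_{{\bf i}^1 \cap {\bf i}^2}$ renders this factor independent of the product of the other three. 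If $\min(|{\bf i}^1 \cap {\bf i}^3|, |{\bf i}^1 \cap {\bf i}^2|) < d$, then \assumpref{sp_assume} makes the conditional expectation of $h({\bf R}^{(13)}_{{\bf i}^1})$ vanish, killing the summand. Hence a nonzero summand forces $|{\bf i}^1 \cap {\bf i}^3| \geq d$ and $|{\bf i}^1 \cap {\bf i}^2| \geq d$ simultaneously; but with $f_m \leq 2$ these two intersections are disjoint within ${\bf i}^1$, so $|{\bf i}^1 \cap (\cup_{\omega' \neq 1}{\bf i}^{\omega'})| \geq 2d > d$, contradicting the budget $d$ dictated by \lemref{tech_lem}$(i)$. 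This is exactly where the present lemma differs from \lemref{4prod_lem1}: there the eight factors came in same-variable-pair twins, so a single neighbour could absorb the sharing demanded by both variables and the top layer survived (to be cancelled against $\mu_h^4$); here each of the four factors carries a distinct variable pair and must share with two distinct neighbours, overflowing the budget.

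Having ruled out the top layer, I would conclude that every nonzero summand has $|\cup_{\omega=1}^4 {\bf i}^\omega| \leq 4k - 2d - 1$. The number of such index tuples is $O(n^{4k - 2d - 1})$, each summand is bounded because $h$ is bounded, and multiplying by ${n \choose k}^{-4} \asymp n^{-4k}$ gives the claimed bound $O(n^{-2d-1})$.
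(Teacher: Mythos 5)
Your proposal is correct and takes essentially the same route as the paper's proof: expand into $\binom{n}{k}^{-4}\sum \mathbb{E}_0\left[B({\bf i}^1,\dots,{\bf i}^4)\right]$, eliminate all summands with $|\cup_{\omega}{\bf i}^\omega| > 4k-2d$ by \lemref{tech_lem}$(i)$, show the layer $|\cup_{\omega}{\bf i}^\omega| = 4k-2d$ also vanishes via \lemref{tech_lem}$(i)$ together with \assumpref{sp_assume}, and then count the remaining $O(n^{4k-2d-1})$ bounded summands. The paper only sketches the top-layer vanishing step, and your conditioning argument (nonvanishing forces both $|{\bf i}^1\cap{\bf i}^2|\geq d$ and $|{\bf i}^1\cap{\bf i}^3|\geq d$, which with $f_m\leq 2$ are disjoint and so overflow the budget $|{\bf i}^1\cap(\cup_{\omega'\neq 1}{\bf i}^{\omega'})|=d$) supplies exactly the detail the paper omits.
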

\begin{proof}
  The proof uses similar counting techniques to that of
  \lemsref{4prod_lem1} and \lemssref{4prod_lem2} and is only
  simpler. We only sketch the argument.  Without loss of generality,
  let $(p, q, l, u) = (1, 2, 3, 4)$. On expansion, by defining
  $B({\bf i}^1, \dots, {\bf i}^4) := h\left({\bf R}^{(13)}_{{\bf
        i}^1}\right)h\left({\bf R}^{(2 3)}_{{\bf
        i}^2}\right)h\left({\bf R}^{(14)}_{{\bf
        i}^3}\right)h\left({\bf R}^{(24)}_{{\bf i}^4}\right)$,
\begin{equation} 
\mathbb{E}_0\left[  U_h^{(13)} U_h^{(23)} U_h^{(14)}U_h^{(24)} \right] ={n \choose k}^{-4} \sum_{ \substack{\mathbf{i}^\omega \in\mathcal{P}(n, k) \\  1 \leq \omega  \leq 4}}
\mathbb{E}_0\left[B({\bf i}^1, \dots, {\bf i}^4) \right].  \label{sum_of_Ps2}
\end{equation}
By \lemref{tech_lem}$(i)$,
$\mathbb{E}_0\left[B({\bf i}^1, \dots, {\bf i}^4) \right] = 0 $ if
$|\cup_{\omega = 1}^4 {\bf i}^\omega| > 4k - 2d$. When
$|\cup_{\omega = 1}^4 {\bf i}^\omega| = 4k - 2d$, one can also show
$\mathbb{E}_0\left[P({\bf i}^1, \dots, {\bf i}^4) \right] = 0$ by
using \lemref{tech_lem}$(i)$ and the property of the kernel given by
\assumpref{sp_assume}. Hence, there are at most
$O(n^{4k - 2d - 1})$ summands on the right hand side of
\eqref{sum_of_Ps2} and we conclude that
$\mathbb{E}_0\left[ U_h^{(13)} U_h^{(23)} U_h^{(14)}U_h^{(24)} \right]
={n \choose k}^{-4} O(n^{4k - 2d - 1}) = O(n^{-2d - 1})$.
\end{proof}

\section{Proofs for \secref{rank-cor}}
\label{sec:proof-rank-cor}

\begin{proof}[Proof of \lemref{general_properties}]
  Claim $(i)$ holds because the independence of
  $\mathbf{X}^{(1)}, \dots, \mathbf{X}^{(m)}$ implies that the rank
  vectors $\mathbf{R}^{(1)}, \dots, \mathbf{R}^{(m)}$ are i.i.d.  For
  assertion $(ii)$, note that, by the permutation symmetry of $g$ in
  its $n$ arguments, $g^{(pq)}$ is a function of the antirank of
  $\mathbf{X}^{(q)}$ in relation to $\mathbf{X}^{(p)}$
  \citep[p.~63]{SidakAndHajek}.  These antiranks, which we denote by
  $\mathbf{R}^{(q)|(p)}$, are uniformly distributed on
  $\mathfrak{S}_n$ for any fixed choice of $\mathbf{X}^{(p)}$, which
  yields the independence of $g^{(pq)}$ and $\mathbf{X}^{(p)}$.
  Similarly, $g^{(pq)}$ is independent $\mathbf{X}^{(q)}$.  (Of
  course, $\mathbf{X}^{(p)}$ and $\mathbf{X}^{(q)}$ together determine
  $g^{(pq)}$.)  Claim $(iii)$ holds since the independence of
  $\mathbf{X}^{(1)}, \dots, \mathbf{X}^{(m)}$ implies that the $m-1$
  vectors of antiranks $\mathbf{R}^{(l)|(p)}$ for $p\not=l$ are
  mutually independent.  Finally, the pairwise independence stated in
  $(iv)$ is implied by the independence of
  $\mathbf{X}^{(1)}, \dots, \mathbf{X}^{(m)}$ and $(iii)$.
\end{proof}

\section{Proofs for \secref{main}} \label{sec:PfSec3}

\begin{proof}[Proof of \lemref{moment_lem}]
  It remains to prove claim $(iii)$ about the fourth moment of
  $U_h^{(pq)}$ when the kernel $h$ has its order of degeneracy $d$
  equal to $1$ or $2$ under $H_0$.  Without loss of generality, we can
  assume $(p, q) = (1, 2)$.  The fourth moment can be written as
  \begin{align}
    \mathbb{E}_0\left[ \left(U^{(12)}_h\right)^4\right]  
    &=   {n \choose k}^{-4} \sum_{ \mathbf{i}^1, \mathbf{i}^2,
      \mathbf{i}^3, \mathbf{i}^4 \in\mathcal{P}(n, k)} \mathbb{E}_0
      \left[ \prod_{\omega = 
      1}^4h\left(\mathbf{R}^{(1 2)}_{\mathbf{i}^\omega, 1},
      \dots,\mathbf{R}^{(1 2)}_{\mathbf{i}^\omega, k} \right)
      \right] \label{fourth_moment}. 
  \end{align}
 The value of each summand $\mathbb{E}_0
      \big[ \prod_{\omega = 
      1}^4h(\mathbf{R}^{(1 2)}_{\mathbf{i}^\omega} ) \big]$ in
 \eqref{fourth_moment} depends on the 
  multiset $(\cup_{\omega = 1}^4 \mathbf{i}^\omega, f_m)$ with 
\begin{equation}\label{multicardin}
\sum_{i \in \cup_{\omega = 1}^4 {\bf i}^\omega}   f_m(i) = 4k;
\end{equation}
we use the multiset notation introduced in the first paragraph of
Appendix~\ref{sec:techproof}.

By \lemref{tech_lem}$(i)$, we have
$\mathbb{E}_0 \big[ \prod_{\omega = 1}^4h(\mathbf{R}^{(1
  2)}_{\mathbf{i}^\omega} ) \big] = 0$
if $\left|\cup_{\omega = 1}^4 \mathbf{i}^\omega \right| > 4k - 2d$. If
$\left|\cup_{\omega = 1}^4 \mathbf{i}^\omega \right| < 4k - 2d$, there
are at most ${n \choose 4k - 2d -1}$ choices for the set
$\cup_{\omega = 1}^4 \mathbf{i}^\omega$.  Since $h$ is bounded, it
thus holds that
\[
{n \choose k}^{-4} \sum_{ 
\substack{\mathbf{i}^1, \mathbf{i}^2,
      \mathbf{i}^3, \mathbf{i}^4 \in\mathcal{P}(n, k)\\
|\cup_{\omega = 1}^4 \mathbf{i}^\omega |< 4k - 2d}} \mathbb{E}_0
      \left[ \prod_{\omega = 
      1}^4h\left(\mathbf{R}^{(1 2)}_{\mathbf{i}^\omega}\right) \right] = O(n^{-2d- 1}).
\]
Therefore, to complete the proof, it suffices to show that 
\begin{multline}\label{just_right_order}
{n \choose k}^{-4}  \sum_{ 
\substack{\mathbf{i}^1, \mathbf{i}^2,
      \mathbf{i}^3, \mathbf{i}^4 \in\mathcal{P}(n, k)\\
|\cup_{\omega = 1}^4 \mathbf{i}^\omega |=  4k - 2d}}  \mathbb{E}_0
      \left[ \prod_{\omega = 
      1}^4h\left(\mathbf{R}^{(1 2)}_{\mathbf{i}^\omega}\right) \right] 
 = 
  \begin{cases}
    \frac{3k^{4}(\zeta^h_1)^2}{n^2} + O(n^{-3}) &\text{if} \quad d = 1,\\
      {k \choose 2}^4\frac{12}{n^{4}}\left((\zeta_2^h)^2+ 
 4\eta^h\right)  + O(n^{-5})&\text{if} \quad d = 2.
  \end{cases}
\end{multline}

By \lemref{tech_lem}$(i)$, when
$|\cup_{\omega = 1}^4 \mathbf{i}^\omega |= 4k - 2d$, a summand
$\mathbb{E}_0 \big[ \prod_{\omega = 1}^4h(\mathbf{R}^{(1
  2)}_{\mathbf{i}^\omega}) \big]$
on the left hand side of \eqref{just_right_order} is non-zero only if
\begin{equation} \label{nec_con_nonzero}
|{\bf i}^{\omega} \cap (\cup_{\omega' \not = \omega} {\bf i}^{\omega'})| = d\text{ for all } \omega = 1, \dots, 4.
\end{equation}
For both $d = 1$ and $d  = 2$, \eqref{nec_con_nonzero} is true when the set
  $\{1,2,3,4\}$ can be partitioned into two disjoint sets $\Omega_1$ and
  $\Omega_2$ such that 
\begin{equation} \label{gen_partition}
|\Omega_1|=|\Omega_2|=2 \quad \text{and} \quad 
 |\cap_{\omega \in \Omega_1} \mathbf{i}^\omega|=|\cap_{\omega\in \Omega_2} \mathbf{i}^\omega|=d,
\end{equation}
  in which case
  $(\cup_{\omega\in \Omega_1} \mathbf{i}^\omega)\cap(\cup_{\omega\in \Omega_2}
  \mathbf{i}^\omega)=\emptyset$ and, by independence,
  \begin{equation}
    \label{fourth-moment-zeta1}
    \mathbb{E}_0 \left[ \prod_{\omega = 1}^4\left(h\left(\mathbf{R}^{(1
            2)}_{\mathbf{i}^\omega} \right)  \right)\right]  
    = \\
    \prod_{j=1}^2\;\mathbb{E}_0 \left[ \prod_{\omega\in
        \Omega_j}\left(h\left(\mathbf{R}^{(12)}_{\mathbf{i}^{\omega}} \right) 
      \right)\right] 
    \;=\; (\zeta_d^h)^2. 
  \end{equation}
  Next, we count how many summands on the left hand side of
  \eqref{just_right_order} have their indices
  ${\bf i}^1, \dots,{\bf i}^4$ satisfying the constellation in
  \eqref{gen_partition}. There are $\binom{n}{4k - 2d}$ choices for
  the set $\cup_{\omega = 1}^4 \mathbf{i}^\omega$.  Then there are
  $\frac{1}{2}{4k - 2d \choose 2k-d}$ partitions of
  $\cup_{\omega = 1}^4 \mathbf{i}^\omega$ into two subsets of equal
  cardinality.  Each of these subsets with cardinality $2k- d$ is to
  be split into two subsets that have $d$ elements in common.  We have
  ${2k - d} \choose d$ choices for this common element, and there are
  $\frac{1}{2} {2k - 2d \choose k -d}$ ways of partitioning the
  remaining elements to form the two subsets.  In the above counting
  process, no ordering is taken into account.  Hence, the number
  of summands in \eqref{fourth_moment} whose indices
  ${\bf i}^1, \dots, {\bf i}^4$ satisfy
  \eqref{gen_partition} is
  \begin{equation}
  4!{n\choose 4k - 2d}\frac{1}{2}{4k - 2d \choose 2k-d}
   \left[{2k - d\choose d}\frac{1}{2}{2k - 2d \choose k -d}
  \right]^2 =  \frac{3n!}{(n - 4k
    +2d)!\left[ d! \left((k-d)!\right)^2 \right]^2}.\label{fourth-moment-count}
  \end{equation}

  When $d = 1$, for any four tuples
  ${\bf i}^1, \dots, {\bf i}^4 \in \mathcal{P}(n, k)$ with
  $|\cup_{\omega = 1}^4 \mathbf{i}^\omega |= 4k - 2d = 4k - 2$,
  \eqref{nec_con_nonzero} is only satisfied when they can be described
  by the constellation in \eqref{gen_partition}. Since
 \begin{align} \label{easylimit}
   {n \choose
      k}^{-4} \frac{3n!}{(n - 4k
    +2d)!\left[ d! \left((k-d)!\right)^2 \right]^2} \;=\;
   {k \choose d}^4 \frac{ 3(d!)^2 }{n^{2d}}+O\left(n^{-2d -1}\right),
 \end{align}
 by \eqref{fourth-moment-zeta1} and \eqref{fourth-moment-count}, we
 have proved the equality in \eqref{just_right_order} for $d = 1$.

 When $d = 2$, in addition to \eqref{gen_partition}, there is another
 constellation for ${\bf i}^1, \dots, {\bf i}^1 \in \mathcal{P}(n, k)$
 that satisfies the condition in \eqref{nec_con_nonzero} subject to
 $|\cup_{\omega = 1}^4 \mathbf{i}^\omega |= 4k - 2d = 4k - 4$. If, up
 to relabeling of superscripts $\{1, \dots, 4\}$ for
 ${\bf i}^1, \dots, {\bf i}^4$, the multiset
 $(\cup_{\omega =1}^4 {\bf i}^\omega, f_m)$ is such that
\begin{align} \label{multiset1}
&|{\bf i}^1  \cap {\bf i}^2| =  |{\bf i}^2  \cap {\bf i}^3| = |{\bf i}^3  \cap {\bf i}^4| = |{\bf i}^4  \cap {\bf i}^1| =1 \quad \text{and}\\
 \label{multiset2}
&f_m(i) = 
  \begin{cases}
    2 \quad \text{if }  i \text{ belongs to any one of } {\bf i}^1  \cap {\bf i}^2, {\bf i}^2  \cap {\bf i}^3, {\bf i}^3  \cap {\bf i}^4 \text{ or } {\bf i}^4  \cap {\bf i}^1,\\
    1 \quad \text{otherwise},
  \end{cases}
\end{align} 
then \eqref{nec_con_nonzero} is satisfied with 
\begin{equation} \label{eta}
\mathbb{E}_0 \left[ \prod_{\omega = 1}^4\left(h\left(\mathbf{R}^{(1
            2)}_{\mathbf{i}^\omega} \right)  \right)\right]  
    = \eta^h.
\end{equation}
We will conclude the proof of \eqref{just_right_order} for $d = 2$ by showing there are 
\begin{equation}\label{countforlockedtuples}
3 \cdot 4! \cdot {n \choose 4k - 4} {4k - 4 \choose 4} {4k - 8 \choose k -2, k -2, k-2, k-2}
= \frac{3n!}{(n - 4k + 4)! ((k -2)!)^4}
\end{equation}
choices of ${\bf i}^1, \dots, {\bf i}^4$ that satisfy \eqref{multiset1} and \eqref{multiset2}, possibly after relabeling of their superscripts. If so, since $ {n \choose 4}^{-4}\frac{3n!}{(n - 4k + 4)! ((k -2)!)^4} = {k \choose 2}^4 \frac{48}{n^4} + O(n^{-5})$, combining \eqref{easylimit} with the summand values \eqref{fourth-moment-zeta1} and \eqref{eta}, we have shown that for $d = 2$, the left hand side of \eqref{just_right_order} equals
\[
 {k \choose 2}^4 \frac{ 3 {(2!)}^2 }{n^{4}}  (\zeta_d^h)^2 + {k \choose 2}^4 \frac{48}{n^4} \eta^h + O(n^{-5}) = {k \choose 2}^4\frac{12}{n^{4}}\left\{(\zeta_2^h)^2+ 
 4\eta^h\right\}  + O(n^{-5}).
\]

It remains to show the count in \eqref{countforlockedtuples}. First,
we count how many such constellations there are \emph{without} any
relabeling of superscripts. Given each of the ${n \choose 4k - 4}$
choice for the set $\cup_{\omega = 1}^4 \mathbf{i}^\omega$, there are
$4! {4k - 4 \choose 4}$ ways of picking the disjoint singleton sets
$({\bf i}^1 \cap {\bf i}^2)$, $({\bf i}^2 \cap {\bf i}^3)$,
$({\bf i}^3 \cap {\bf i}^4)$ and $({\bf i}^4 \cap {\bf i}^1)$. Now
there are ${4k - 8 \choose k-2, k-2, k-2, k-2 }$ ways to partition the
remaining $4k - 8$ elements of the set
$\cup_{\omega = 1}^4 \mathbf{i}^\omega$ into the four sets
${\bf i}^1 \setminus ({\bf i}^2 \cup {\bf i}^4) $,
${\bf i}^2 \setminus ({\bf i}^1 \cup {\bf i}^3) $ ,
${\bf i}^3 \setminus ({\bf i}^2 \cup {\bf i}^4) $ and
${\bf i}^4 \setminus ({\bf i}^1 \cup {\bf i}^3) $. Hence, there are
\[
4 \cdot {n \choose 4k - 4} {4k - 4 \choose 4} {4k - 8 \choose k -2, k -2, k-2, k-2}
\]
choices of ${\bf i}^1, \dots, {\bf i}^4$ that satisfy
\eqref{multiset1} and \eqref{multiset2} without having to relabel
their superscripts. To obtain the factor of $3$ in
\eqref{countforlockedtuples}, we note that the constellation of
${\bf i}^1, \dots, {\bf i}^4$ described by \eqref{multiset1} and
\eqref{multiset2} is such that ${\bf i}^1$ intersects with ${\bf i}^2$
and ${\bf i}^4$. Alternatively, ${\bf i}^1$ can intersect with
${\bf i}^3$ and ${\bf i}^4$, or ${\bf i}^2$ and ${\bf i}^3$, to give a
constellation satisfying \eqref{multiset1} and \eqref{multiset2} after
relabeling of index superscripts.
\end{proof}

\begin{proof}[Proof of \lemref{degen_relate}]
As in the proof of \lemref{moment_lem}, without loss of generality, we
assume $(p, q) = (1,2)$. For any given ${\bf i}, {\bf j} \in
\mathcal{P}(n, 2k)$, 
\begin{multline}\label{hw_expand}
\mathbb{E}_0\left[h^W\left({\bf R}^{(12)}_{\bf i}\right)h^W\left({\bf R}^{(12)}_{\bf j}\right)\right]\\
= {2k \choose k}^{-2} \sum_{\substack{{\bf i}^1 \subset {\bf i} \\ |{\bf i}^1| = k  }}  \sum_{\substack{{\bf j}^1 \subset {\bf j} \\ |{\bf j}^1| = k }} \mathbb{E}_0\left[ h\left({\bf R}^{(12)}_{{\bf i}^1}\right) h\left({\bf R}^{(12)}_{{\bf i}\setminus{{\bf i}^1}}\right) h\left({\bf R}^{(12)}_{{\bf j}^1}\right) h\left({\bf R}^{(12)}_{ {\bf j} \setminus {\bf j}^1}\right)\right].
\end{multline}
Since ${\bf i}^1$, ${\bf i}\setminus {\bf i}^1$, ${\bf j}^1$ and
${\bf j}\setminus {\bf j}^1$ are tuples in $\mathcal{P}(n, k)$, if
$|{\bf i} \cap {\bf j}| <2d$, or equivalently
$|{\bf i} \cup {\bf j}| > 4k - 2d$, by \lemref{tech_lem}$(i)$, all
summands on the right hand side of \eqref{hw_expand} equal zero, and thus
$\mathbb{E}_0\left[h^W\left({\bf R}^{(pq)}_{\bf i}\right)h^W\left({\bf
      R}^{(pq)}_{\bf j}\right)\right] = 0$.

Suppose $|{\bf i} \cap {\bf j}| = 2d$. If ${\bf i}^1, {\bf j}^1 \in \mathcal{P}(n, k)$ are such that ${\bf i}^1 \subset {\bf i}$ and ${\bf j}^1 \subset {\bf j}$, we define ${\bf i}^2  = {\bf i} \setminus {\bf i}^1$ and ${\bf j}^2  = {\bf j} \setminus {\bf j}^1$ to simplify notation. If 
\begin{equation} \label{i1cond1}
|{\bf i}^1 \cap {\bf j}^1 | = d \quad  \text{and} \quad |{\bf i}^2 \cap {\bf j}^2 | = d,
\end{equation}
then the necessary condition in \lemref{tech_lem}$(i)$ is satisfied. Since ${\bf i}^1 \cup {\bf j}^1$ and ${\bf i}^2 \cup {\bf j}^2$ are disjoint, independence gives
\begin{equation} \label{zetasq}
\mathbb{E}_0\left[ h\left({\bf R}^{(12)}_{{\bf i}^1}\right) h\left({\bf R}^{(12)}_{{\bf i}^2}\right) h\left({\bf R}^{(12)}_{{\bf j}^1}\right) h\left({\bf R}^{(12)}_{ {\bf j}^2}\right)\right] = (\zeta_d^h)^2.
\end{equation}
Similarly, if 
\begin{equation} \label{i1cond2}
|{\bf i}^1 \cap {\bf j}^2 | = d \quad  \text{and} \quad |{\bf i}^2 \cap {\bf j}^1 | = d,
\end{equation}
then \eqref{zetasq} holds too.

Now we give the count for how many combinations of ${\bf i}^1$ and
${\bf j}^1$ satisfy \eqref{i1cond1}.  Since
$|{\bf i }\cap {\bf j}| = 2d$, there are ${2d \choose d}$ choices for
the set ${\bf i}^1 \cap {\bf j}^1$, which determines
${\bf i}^2 \cap {\bf j}^2$.  For each such choice, there are then
${2k - 2d \choose k -d}$ choices for each of
${\bf i}^1 \setminus ({\bf i}^1 \cap {\bf j}^1)$ and
${\bf j}^2 \setminus ({\bf i}^2 \cap {\bf j}^2)$, which
determine ${\bf i}^2$ and ${\bf j}^2$. Hence, there are
${2d \choose d}{2k - 2d \choose k -d}^2$ choices of
$({\bf i}^1, {\bf j}^1)$ satisfying \eqref{i1cond1}.  
Analogously, there are also
${2d \choose d}{2k - 2d \choose k -d}^2$ choices of
$({\bf i}^1, {\bf j}^1)$ satisfying \eqref{i1cond2}.  In total, there are
\begin{equation} \label{count1}
2{2d \choose d}{2k - 2d \choose k -d}^2
\end{equation}
summands in \eqref{hw_expand} with the value $(\zeta_d^h)^2$.

If $d = 1$, then no constellations for ${\bf i}^1$ and ${\bf i}^2$
other than the ones given by \eqref{i1cond1} and \eqref{i1cond2} yield
a non-zero value for
$\mathbb{E}_0[ h({\bf R}^{(12)}_{{\bf i}^1}) h({\bf R}^{(12)}_{{\bf
    i}^2}) h({\bf R}^{(12)}_{{\bf j}^1}) h({\bf R}^{(12)}_{ {\bf
    j}^2})]$.
Therefore, we deduce from \eqref{hw_expand} that, for $d = 1$,
 \[
\zeta_{2d}^{h^W} = 2{2d \choose d}{2k - 2d \choose k -d}^2 {2k  \choose k }^{-2} (\zeta_1^h)^2 = 4 {2k - 2 \choose k -1}^2 {2k\choose k}^{-2} (\zeta_1^h)^2.
\]

It remains to prove the formula for $\zeta_{2d}^{h^W}$ when $d =
2$.
In this case, besides \eqref{i1cond1} and \eqref{i1cond2}, there is
one other constellation for ${\bf i}^1$, ${\bf i}^2$, ${\bf j}^1$,
${\bf j}^2$ so that the necessary condition in \lemref{tech_lem}$(i)$
is satisfied. If the multiset
$({\bf i}^1\cup{\bf j}^1\cup {\bf i}^2\cup{\bf j}^2, f_m)$ is such
that
\begin{align} 
&|{\bf i}^1  \cap {\bf j}^1| =
  |{\bf j}^1  \cap {\bf i}^2| =
 |{\bf i}^2  \cap {\bf j}^2| = 
|{\bf j}^2  \cap {\bf i}^1| 
=1 \quad \text{and}
\\
&f_m(i) = 
  \begin{cases}
    2 \quad \text{if }  i \text{ belongs to any one of } 
{\bf i}^1  \cap {\bf j}^1, 
{\bf j}^1  \cap {\bf i}^2, 
{\bf i}^2  \cap {\bf j}^2
\text{ or }
{\bf j}^2  \cap {\bf i}^1,\\
    1 \quad \text{otherwise},
  \end{cases}
\end{align} 
then
\begin{equation} 
\mathbb{E}_0\left[ h\left({\bf R}^{(12)}_{{\bf i}^1}\right) h\left({\bf R}^{(12)}_{{\bf i}^2}\right) h\left({\bf R}^{(12)}_{{\bf j}^1}\right) h\left({\bf R}^{(12)}_{ {\bf j}^2}\right)\right] = \eta^h.
\end{equation}
Now we count: For a fixed pair $({\bf i}, {\bf j})$ such that
$|{\bf i} \cap {\bf j}| = 4$, there are $4!$ choices for the
singletons ${\bf i}^1 \cap {\bf j}^1$, ${\bf j}^1 \cap {\bf i}^2$ ,
${\bf i}^2 \cap {\bf j}^2$ and ${\bf j}^2 \cap {\bf i}^1$. Given each
such choice for these singletons, there are ${2k -4 \choose k - 2}$
choices for each one of ${\bf i}^1$ and ${\bf j}^1$, hence there are
\[
4! {2k -4 \choose k - 2}^2
\]
summands on the right hand side of \eqref{hw_expand} with the value
$\eta^h$. Combining with the count \eqref{count1} for summands with
the value $(\zeta_d^h)^2$, we conclude that if $d=2$ then
\begin{align*}
\zeta^{h^W}_{2d} &= {2k \choose k}^{-2} \left\{2{2d \choose d}{2k - 2d \choose k -d}^2 (\zeta_d^h)^2 + 4! {2k -4 \choose k - 2}^2 \eta^h \right\}\\
&= 12 {2k -4 \choose k - 2}^2 {2k \choose k}^{-2} \left[ (\zeta_2^h)^2
  + 2 \eta^h\right].
\qedhere
\end{align*}
\end{proof}

\section{Proofs for \secref{asympt-null-distr}} \label{sec:PfSec4}

Here, we prove \lemsref{ell2-convergence} and \lemssref{lyapunov} that
were used in the proof of \thmref{main}.

\begin{lemma}
  \label{lem:ell2-convergence}
  The martingale differences from~(\ref{eq:mtg-diffs}) satisfy
  the $L^2$ convergences
  \begin{align}
    \label{L2cond1}
    \mathbb{E}_0\left[\left(  \frac{n^2}{m^2}\sum_{l =
          2}^m\mathbb{E}_0\left[(D^S_{nl})^2\middle|\mathcal{F}_{n, l
            -1}\right] -     k^4  ( \zeta^h_1)^2\right)^2\right] &\longrightarrow 0 \;, \\
    \label{L2cond2}
    \mathbb{E}_0\left[\left( \frac{n^2}{m^2} \sum_{l =
          2}^m\mathbb{E}_0\left[(D^T_{nl})^2\middle|\mathcal{F}_{n, l
            -1}\right] -     k^4  (\zeta^h_1)^2\right)^2
    \right] &\longrightarrow 0\;,\\
\label{L2cond5}
    \mathbb{E}_0\left[\left(  \frac{n}{  m^2} \sum_{l = 2}^m \mathbb{E}_0\left[(D^Z_{nl})^2\middle| \mathcal{F}_{n, l
        -1}\right]  - \frac{k^2 \zeta_1^h}{2}\right)^2
    \right]    &\longrightarrow 0 \quad   
  \end{align} 
when $d =1 $, and the $L^2$ convergences
  \begin{align}
    \label{L2cond3}
    \mathbb{E}_0\left[\left(   \frac{n^4}{m^2} \sum_{l =
          2}^m \mathbb{E}_0\left[(D^S_{nl})^2\middle|\mathcal{F}_{n, l
            -1}\right] -     4 {k \choose 2}^4 \left\{ (\zeta^h_2)^2 + 6 \eta^h  \right\}\right)^2\right] &\longrightarrow 0 \;, \\
    \label{L2cond4}
    \mathbb{E}_0\left[\left(  \frac{n^4}{m^2} \sum_{l =
          2}^m \mathbb{E}_0\left[(D^T_{nl})^2\middle|\mathcal{F}_{n, l
            -1}\right] -     4 {k \choose 2}^4 \left\{
    (\zeta^h_2)^2 + 2 \eta^h  \right\}\right)^2   \right]         &\longrightarrow 0 \;,\\
\label{L2cond6}
\mathbb{E}_0\left[\left( \frac{n^2}{m^2}\sum_{l = 2}^m \mathbb{E}_0\left[(D^Z_{nl})^2\middle| \mathcal{F}_{n, l
        -1}\right]  -    {k \choose 2}^2 \zeta^h_2 \right)^2   \right]  &\longrightarrow 0
  \end{align} 
when $d = 2$.
\end{lemma}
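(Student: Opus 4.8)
The plan is to handle all six limits by one device: the $L^2$ convergence of a sequence to a constant $c$ is equivalent to its expectation tending to $c$ and its variance tending to $0$. So for each statistic I would (a) identify the target constant as the limiting mean of the normalized conditional-variance sum, and (b) show that the variance of this sum vanishes. I will describe the argument for $D^S_{nl}$ with $d=1$; the other five cases have identical structure and differ only in which moment or fourth-product lemma is used.

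For the mean, write $V_l:=\mathbb{E}_0[(D^S_{nl})^2\mid\mathcal F_{n,l-1}]=\sum_{p,p'=1}^{l-1}\mathbb{E}_0[\bar U^{(pl)}_h\bar U^{(p'l)}_h\mid\mathcal F_{n,l-1}]$. By the tower property $\mathbb{E}_0[V_l]=\mathbb{E}_0[(D^S_{nl})^2]=\sum_{p,p'}\mathbb{E}_0[\bar U^{(pl)}_h\bar U^{(p'l)}_h]$, and for $p\neq p'$ the variables $\bar U^{(pl)}_h,\bar U^{(p'l)}_h$ are independent with mean zero by \lemref{general_properties}$(iii)$, so only the $l-1$ diagonal terms survive, each equal to $\mathrm{Var}_0[(U^{(pl)}_h)^2]=\tfrac{2k^4(\zeta_1^h)^2}{n^2}+O(n^{-3})$ by \lemref{moment_lem}$(i)$ and $(iii)$. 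Summing over $l$ produces $\binom{m}{2}$ diagonal terms, whence $\tfrac{n^2}{m^2}\sum_l\mathbb{E}_0[V_l]\to k^4(\zeta_1^h)^2$, the asserted limit. For $T$ the per-term second moment $\mathbb{E}_0[(W^{(pl)}_h)^2]$ is obtained from \lemref{moment_lem}$(i)$ applied to the degree-$2k$ kernel $h^W$ together with \lemref{degen_relate}, and for $Z$ it is $\mu_h$ from \lemref{moment_lem}$(i)$; each reproduces the stated constant.

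The substance is the variance bound. The key simplification is that $\mathbb{E}_0[\bar U^{(pl)}_h\bar U^{(p'l)}_h\mid\mathcal F_{n,l-1}]$ depends only on $(\mathbf X^{(p)},\mathbf X^{(p')})$ and not on $l$, since column $l$ is integrated out; call it $g(\mathbf X^{(p)},\mathbf X^{(p')})$ for $p\neq p'$. Moreover the diagonal contributions $\mathbb{E}_0[(\bar U^{(pl)}_h)^2\mid\mathbf X^{(p)}]$ are deterministic constants, because $\bar U^{(pl)}_h$ is independent of $\mathbf X^{(p)}$ by \lemref{general_properties}$(ii)$; hence the diagonal part of $V_l$ is nonrandom and contributes nothing to the variance. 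Thus $\sum_l V_l$ has the same variance as the weighted sum $\sum_{p\neq p'}(m-\max(p,p'))\,g(\mathbf X^{(p)},\mathbf X^{(p')})$ over the i.i.d.\ columns. This kernel is degenerate in the Hoeffding sense: $\mathbb{E}_0[g(\mathbf X^{(p)},\mathbf X^{(p')})\mid\mathbf X^{(p)}]=0$, which follows because $\mathbb{E}_0[\bar U^{(p'l)}_h\mid\mathbf X^{(l)}]=0$ (again \lemref{general_properties}$(ii)$) lets one project $\bar U^{(p'l)}_h$ away. Consequently every covariance $\mathrm{Cov}_0[g(\mathbf X^{(p)},\mathbf X^{(p')}),g(\mathbf X^{(r)},\mathbf X^{(r')})]$ with $|\{p,p'\}\cap\{r,r'\}|\le 1$ vanishes, leaving only the fully overlapping terms $\{p,p'\}=\{r,r'\}$.

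These surviving terms are precisely the crossing four-fold products the technical lemmas were built for: conditioning on $(\mathbf X^{(p)},\mathbf X^{(p')})$ gives $\mathbb{E}_0[g^2]=\mathbb{E}_0[\bar U^{(pl)}_h\bar U^{(p'l)}_h\bar U^{(pl')}_h\bar U^{(p'l')}_h]=O(n^{-4d-1})$ by \lemref{4prod_lem1}. With $O(m^2)$ index pairs and weights of order $m$, the variance of $\sum_l V_l$ is $O(m^4n^{-4d-1})$, so after multiplication by $n^4/m^4$ it is $O(n^{-1})\to0$, yielding \eqref{L2cond1}; the same three steps give \eqref{L2cond3} (the $d=2$ case of $S$), again via \lemref{4prod_lem1}, give \eqref{L2cond2} and \eqref{L2cond4} via \lemref{4prod_lem2} (the $W$-products), and give \eqref{L2cond5} and \eqref{L2cond6} via \lemref{4prod_lem3} (the $U$-products), the only changes being the per-term exponent ($n^{-2d-1}$ for $Z$) and the matching normalization. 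I expect the main obstacle to be the variance step, and within it the verification that the single-column projection of $g$ vanishes: this degeneracy is what suppresses the otherwise dominant $O(m^4n^{-4d})$ contribution and leaves a residual controlled exactly by the crossing-product lemmas, after which confirming that the normalization exponents align in all six cases is routine bookkeeping.
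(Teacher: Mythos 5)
Your proposal is correct and follows the paper's own proof in all essential respects: the same reduction of $L^2$ convergence to convergence of means plus vanishing variances, the same computation of the means (diagonal terms only, via \lemref{moment_lem}$(i)$, $(iii)$ and \lemref{degen_relate} for the $W$-statistics), the same observation that the diagonal conditional expectations are deterministic by \lemref{general_properties}$(ii)$, the same $l$-free conditional expectation $g(\mathbf{X}^{(p)},\mathbf{X}^{(p')})$ (the paper's $C^{(pq)}$), the same identification of $\mathbb{E}_0[g^2]$ with a crossing four-fold product over two fresh columns controlled by \lemsref{4prod_lem1}--\lemssref{4prod_lem3}, and the same $O(m^4 n^{-4d-1})$ (resp.\ $O(m^4n^{-2d-1})$ for $Z$) counting that yields $O(n^{-1})$ after normalization. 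The one genuine divergence is the mechanism by which cross-covariances $\mathrm{Cov}_0\left[g(\mathbf{X}^{(p)},\mathbf{X}^{(p')}),\, g(\mathbf{X}^{(r)},\mathbf{X}^{(r')})\right]$ with $\{p,p'\}\neq\{r,r'\}$ are killed: the paper notes that $C^{(pq)}$ is a symmetric function of $(\mathbf{R}^{(pq)}_1,\dots,\mathbf{R}^{(pq)}_n)$ and invokes the rank-specific pairwise independence of \lemref{general_properties}$(iv)$, whereas you establish the Hoeffding-type degeneracy $\mathbb{E}_0\left[g(\mathbf{X}^{(p)},\mathbf{X}^{(p')})\,\middle|\,\mathbf{X}^{(p)}\right]=0$ (correctly: condition on $(\mathbf{X}^{(p)},\mathbf{X}^{(l)})$, factor out $\bar U^{(pl)}_h$, and use \lemref{general_properties}$(ii)$ together with $\mathbb{E}_0[\bar U^{(p'l)}_h]=0$) and then use conditional independence given the single shared column. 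Both arguments are valid and yield the same weighted sum-of-variances identity; the paper's is immediate from its Lemma~\ref{lem:general_properties} infrastructure, while yours is marginally more general in that it relies only on the mean-zero projection property rather than on the full independence of rank statistics across overlapping variable pairs, which is a feature special to ranks under $H_0$.
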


\begin{proof}
When $d =1$, for the $L^2$ convergences in \eqref{L2cond1}, \eqref{L2cond2} and \eqref{L2cond5}, it is sufficient to show that, as $m, n \longrightarrow \infty$,

\begin{equation}
\label{expconv1}
\begin{split}
 &\frac{n^2}{m^2}\sum_{l = 2}^m\mathbb{E}_0[(D^S_{nl})^2], \ \ \frac{n^2}{m^2} \sum_{l =2}^m\mathbb{E}_0[(D^T_{nl})^2]  \longrightarrow  k^4 (\zeta_1^h)^2, \\
&\frac{n}{  m^2} \sum_{l = 2}^m \mathbb{E}_0\left[(D^Z_{nl})^2\right]  \longrightarrow  \frac{k^2 \zeta^h_1}{2},
\qquad and
\end{split}
\end{equation}

\begin{equation}
\begin{aligned}\label{varconv1}
  &\text{Var}_0\left[ \frac{n^2}{m^2} \sum_{l =
          2}^m\mathbb{E}_0[(D^S_{nl})^2|\mathcal{F}_{n, l
    -1}]\right] \longrightarrow 0, \\
&\text{Var}_0\left[ \frac{n^2}{m^2}\sum_{l =
          2}^m\mathbb{E}_0[(D^T_{nl})^2|\mathcal{F}_{n, l
    -1}]\right] \longrightarrow 0, \\
& \text{Var}_0\left[ \frac{n}{m^2}\sum_{l =
          2}^m\mathbb{E}_0[(D^Z_{nl})^2|\mathcal{F}_{n, l
    -1}]\right] \longrightarrow 0.\\
\end{aligned}
\end{equation}
When $d = 2$, for the $L^2$ convergences in \eqref{L2cond3} and \eqref{L2cond4}, it suffices to show that, as $m, n \longrightarrow \infty$, 
  \begin{align}
    \label{expconv2a}
\begin{split}
 &\sum_{l =
          2}^m \frac{n^4}{m^2}\mathbb{E}_0[(D^S_{nl})^2] 
    \; \longrightarrow \;   4 {k \choose 2}^4 \left\{ (\zeta^h_2)^2 + 6 \eta^h  \right\},\\
 &\sum_{l =
          2}^m \frac{n^4}{m^2}\mathbb{E}_0[(D^T_{nl})^2] 
    \; \longrightarrow \; 4 {k \choose 2}^4 \left\{ (\zeta^h_2)^2 + 2 \eta^h  \right\}, \\
&\sum_{l = 2}^m \frac{n^2}{m^2}\mathbb{E}_0\left[(D^Z_{nl})^2\right]  \longrightarrow  
 {k \choose 2}^2 \zeta^h_2 ,
\qquad and
\end{split}
\end{align}

\begin{equation}
\begin{aligned} \label{varconv2}
  &\text{Var}_0\left[\frac{n^4}{m^2}\sum_{l =
          2}^m \mathbb{E}_0 [(D^S_{nl})^2|\mathcal{F}_{n, l
    -1}]\right]  \longrightarrow 0,\\
&\text{Var}_0\left[\frac{n^4}{m^2}\sum_{l =
          2}^m \mathbb{E}_0[(D^T_{nl})^2|\mathcal{F}_{n, l
    -1}]\right]  \longrightarrow 0,\\
& \text{Var}_0\left[ \frac{n^2}{m^2}\sum_{l =
          2}^m\mathbb{E}_0[(D^Z_{nl})^2|\mathcal{F}_{n, l
    -1}]\right] \longrightarrow 0.\\
\end{aligned}
\end{equation}
%
We will first show the convergences of expectations in
\eqref{expconv1} and \eqref{expconv2a}. Suppose $d
= 1$ or $2$ is the order of degeneracy of $h$ under $H_0$.   By \lemref{general_properties}(i)
  and (iii), the terms $\bar U^{(pl)}_h$ that are summed to form
  $D_{nl}^S$ are i.i.d.~such that
  \begin{equation*} 
  \frac{n^{2d }}{m^2}  \mathbb{E}_0[(D_{nl}^S)^2] \;=\;   
     \frac{n^{2d }}{m^2}\sum_{p=1}^{l -1}
    \mathrm{Var}_0\left[\bar{U}^{(pl)}_h\right] 
    \;=\; \frac{n^{2d }}{m^2} (l-1)\mathrm{Var}_0\left[\bar{U}^{(12)}_h\right].
  \end{equation*}
 It follows that
\begin{equation}  \label{exp_diff_sq_S}
  \frac{n^{2d  }}{m^2}\sum_{l = 2}^m \mathbb{E}_0[(D_{nl}^S)^2] =  \frac{(m-1)n^{2d  }}{2 m }\mathrm{Var}_0\left[\bar{U}^{(12)}_h\right]
  .
  \end{equation}
Similarly, by \lemref{general_properties}(i)
  and (iii), we have that
\begin{align}  \label{exp_diff_sq_T}
 \frac{n^{2d }}{m^2}\sum_{l = 2}^m \mathbb{E}_0[(D_{nl}^T)^2] &= \frac{(m-1)n^{2d }}{2m }\mathrm{Var}_0\left[W^{(12)}_h\right]
\quad \text{and}\\
\label{exp_diff_U}
\frac{n^d}{m^2} \sum_{l = 2}^m \mathbb{E}_0\left[(D^Z_{nl})^2\right]&= \frac{(m-1)n^{d }}{2m }\mathrm{Var}_0\left[U^{(12)}_h\right].
  \end{align}
By \lemref{moment_lem}$(i)$ and $(iii)$,
   \begin{align}
\mathrm{Var}_0\left[\bar{U}^{(12)}_h\right] &=  
\mathbb{E}_0\left[\left(
    U^{(12)}_h  \right)^4 \right]- \mu_h^2 \notag \\
&=
  \begin{cases}
    \frac{2k^{4}(\zeta^h_1)^2}{n^2} + O\left(n^{-3}\right) &\text{if} \quad d = 1,\\
      \frac{8}{n^{4}}  {k \choose 2}^4\{(\zeta_2^h)^2+ 
6\eta^h \}+
 O\left(n^{-5}\right) &\text{if} \quad d = 2.
  \end{cases}
 \label{VarUbar}
   \end{align}
Since $W^{(12)}_h$ is a rank-based U-statistic with the induced kernel function $h^W$ of degree $2k$, via \lemref{degen_relate}, \lemref{moment_lem}$(i)$ applies to give 
\begin{align}
 \mathrm{Var}_0\left[W^{(12)}_h\right] &= \mathbb{E}_0\left[\left(
    W^{(12)}_h  \right)^2 \right] = {2k \choose 2d}^2 \frac{(2d)!}{n^{2d}} \zeta_{2d}^{h^W} + O(n^{-2d- 1}) \notag\\
&= 
  \begin{cases}
     \frac{2 k^4 (\zeta_1^h)^2}{n^2}+ O\left(n^{-3}\right) &\text{if} \quad d = 1,\\
 \frac{8}{n^4} {k \choose 2}^4\{(\zeta^h_2)^2 + 2 \eta^h\} + O(n^{-5}) &\text{if} \quad d = 2.
  \end{cases}
                                                                         \label{VarW}
\end{align}
Moreover, \lemref{moment_lem}$(i)$ yields that
\begin{equation} \label{VarU}
\text{Var}_0 \left[ \left(U^{(12)}_h\right)^2\right] = \mathbb{E}_0 \left[ \left(U^{(12)}_h\right)^2\right]= 
  \begin{cases}
     \frac{k^2 \zeta_1^h}{n}+ O\left(n^{-2}\right) &\text{if} \quad d = 1,\\
 \frac{2\zeta_2^h}{n^2} {k \choose 2}^2 + O(n^{-3}) &\text{if} \quad d = 2.
  \end{cases}
\end{equation}
Plugging \eqref{VarUbar}, \eqref{VarW} and \eqref{VarU} into \eqref{exp_diff_sq_S}
, \eqref{exp_diff_sq_T} and \eqref{exp_diff_U} for $d = 1$ and $d =2$, respectively, and
taking the limit, we obtain the convergences in \eqref{expconv1} and \eqref{expconv2a}.

Next, we show that the variances in \eqref{varconv1} and
\eqref{varconv2} converges to zero. For $d\in\{1,2\}$, write
  \begin{align*}
   &\frac{n^{2d}}{m^2} \sum_{l = 2}^m\mathbb{E}_0\left[(D^S)^2_{nl}|\mathcal{F}_{n, l -1}\right]\\
    &= \frac{n^{2d}}{m^2}  \left \{\sum_{l=2}^m\sum_{p=1}^{l -1}
      \mathbb{E}_0\left[\left(\bar U^{(pl)}_h\right)^2 \middle|
      \mathcal{F}_{n, l-1}\right] +
      2\sum_{l=3}^m\sum_{1\le p<q< l}\mathbb{E}_0\left[
      \bar U^{(pl)}_h\bar U^{(ql)}_h \middle| \mathcal{F}_{n,
      l-1}\right] \right\}, 
  \end{align*}
  and notice that the first sum on the right-hand side is a constant
  because, by  \lemref{general_properties}(ii),
  \[
  \mathbb{E}_0\left[\left(\bar{U}^{(pl)}_h\right)^2 \middle|
    \mathcal{F}_{n, l-1}\right] =
  \mathbb{E}_0\left[\left(\bar{U}^{(pl)}_h\right)^2 \middle|
    \mathbf{X}^{(p)}\right] = \mathbb{E}_0 \left[
    \left(\bar{U}^{(pl)}_h\right)^2\right]. 
  \] 
  We observe that in order to show
  $\text{Var}_0\left[ \frac{n^{2d}}{m^2} \sum_{l =
      2}^m\mathbb{E}_0[(D^S_{nl})^2|\mathcal{F}_{n, l -1}]\right]
  \longrightarrow 0$, it suffices to show
  \begin{equation}
    \label{eq:lemB1-var-to-show}
    \frac{n^{4d }}{m^4}\mathrm{Var}_0\left[\sum_{l=3}^m\sum_{1\le p<q<l}\mathbb{E}_0\left[
      \bar{U}^{(pl)}_h\bar{U}^{(ql)}_h \middle| \mathcal{F}_{n,
      l-1}\right] \right]\;\longrightarrow\; 0.
  \end{equation}
By exactly analogous arguments, it suffices to show 
  \begin{align}
    \label{eq:lemB2-var-to-show}
    \frac{n^{4d }}{m^4}\mathrm{Var}_0\left[\sum_{l=3}^m\sum_{1\le p<q<l}\mathbb{E}_0\left[
      W^{(pl)}_h W^{(ql)}_h \middle| \mathcal{F}_{n,
      l-1}\right] \right]\;&\longrightarrow\; 0 \quad \text{ and } \\
    \label{eq:lemB3-var-to-show}
    \frac{n^{2d }}{m^4}\mathrm{Var}_0\left[\sum_{l=3}^m\sum_{1\le p<q<l}\mathbb{E}_0\left[
      U^{(pl)}_h U^{(ql)}_h \middle| \mathcal{F}_{n,
      l-1}\right] \right]\;&\longrightarrow\; 0 
  \end{align}
in order to prove $\text{Var}_0\left[ \frac{n^{2d}}{m^2}\sum_{l =
          2}^m\mathbb{E}_0[(D^S_{nl})^2|\mathcal{F}_{n, l
    -1}]\right], \ \text{Var}_0\left[ \frac{n^{d}}{m^2}\sum_{l =
          2}^m\mathbb{E}_0[(D^Z_{nl})^2|\mathcal{F}_{n, l
    -1}]\right] \longrightarrow 0$.
  
  We first prove~(\ref{eq:lemB1-var-to-show}). For $p < q <l$, consider 
  \[
  C^{(pq)} := \mathbb{E}_0\left[ \bar{U}^{(pl)}_h\bar{U}^{(ql)}_h
    \middle|\ \mathcal{F}_{n, l-1}\right] \; =\; \mathbb{E}_0\left[
    \bar{U}^{(pl)}_h\bar{U}^{(ql)}_h \middle| \mathbf{X}^{(p)},
    \mathbf{X}^{(q)}\right],
  \]
  which is a function of $\mathbf{X}^{(p)}$ and $\mathbf{X}^{(q)}$
  alone.  Since
\[
\bar{U}^{(pl)}_h\bar{U}^{(ql)}_h = f({\bf R}^{(p l)}_1, \dots, {\bf R}^{(p l)}_k) f({\bf R}^{(q l)}_1, \dots, {\bf R}^{(q l)}_k)
\]
for a function $f : (\mathbb{R}^2)^k \longrightarrow \mathbb{R}$ that
is permutation symmetric in its $k$ arguments, and since the rank
vectors ${\bf R}^{(p)}$, ${\bf R}^{(q)}$, ${\bf R}^{(l)}$ are
independent and uniformly distributed on $\mathfrak{S}_n$ under $H_0$,
the conditional expectation $C^{(pq)}$ is in fact a function of the
tuple $({\bf R}_1^{(pq)}, \dots, {\bf R}_n^{(pq)})$ that is symmetric
in its $n$ arguments.  Therefore, Lemma~\ref{lem:general_properties}
applies to the collection of $C^{(pq)}$, $1\le p\not= q\le m$.  The
variance in~(\ref{eq:lemB1-var-to-show}) is thus
  \begin{align*}
    \label{eq:lemB1-var-sum-of-vars}
    \mathrm{Var}_0\left[\sum_{l=3}^m\sum_{1\le p<q<l} C^{(pq)} \right] &=
   \sum_{1 \leq p < q \leq m-1} (m - q)^2 \mathrm{Var}_0\left[C^{(pq)}\right]\\
&=  \frac{1}{12} m(m-2)(m-1)^2 \mathrm{Var}_0 \left[ C^{(12)} \right].
  \end{align*}
Now under the asymptotic regime $m, n \longrightarrow \infty$,  (\ref{eq:lemB1-var-to-show}) holds if $\mathrm{Var}_0 \left[ C^{(12)} \right]$ is of order $O(n^{-4d -1})$. 
  
   Suppose $2 < l <u \leq m$.  Then, by definition, 
\[C^{(12)} =
\mathbb{E}_0\left[
    \bar{U}^{(1l)}_h\bar{U}^{(2l)}_h \middle| \mathbf{X}^{(1)},
    \mathbf{X}^{(2)}\right] 
=
\mathbb{E}_0\left[
    \bar{U}^{(1u)}_h\bar{U}^{(2u)}_h \middle| \mathbf{X}^{(1)},
    \mathbf{X}^{(2)}\right].
\]
 It follows that 
\begin{align}
 \mathbb{E}_0\left[ \bar{U}^{(1l)}_h\bar{U}^{(2l)}_h \bar{U}^{(1u)}_h\bar{U}^{(2u)}_h\right] &=  \mathbb{E}_0\left[\mathbb{E}_0\left[ \bar{U}^{(1l)}_h\bar{U}^{(2l)}_h \bar{U}^{(1u)}_h\bar{U}^{(2u)}_h\middle|\mathbf{X}^{(1)},
    \mathbf{X}^{(2)} \right] \right] \notag\\
&= \mathbb{E}_0\left[\mathbb{E}_0\left[
    \bar{U}^{(1l)}_h\bar{U}^{(2l)}_h \middle| \mathbf{X}^{(1)},
    \mathbf{X}^{(2)}\right] \mathbb{E}_0\left[
    \bar{U}^{(1u)}_h\bar{U}^{(2u)}_h \middle| \mathbf{X}^{(1)},
    \mathbf{X}^{(2)}\right]\right]  \label{eq:indep_magic}\\
&= \mathbb{E}_0 \left[\left( C^{(12)} \right)^2 \right] \notag,
\end{align}
where ~(\ref{eq:indep_magic}) follows from independence of ${\bf X}^{(l)}$ and ${\bf X}^{(u)}$. Applying \lemref{4prod_lem1}, we deduce that $\mathbb{E}_0[(C^{(12)})^2]$ is of order $O(n^{-4d -1})$.  This concludes the proof as an application of \lemref{general_properties}$(iii)$ shows that $C^{(12)}$ has mean zero, and thus $\mathrm{Var}_0[C^{(12)}]=\mathbb{E}_0[(C^{(12)})^2]$.

The proof of \eqref{eq:lemB2-var-to-show} and $\eqref{eq:lemB3-var-to-show} $ proceeds line by line as the proof of \eqref{eq:lemB1-var-to-show}, where for all $1 \leq p \not = q \leq m$ we replace $\bar{U}^{(pq)}_h$ by $W^{(pq)}_h$ or $U^{(pq)}_h$, define $C^{(pq)}$ alternatively as 
\[
C^{(pq)} := \mathbb{E}_0\left[ W^{(pl)}_hW^{(ql)}_h
    \middle|\ \mathcal{F}_{n, l-1}\right] \quad \text{or} \quad  C^{(pq)} := \mathbb{E}_0\left[ U^{(pl)}_hU^{(ql)}_h
    \middle|\ \mathcal{F}_{n, l-1}\right]
,\]
and apply \lemref{4prod_lem2} or  \lemref{4prod_lem3}.   We
omit the details. 
\end{proof}

\begin{lemma}
  \label{lem:lyapunov}
  For $d = 1$ or $2$, the martingale differences from~(\ref{eq:mtg-diffs}) satisfy
  the Lyapunov conditions 
  \begin{equation}
    \label{cond2'}
   \frac{n^{4d }}{m^4} \sum_{l = 2}^m \mathbb{E}_0\left[(D^S_{nl})^4 \middle| \mathcal{F}_{n,
        l- 1}\right] 
, \quad 
   \frac{n^{4d }}{m^4}\sum_{l = 2}^m \mathbb{E}_0\left[(D^T_{nl})^4 \middle| \mathcal{F}_{n,
        l- 1}\right] \underset{p}{\longrightarrow } 0 \quad\text{ and}\end{equation}
\begin{equation}
 \frac{n^{2d }}{m^4}\sum_{l = 2}^m \mathbb{E}_0\left[(D^Z_{nl})^4 \middle| \mathcal{F}_{n,
        l- 1}\right] \underset{p}{\longrightarrow } 0
  \end{equation}
as $m, n \longrightarrow \infty$.
\end{lemma}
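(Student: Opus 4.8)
The plan is to reduce each of the three Lyapunov conditions to a statement about \emph{unconditional} fourth moments, and then to a routine moment count. Since each sum appearing in \eqref{cond2'} is nonnegative, it converges to zero in probability as soon as it converges to zero in $L^1$, and by the tower property its $L^1$-norm equals the same expression with the inner conditioning removed. So first I would show, in the $S$-case, that $\frac{n^{4d}}{m^4}\sum_{l=2}^m \mathbb{E}_0[(D^S_{nl})^4]\to 0$, and argue the $T$- and $Z$-cases analogously.

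For the centered statistic, recall $D^S_{nl}=\sum_{p=1}^{l-1}\bar U^{(pl)}_h$. By \lemref{general_properties}$(i)$ and $(iii)$ the summands $\bar U^{(1l)}_h,\dots,\bar U^{(l-1,l)}_h$ are i.i.d., and they are centered since $\mathbb{E}_0[\bar U^{(pl)}_h]=\mathbb{E}_0[(U^{(pl)}_h)^2]-\mu_h=0$. The standard fourth-moment identity for a sum of $l-1$ i.i.d. centered variables then gives
\[
\mathbb{E}_0[(D^S_{nl})^4]=(l-1)\,\mathbb{E}_0\!\left[(\bar U^{(12)}_h)^4\right]+3(l-1)(l-2)\left(\mathbb{E}_0\!\left[(\bar U^{(12)}_h)^2\right]\right)^2.
\]
Next I would bound the two moment factors via \lemref{moment_lem}. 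Writing $\bar U^{(12)}_h=(U^{(12)}_h)^2-\mu_h$, using $\mu_h=O(n^{-d})$ from \lemref{moment_lem}$(i)$ and the bound $\mathbb{E}_0[(U^{(12)}_h)^r]=O(n^{-\lfloor (rd+1)/2\rfloor})$ from \lemref{moment_lem}$(ii)$, I obtain $\mathbb{E}_0[(\bar U^{(12)}_h)^4]=O(n^{-4d})$ and $\mathbb{E}_0[(\bar U^{(12)}_h)^2]=O(n^{-2d})$, whence $\mathbb{E}_0[(D^S_{nl})^4]=O(l^2 n^{-4d})$. Summing, $\frac{n^{4d}}{m^4}\sum_{l=2}^m O(l^2 n^{-4d})=\frac{1}{m^4}O(m^3)=O(m^{-1})\to 0$.

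The $T$- and $Z$-cases run identically; only the moment orders change. For $D^T_{nl}=\sum_{p=1}^{l-1}W^{(pl)}_h$ the summands are again i.i.d. and centered, and since $W^{(pq)}_h$ is the degree-$2k$ U-statistic with kernel $h^W$ of order of degeneracy $2d$ by \lemref{degen_relate}, applying \lemref{moment_lem}$(i)$,$(ii)$ to $h^W$ yields $\mathbb{E}_0[(W^{(12)}_h)^2]=O(n^{-2d})$ and $\mathbb{E}_0[(W^{(12)}_h)^4]=O(n^{-4d})$; hence $\mathbb{E}_0[(D^T_{nl})^4]=O(l^2 n^{-4d})$ and the same $O(m^{-1})$ bound results after multiplying by $n^{4d}/m^4$. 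For $D^Z_{nl}=\sum_{p=1}^{l-1}U^{(pl)}_h$ one has $\mathbb{E}_0[(U^{(12)}_h)^2]=\mu_h=O(n^{-d})$ and $\mathbb{E}_0[(U^{(12)}_h)^4]=O(n^{-2d})$, so $\mathbb{E}_0[(D^Z_{nl})^4]=O(l^2 n^{-2d})$, and the matching factor $n^{2d}/m^4$ again gives $O(m^{-1})\to 0$.

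The only genuinely delicate point—and the step I expect to require the most care—is the bookkeeping of moment orders, in particular verifying $\mathbb{E}_0[(\bar U^{(12)}_h)^4]=O(n^{-4d})$. Because $\bar U$ is a recentered \emph{square} of $U$, this quantity involves moments of $U^{(12)}_h$ up to order eight, and one must confirm that every term of the binomial expansion of $\big((U^{(12)}_h)^2-\mu_h\big)^4$, and not only the leading $\mathbb{E}_0[(U^{(12)}_h)^8]$, is $O(n^{-4d})$; the bound $\mathbb{E}_0[(U^{(12)}_h)^r]=O(n^{-\lfloor(rd+1)/2\rfloor})$ makes this routine once paired with $\mu_h=O(n^{-d})$. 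With the per-term order $O(l^2 n^{-c})$ in hand (with $c=4d$ for the $S$- and $T$-statistics and $c=2d$ for the $Z$-statistic), the rescaling factors in \eqref{cond2'} are precisely those that cancel the powers of $n$, leaving $\sum_{l=2}^m l^2=O(m^3)$ divided by $m^4$, which drives the conditional fourth-moment sums to zero.
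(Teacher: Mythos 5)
Your proposal is correct and follows essentially the same route as the paper's own proof: reduce the conditional sums to unconditional fourth moments via nonnegativity and the tower property, expand $\mathbb{E}_0[(D_{nl})^4]$ using the i.i.d.\ centered structure from \lemref{general_properties}$(i)$ and $(iii)$ (your identity $3(l-1)(l-2)=6\binom{l-1}{2}$ matches the paper's expansion exactly), and then invoke \lemref{moment_lem} — together with \lemref{degen_relate} for the $T$-case — to get the per-term orders $O(l^2 n^{-4d})$ and $O(l^2 n^{-2d})$, yielding $O(m^{-1})\to 0$. The delicate point you flag (bounding $\mathbb{E}_0[(\bar U^{(12)}_h)^4]$ through the binomial expansion paired with $\mu_h=O(n^{-d})$) is handled in the paper in precisely the same way.
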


  \begin{proof}
    Since
    $\sum_{l = 2}^m \mathbb{E}_0[(D^S_{nl})^4 | \mathcal{F}_{n, l-
        1}] $, $\sum_{l = 2}^m \mathbb{E}_0[(D^T_{nl})^4 | \mathcal{F}_{n, l-
        1}] $ and $\sum_{l = 2}^m \mathbb{E}_0\left[(D^Z_{nl})^4 \middle| \mathcal{F}_{n,
        l- 1}\right]$
    are nonnegative random variables, it suffices to show that all three
    expectations converge to zero, that is,
\[
\frac{n^{4d }}{m^4}  \sum_{l = 2}^m \mathbb{E}_0\left[(D^S_{nl})^4 \right], \quad  \frac{n^{4d }}{m^4}\sum_{l = 2}^m \mathbb{E}_0\left[(D^T_{nl})^4 \right], \quad \frac{n^{2d }}{m^4}\sum_{l = 2}^m \mathbb{E}_0\left[(D^Z_{nl})^4 \middle| \mathcal{F}_{n,
        l- 1}\right] \longrightarrow 0.
\]

%

    We first show it for $\frac{n^{4d }}{m^4}  \sum_{l = 2}^m \mathbb{E}_0\left[(D^S_{nl})^4 \right]$. By
    \lemref{general_properties}$(i)$ and $(iii)$, $D_{nl}^S$ is a sum
    of $l-1$ centered i.i.d.~random variables.  On expansion, we
    have that
    \begin{align*}
    \mathbb{E}_0\left[\left(D^S_{nl}\right)^4 \right] 
      &= \sum_{p=1}^{l-1}\mathbb{E}_0\left[\left(
        \bar{U}^{(pl)}_h\right)^4\right] +
        6\sum_{1\le p<q<l} \mathbb{E}_0\left[\left(
        \bar{U}^{(pl)}_h\right)^2\right]\mathbb{E}_0\left[\left(
        \bar{U}^{(ql)}_h\right)^2\right]\\
      &= (l-1) \mathbb{E}_0\left[\left(
        \bar{U}^{(12)}_h\right)^4\right] + 6\binom{l-1}{2}\left(\mathrm{Var}_0\left[
        \bar{U}^{(12)}_h\right]\right)^2 .
    \end{align*}
    It follows that
    \begin{equation}
      \label{lemB2-expansion1}
    \frac{n^{4d }}{m^4} \sum_{l = 2}^m\mathbb{E}_0\left[\left(D^S_{nl}\right)^4 \right]  \;=\;
     \frac{n^{4d }}{m^4}\left\{ \binom{m}{2}\mathbb{E}_0\left[\left(
        \bar{U}^{(12)}_h\right)^4\right] + 6 \binom{m}{3}\left(\mathrm{Var}_0\left[
        \bar{U}^{(12)}_h\right]\right)^2 \right\} . 
    \end{equation}
    Now recall from \eqref{VarUbar} that the variance
    of $\bar{U}^{(12)}_h$ is of order $O(n^{-2d})$.  Furthermore,
    \begin{align*}
      \mathbb{E}_0\left[\left(
          \bar{U}^{(12)}_h\right)^4\right] &=
      \mathbb{E}_0\left[\left(\big(U^{(12)}_h\big)^2- \mu_h
        \right)^4\right] \\ 
      &=  \mathbb{E}_0\left[\left(U^{(12)}_h\right)^8 - 4 \mu_h
        \left(U^{(12)}_h\right)^6 + 6 \mu_h^2\left(U^{(12)}_h\right)^4 -
        4 \mu_h^3 \left(U^{(12)}_h\right)^2 + \mu_h^4\right]  
    \end{align*}
    is of order $O(n^{-4d})$ by \lemref{moment_lem}$(ii)$.  Substituting these into \eqref{lemB2-expansion1} we conclude that \[\frac{n^{4d }}{m^4}\sum_{l = 2}^m \mathbb{E}_0\left[(D^S_{nl})^4 \right] = O(m^{-1})\longrightarrow  0 \quad \text{as} \quad m,n\longrightarrow \infty. \]

The proof for $\frac{n^{4d }}{m^4}  \sum_{l = 2}^m \mathbb{E}_0\left[(D^T_{nl})^4 \right]$ and $\frac{n^{2d }}{m^4}  \sum_{l = 2}^m \mathbb{E}_0\left[(D^Z_{nl})^4 \right]$ is similar. On expansion, we have 
\begin{align}
     \frac{n^{4d }}{m^4} \sum_{l = 2}^m\mathbb{E}_0\left[\left(D^T_{nl}\right)^4 \right]  \;&=\;
   \frac{n^{4d }}{m^4}\left\{{m \choose 2} \mathbb{E}_0\left[\left(
       W^{(12)}_h\right)^4\right] + 6{m \choose 3}\left(\mathbb{E}_0\left[
        \left(W^{(12)}_h\right)^2\right]\right)^2 \right\}  \label{lemB2-expansion2},\\
     \frac{n^{2d }}{m^4} \sum_{l = 2}^m\mathbb{E}_0\left[\left(D^Z_{nl}\right)^4 \right]  \;&=\;
   \frac{n^{2d }}{m^4}\left\{{m \choose 2} \mathbb{E}_0\left[\left(
       U^{(12)}_h\right)^4\right] + 6{m \choose 3}\left(\mathbb{E}_0\left[
        \left(U^{(12)}_h\right)^2\right]\right)^2 \right\} \label{lemB2-expansion3}
\end{align}
by \lemref{general_properties}$(i)$ and $(iii)$. By \lemsref{moment_lem}$(ii)$ and \lemssref{degen_relate}, since $h^W$ has order of degeneracy $2d$, $\mathbb{E}_0\big[(
       W^{(12)}_h)^4\big] $ and $\mathbb{E}_0\big[(
       W^{(12)}_h)^2\big]$ are of order $O(n^{-4d})$ and $O(n^{-2d})$ respectively. Another application of  \lemref{moment_lem}$(ii)$
gives that $\mathbb{E}_0\big[(
       U^{(12)}_h)^4\big]  = O(n^{-2d})$ and $\mathbb{E}_0\big[(
       U^{(12)}_h)^2\big] = O(n^{-d})$. On substituting these into \eqref{lemB2-expansion2} and \eqref{lemB2-expansion3} we get that
both  $\frac{n^{4d }}{m^4} \sum_{l = 2}^m\mathbb{E}_0\left[\left(D^T_{nl}\right)^4 \right]$ and  $\frac{n^{2d }}{m^4} \sum_{l = 2}^m\mathbb{E}_0\left[\left(D^Z_{nl}\right)^4 \right]$ are of order $O(m^{-1})$ and converge to $0$  as $m, n \longrightarrow \infty$.
 \end{proof}

\begin{proof}[Proof of  Corollary~\ref{spearnull}]
  It suffices to show that $\frac{n}{m}(S_\rho - S_{\hat{\rho}}) = o_p(1)$, in
  which case the corollary is implied by the fact that
  $\frac{n}{m}S_{\hat{\rho}} \longrightarrow N(0, 1)$ as given in
  \thmref{main} and the value of $\zeta_1^{h_{\hat{\rho}_s}}$ in \tabref{degen_tab}. By the decomposition in \eqref{rho_decomp}, the
  statistic $S_\rho$ from \eqref{rho_test_stat} may be written as
  \begin{align*}
    S_{\rho}&=  \sum_{1\leq p < q \leq m} \left( \frac{n - 2}{n+1 }  \hat{\rho}^{(pq)}  + \frac{3}{n+1} \tau^{(pq)}\right)^2 - {m \choose 2}\mu_{\rho^2}.
  \end{align*}
  Expanding the square in the summands on the right-hand side, we
  obtain that
  \begin{multline} \label{expan_S_rho}
    S_{\rho}=\left(\frac{n - 2}{n+1}\right)^2 S_{\hat{\rho}} +
    \frac{9}{(n+1)^2} S_{\tau}
    +\frac{6 (n -2)}{(n+1)^2} \sum_{1 \leq p < q \leq
      m}\hat{\rho}^{(pq)} \tau^{(pq)} \\
    + {m \choose 2} \left[\left(
        \frac{n -2}{n+1}\right)^2   \mu_{\hat{\rho}^2}
      +\frac{9}{(n+1)^2} \mu_{\tau^2} - \mu_{\rho^2} \right]; 
  \end{multline}
  recall the definition of $S_{\tau}$ and $S_{\hat{\rho}}$.  Note that since
  $S_\rho$, $S_\tau$ and $S_{\hat{\rho}}$ have mean zero, it holds
  that
  \[
  \mu_{\hat{\rho} \tau} := \mathbb{E}_0\left[ \hat{\rho}^{(pq)} \tau^{(pq)}\right]
  \;=\; \frac{(n+1)^2}{6(n-2)} \left[\mu_{\rho^2} - \left( \frac{n
        -2}{n+1}\right)^2   \mu_{\hat{\rho}^2} -\frac{9}{(n+1)^2}
    \mu_{\tau^2}\right], 
  \]
and hence \eqref{expan_S_rho} can be rewritten as 
\[
   S_{\rho}=\left(\frac{n - 2}{n+1}\right)^2 S_{\hat{\rho}} +
    \frac{9}{(n+1)^2} S_{\tau}
    +  \frac{6 (n -2)}{(n+1)^2} \left[  \sum_{1 \leq p < q \leq
      m}\hat{\rho}^{(pq)} \tau^{(pq)}  - {m \choose 2} \mu_{\hat{\rho}
      \tau} \right] .
\]
  Since $\frac{9n}{m(n+1)^2}S_\tau = o_p(1)$ by \thmref{main}, in order to prove the assertion that $\frac{n}{m}(S_\rho-S_{\hat{\rho}})=o_p(1)$,
  it thus suffices to show that
  \[ 
  \frac{6n (n -2)}{m(n+1)^2} \left[  \sum_{1 \leq p < q \leq
      m}\hat{\rho}^{(pq)} \tau^{(pq)}  - {m \choose 2} \mu_{\hat{\rho}
      \tau} \right] \underset{p}{\longrightarrow} 0. 
  \] 
  We show this by proving convergence to zero in $L^2$, for which
  we need to argue that
  \begin{equation}
    \label{cross_rho_hat_tau}
    \frac{36 n^2(n -2)^2}{m^2(n+1)^4} \mathbb{E}_0 \left[\left\{  \sum_{1 \leq
          p < q \leq m}\hat{\rho}^{(pq)} \tau^{(pq)}  - {m \choose 2}
        \mu_{\hat{\rho} \tau} \right\}^2 \right] \;\longrightarrow\; 0.
  \end{equation}

  Note that \lemref{general_properties} applies to the collection of
  statistics $\hat{\rho}^{(pq)} \tau^{(pq)}$.  By
  \lemref{general_properties}$(i)$ and $(iv)$, the term
  in~(\ref{cross_rho_hat_tau}) equals
  \begin{equation}  
    \frac{18 n^2(n -2)^2(m-1)}{(n+1)^4m} \left\{  \mathbb{E}_0
      \left[ \left( \hat{\rho}^{(12)} \tau^{(12)}\right)^2\right] -
      \mu_{\hat{\rho}
        \tau}^2\right\} \label{cross_l2_norm}. 
  \end{equation}
  Since $\frac{18 n^2(n -2)^2(m-1)}{(n+1)^4m} = O(1)$ as $m, n \longrightarrow \infty$, for the convergence from~(\ref{cross_rho_hat_tau})
  it remains to show that
  \begin{equation*}
    \mathrm{Var}_0\left[ \hat{\rho}^{(12)} \tau^{(12)} \right] =
    \mathbb{E}_0 \left[ \left( \hat{\rho}^{(12)}
        \tau^{(12)}\right)^2\right] - \mu_{\hat{\rho} \tau}^2 
    \;\longrightarrow\; 0.
  \end{equation*} 
  However, using the inequality $2xy\le (x^2+y^2)$,
  we see that 
  \[
  0 \leq \mathrm{Var}_0\left[ \hat{\rho}^{(12)} \tau^{(12)} \right]
  \leq \mathbb{E}_0 \left[ \left( \hat{\rho}^{(12)}
      \tau^{(12)}\right)^2\right] \le
  \frac{1}{2} \mathbb{E}_0 \left[ \left( \hat{\rho}^{(12)}\right)^4\right] +\frac{1}{2} \mathbb{E}_0 \left[ \left( 
      \tau^{(12)}\right)^4\right],
  \]
  which is of order $O(n^{-2})$ by \lemref{moment_lem}$(ii)$.
\end{proof}

\section{Proofs for \secref{power}}
\label{sec:proofs-sec5}

Unlike in other sections, here all the rank-based U-statistics will be
treated as functions of the original data
${\bf X}^{(1)}, \dots, {\bf X}^{(m)}$ in our presentation.

\begin{proof}[Proof of \thmref{beatMax}]
  In this proof, all operators $\mathbb{E}[\cdot]$,
  $\mathrm{Cov}[\cdot]$, $\mathrm{Var}(\cdot)$, $P(\cdot)$ are with
  respect to a general distribution in $\mathcal{D}_m$.

$(i)$: Let ${\bf U}_\tau$ be the ${m \choose 2}$-vector $(
U^{(pq)}_{h_\tau})_{1 \leq p< q \leq m}$. Then ${\bf U}_\tau$ is a
U-statistic taking values in $\mathbb{R}^{m \choose 2}$, with the ${m
  \choose 2}$-dimensional vector-valued kernel 
\[
{\bf h}_\tau ({\bf X}_i, {\bf X}_j)= \left(h_\tau({\bf
    X}^{(pq)}_i,{\bf X}^{(pq)}_j)\right)_{1 \leq p < q \leq m}
\]
of degree $k = 2$.  Here, $i \not = j$ index any pair of
samples.  Note that
${S}_\tau = \|{\bf U}_\tau\|_2^2 - {m \choose 2} \mu_{h_\tau}$, and
based on \thmref{main} we have
under the regime $\frac{m}{n} \longrightarrow \gamma$ that the test $\phi_\alpha( S_\tau)$ rejects $H_0$ when
$\|{\bf U}_\tau\|_2 \geq \sqrt{{m \choose 2} \mu_{\tau} +
  \frac{4m}{9n} z_{1 - \alpha}} = O(\sqrt{n})$.  
Recall that $\mu_\tau = \frac{2(2n +5)}{9 n(n-1)}$ and that the value
of $\zeta^{h_\tau}_1$ is given in \tabref{degen_tab}. By the triangle
inequality
\begin{equation*} \label{someTri}
\|{\bf U}_\tau\|_2 \geq \|  \Theta_\tau \|_2 - \|{\bf U}_\tau - \Theta_\tau\|_2,
\end{equation*}
it suffices to show that as $n \longrightarrow \infty$, uniformly over $\mathcal{D}_m$,
\[
P(\|{\bf U}_\tau - \Theta_\tau\|_2 \geq C\sqrt{n}) \leq 1 - \beta
\]
for some constant $C >0$ that only depends on $\beta$ and $\gamma$.
For any pair $i \not = j$, let
$
{\bf h}_{\tau, 1} ({\bf X}_i) = \mathbb{E}[{\bf h}_\tau ({\bf X}_i, {\bf X}_j)| {\bf X}_i]
$ and define the canonical functions \citep[p.8]{UstatBanach}
\begin{align}
{\bf g}_1 ({\bf X}_i)&:=   {\bf h}_{\tau, 1} ({\bf X}_i)- \Theta , \\{\bf g}_2 ({\bf X}_i , {\bf X}_j) &:=  {\bf h}_\tau ({\bf X}_i, {\bf X}_j)- {\bf h}_{\tau, 1} ({\bf X}_i) - {\bf h}_{\tau, 1} ({\bf X}_j)+ \Theta.
\end{align}
Since the Kendall kernel $h_\tau$ is bounded, $\|{\bf g}_1\|_2^2$ and
$\|{\bf g}_2\|_2^2$ are both less than ${m \choose 2} M$ for a certain
constant $M >0$ that does not depend on $n$ and $m$. Suppose
$d \in \{1, 2\}$ is the order of degeneracy for the kernel
${\bf h}_\tau$. By \citet[Corollary 8.1.7]{UstatBanach}, we have that
for any $t >0$,
\[
P(\|{\bf U}_\tau - \Theta_\tau\|_2 > t) \;\leq\; C_1\exp\left\{- C_2
  n\left(\frac{t^2 }{\lambda^2}\right)^{1/d}\right\}, 
\]
where $C_1, C_2 > 0$ are universal constants and
$\lambda^2 = M {m \choose 2} \sum_{c = 0}^{2 - d} n^{-c} = M {m
  \choose 2} \frac{1 - n^{d - 3}}{1 - n^{-1}}$.
Using the fact that
$\frac{1 - n^{d - 3}}{1 - n^{-1}} \leq \frac{1}{1 - n^{-1}}$ and
letting $t = C\sqrt{n}$ for some $C > 0$, we get
\begin{equation} \label{Banach_final} P(\|{\bf U}_\tau -
  \Theta_\tau\|_2 > C \sqrt{n}) \;\leq\; C_1\exp\left\{- C_2
    \left(\frac{2n(n -1)C^2 }{M m (m -1)}\right)\right\},
\end{equation}
for large enough $n$ as $\frac{m}{n} \longrightarrow \gamma$. The
proof for $(i)$ is completed by picking $C$ large so that the right
hand side of \eqref{Banach_final} is less than $1 - \beta$ as
$\frac{m}{n} \longrightarrow \gamma \in (0, \infty)$.

$(ii)$: Recall that $\mathbb{E}[T_\tau] = \|\Theta\|_2^2$, and the test $\phi(T_\tau)$ rejects $H_0$ when $T_\tau \geq \frac{4m}{9n}z_{1- \alpha}$. In what follows we let $\|\Theta\|_2 = C \sqrt{n}$ for an arbitrary fixed constant $C > 0$. By Chebyshev's inequality, for large enough $n$ under the regime $\frac{m}{n} \longrightarrow \gamma$,
\begin{multline}
1 - \mathbb{E}[ \phi(T_\tau)] = P\left( T_\tau - \|\Theta_\tau\|_2^2 \leq \frac{4m}{9n} z_{1 - \alpha} - \|\Theta_\tau\|_2^2\right) \\
\leq P\left(\left|T_\tau - \|\Theta_\tau\|_2^2 \right| \geq \left|\frac{4m}{9n} z_{1 - \alpha} - \|\Theta_\tau\|_2^2\right| \right) \leq \frac{\mathrm{Var}(T_\tau)}{   (\frac{4m}{9n}  z_{1 - \alpha} - \|\Theta_\tau\|_2^2 )^2} \label{Cheby},
\end{multline} 
where the first inequality is true when $C$ is taken large enough. We will finish the proof by showing that as $\frac{m}{n} \longrightarrow \gamma$, the rightmost term of \eqref{Cheby} is less than $1 - \beta$ when C is chosen large enough. To that end we will study the variance of the statistic $T_\tau$. Note that 
\[
T_\tau = \frac{1}{{n \choose 4}}  \sum_{1 \leq i < j < k < l \leq n} h^T_\tau({\bf X}_i, {\bf X}_j, {\bf X}_k, {\bf X}_l )
\]
is a U-statistic with the kernel of degree 4
\[
h^T_\tau({\bf X}_i, {\bf X}_j, {\bf X}_k, {\bf X}_l ) := 
\sum_{1 \leq p < q \leq m} h_\tau^W  ({\bf X}_i ^{(pq)}, {\bf X}_j ^{(pq)}, {\bf X}_k ^{(pq)}, {\bf X}_l ^{(pq)}),
\]
where $h^W_\tau$ is the function $h^W$ defined in \eqref{hw_kern} when
$h$ is the Kendall kernel $h_\tau$.  Here it is important to note that
the kernel $h^T_\tau$ also depends on the number of variables $m$ since it
is a sum of ${m \choose 2}$ terms.  By Lemma 5.2.1A in
\cite{Serfling1980}, the variance of $T_\tau$ satisfies
\begin{equation} \label{Var_Ttau}
\mathrm{Var}(T_\tau) := {n \choose 4}^{-1} \sum_{c = 1}^4 {4 \choose c } {n - 4 \choose 4 - c} \zeta_c^{\tau} 
\leq \frac{16\zeta^{h^T_\tau}_1}{n} + \frac{\widetilde{C}}{n^2}\left(\zeta^{h^T_\tau}_2 + \zeta^{h^T_\tau}_3+ \zeta^{h^T_\tau}_4\right)
\end{equation}
for a constant $\widetilde{C} > 0 $ that does not depend on $C$;  recall definition \eqref{cov_quan_0} for the kernel $h = h^T_\tau$.
\begin{claim*}
$\zeta_1^{h^T_\tau} \leq C^2 n m (m-1)$
\end{claim*}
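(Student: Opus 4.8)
The plan is to recognize $\zeta_1^{h^T_\tau}$ as the variance of the first-order Hoeffding projection of the degree-$4$ kernel $h^T_\tau$ and then to bound this variance using the product structure of $h^W_\tau$ together with the boundedness of the Kendall kernel. Recall that for a symmetric kernel of degree $k$ the covariance $\zeta_1$ from \eqref{cov_quan_0} equals the variance of the projection onto a single argument (see \cite{Serfling1980}); thus $\zeta_1^{h^T_\tau} = \mathrm{Var}\big[(h^T_\tau)_1({\bf X}_1)\big]$, where $(h^T_\tau)_1({\bf x}) := \mathbb{E}\big[h^T_\tau({\bf x}, {\bf X}_2, {\bf X}_3, {\bf X}_4)\big]$. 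As $h^T_\tau = \sum_{1 \le p < q \le m} h^W_\tau$ is a sum over variable pairs, the projection splits accordingly, $(h^T_\tau)_1({\bf x}) = \sum_{1 \le p < q \le m} g^{(pq)}({\bf x})$ with $g^{(pq)}({\bf x}) := \mathbb{E}\big[h^W_\tau({\bf x}^{(pq)}, {\bf X}_2^{(pq)}, {\bf X}_3^{(pq)}, {\bf X}_4^{(pq)})\big]$.

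First I would compute $g^{(pq)}$ explicitly, which is the main step. Specializing \eqref{hw_kern} to $h = h_\tau$ (so $k = 2$ and $h^W_\tau$ has degree $4$), the kernel $h^W_\tau$ is $\tfrac{1}{3}$ times the sum of the three products of Kendall factors obtained from the three ways of splitting $\{1,2,3,4\}$ into two pairs. In each such product the fixed argument ${\bf x}_1$ lies in exactly one factor, so averaging over the three free arguments factorizes the product into $h_{\tau,1}^{(pq)}({\bf x})$, the first projection $h_{\tau,1}^{(pq)}({\bf x}) := \mathbb{E}[h_\tau({\bf x}^{(pq)}, {\bf X}_2^{(pq)})]$ of the Kendall kernel, times $\mathbb{E}[h_\tau] = \theta_\tau^{(pq)}$ for the fully averaged factor. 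All three terms coincide, yielding the clean identity $g^{(pq)}({\bf x}) = \theta_\tau^{(pq)}\, h_{\tau,1}^{(pq)}({\bf x})$. The care needed here is purely bookkeeping: one must track which factor retains ${\bf x}_1$ and confirm the complementary factor integrates to exactly $\theta_\tau^{(pq)}$; I expect this identification to be the only nontrivial point of the argument.

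With this identity the bound is routine. Because $h_\tau \in \{-1, +1\}$ almost surely for continuous data, its projection satisfies $h_{\tau,1}^{(pq)}({\bf x}) \in [-1,1]$ and hence $\mathrm{Var}\big[h_{\tau,1}^{(pq)}({\bf X}_1)\big] \le 1$. Since the standard deviation is subadditive (the triangle inequality in $L^2$ applied to the centered summands of $(h^T_\tau)_1 = \sum_{p<q}\theta_\tau^{(pq)} h_{\tau,1}^{(pq)}$),
\[
\sqrt{\zeta_1^{h^T_\tau}} \;\le\; \sum_{1 \le p < q \le m}\big|\theta_\tau^{(pq)}\big|\,\sqrt{\mathrm{Var}\big[h_{\tau,1}^{(pq)}({\bf X}_1)\big]} \;\le\; \sum_{1 \le p < q \le m}\big|\theta_\tau^{(pq)}\big|.
\]
Finally, Cauchy--Schwarz gives $\big(\sum_{p<q}|\theta_\tau^{(pq)}|\big)^2 \le {m \choose 2}\sum_{p<q}(\theta_\tau^{(pq)})^2 = {m \choose 2}\|\Theta_\tau\|_2^2$, so that with $\|\Theta_\tau\|_2 = C\sqrt{n}$ I conclude $\zeta_1^{h^T_\tau} \le {m \choose 2}C^2 n = \tfrac{1}{2}C^2 n\, m(m-1) \le C^2 n\, m(m-1)$, which is the asserted inequality (indeed with a factor of $2$ to spare).
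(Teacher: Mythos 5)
Your proof is correct and takes essentially the same route as the paper: both rest on the factorization of the first-order projection of $h^W_\tau$ as $\theta_\tau^{(pq)}\, h_{\tau,1}^{(pq)}$ (which in the paper appears as the identity $\zeta_1^{h^T_\tau} = \sum \theta_\tau^{(pq)}\theta_\tau^{(p'q')}\zeta_1^{h_\tau}$, obtained by conditioning on the shared observation), followed by bounding the resulting quadratic form in $\Theta_\tau$ by the operator norm ${m \choose 2}$ of the all-ones matrix, which is exactly what your Cauchy--Schwarz step does. The only differences are cosmetic: you phrase $\zeta_1$ as the variance of the Hoeffding projection rather than as a covariance of kernels sharing one argument, and your bound of $1$ on the Kendall-level covariance (versus the paper's cruder bound of $2$) gives the claimed inequality with a factor of $2$ to spare.
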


\begin{proof}[Proof of the claim]
For seven distinct sample indices $i_1, \dots, i_7 \in \{1, \dots, n\}$,
 \begin{align*}
 \zeta_1^{h^T_\tau} &=  \mathbb{E} [h^T_\tau({\bf X}_{i_1}, \dots, {\bf X}_{i_4}) h^T_\tau({\bf X}_{i_4}, \dots, {\bf X}_{i_7})] - \|\Theta_\tau\|_2^4\\
 &= \sum_{ \substack{1 \leq p < q \leq m \\ 1 \leq p' < q' \leq m}}  \mathbb{E}[h^W_\tau ({\bf X}^{(pq)}_{i_1},\dots, {\bf X}^{(pq)}_{i_4}) h^W_\tau ({\bf X}^{(p'q')}_{i_4},\dots, {\bf X}^{(p'q')}_{i_7})] - \|\Theta_\tau\|_2^4\\
 &= \sum_{ \substack{1 \leq p < q \leq m \\ 1 \leq p' < q' \leq m}}\theta_\tau^{(pq)}\theta_\tau^{(p'q')}\zeta_1^{h_\tau} ,
 \end{align*}
 where the last equality is true by the definition of $h_\tau^W$ and
 independence. Since $|h_\tau| \leq 1 $, it is true that
 $ \zeta_1^{h_\tau} = |\zeta_1^{h_\tau}| \leq 2 $. This in turns
 implies that $\zeta_1^{h^T_\tau}$ is less than the quadratic form
 $2 \Theta_\tau^T {\bf J}_{m \choose 2} \Theta_\tau$, where
 ${\bf J}_{m \choose 2}$ is the ${m \choose 2}$-by-${m \choose 2}$
 semi-positive definite matrix with all $1$'s. Since the largest
 eigenvalue of ${\bf J}_{m \choose 2}$ is ${m \choose 2}$, given that
 $\|\Theta_\tau\|_2 = C \sqrt{n}$,
 \[2 \Theta_\tau^T {\bf J}_{m \choose 2} \Theta_\tau \leq C^2 n m
 (m-1),\] and the claim is proved.
\end{proof}

Returning to the other quantities in~(\ref{Var_Ttau}), since
$|h^W_\tau| \leq 1$, it is easy to show that each of
$\zeta_2^{h^T_\tau}$, $\zeta_3^{h^T_\tau}$ and $\zeta_4^{h^T_\tau}$ is
bounded by $2 {m \choose 2}^2$. Hence, under the regime $\frac{m}{n}
\longrightarrow \gamma$, together with the claim above,
\eqref{Var_Ttau} gives that for all large $n$,
\begin{equation} \label{varlessthan}
\mathrm{Var}(T_\tau) \leq   m^2( 16 C^2 + 3 \gamma^2 \widetilde{C}).
\end{equation}
Recalling that $\|\Theta_\tau\|^2 = C\sqrt{n}$, and applying
\eqref{varlessthan} to \eqref{Cheby}, we get that
\begin{equation} \label{Cheby2}
1 - \mathbb{E}[\phi(T_\tau)] \leq 
\frac{m^2( 16 C^2 + 3 \gamma^2 \widetilde{C})}{C^4 n^2 - C^2 \frac{8}{9}m z_{1 - \alpha} + \frac{16m^2}{81n^2}  z_{1- \alpha}^2}
\end{equation}
for all large $n$. Since $C$ is arbitrary, by choosing it large enough the right hand side of \eqref{Cheby2} can be made less than $1 - \beta$ as $\frac{m}{n} \longrightarrow \gamma$. 
\end{proof}

The following lemma is needed for the proof of \thmsref{conjectureThm} and \thmssref{HLnotWork}.

\begin{lemma} \label{lem:Taylor}
Let $I= [0, 1- \epsilon] \subset \mathbb{R}$ for some small fixed $\epsilon> 0$.  For fixed positive integers $c_1, \dots, c_b$ such that $\sum_{i = 1}^b c_i = c$, suppose ${\bf X} = (X^{(1)}, \dots, X^{(c)}) \sim N(0, \Sigma)$ is a $c$-variate normal random vector with an invertible block diagonal covariance matrix
\[
\Sigma = \Sigma(\rho) = \begin{bmatrix}
  B_1(\rho) &  \\

  & \ddots \\
   &  &B_b  (\rho)
 \end{bmatrix},
\]
where each $B_i(\rho)$ is a $c_i$-by-$c_i$ matrix with $1$'s on the diagonal and all off-diagonal entries equal to some $\rho \in I$. If $H: \mathbb{R}^c \longrightarrow \mathbb{R}$ is a bounded function such that $\mathbb{E}[H({\bf X})] = 0$ when $\rho = 0$, then there exists a constant $C = C(H, \epsilon )> 0$ such that $|\mathbb{E}[H({\bf X})]| \leq C \rho$ for all $\rho \in I$.
\end{lemma}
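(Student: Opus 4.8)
The plan is to view $\psi(\rho) := \mathbb{E}[H({\bf X})]$ as a function of the single scalar parameter $\rho$ ranging over $I = [0,1-\epsilon]$ and to prove that $\psi$ is continuously differentiable on $I$ with $\sup_{t\in I}|\psi'(t)|$ bounded by a constant depending only on $H$ and $\epsilon$. Since we are given $\psi(0)=0$, the fundamental theorem of calculus then yields $|\psi(\rho)| = \big|\int_0^\rho \psi'(t)\,dt\big| \le \rho\,\sup_{t\in I}|\psi'(t)| \le C\rho$, which is exactly the claim. Everything therefore reduces to a uniform bound on $\psi'$ over $I$.

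Write $\psi(\rho) = \int_{\mathbb{R}^c} H({\bf x})\,f_\rho({\bf x})\,d{\bf x}$, where $f_\rho$ is the $N(0,\Sigma(\rho))$ density, and note that the block-diagonal structure of $\Sigma(\rho)$ makes $f_\rho$ factorize as $\prod_{i=1}^b f_\rho^{(i)}$, with $f_\rho^{(i)}$ the $N(0,B_i(\rho))$ density on $\mathbb{R}^{c_i}$. First I would justify differentiating under the integral sign, giving $\psi'(\rho) = \int H({\bf x})\,\partial_\rho f_\rho({\bf x})\,d{\bf x}$; since $|H|\le M$ for some constant $M$, this yields $|\psi'(\rho)| \le M\int|\partial_\rho f_\rho({\bf x})|\,d{\bf x}$. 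Applying the product rule to the factorization and using that each $f_\rho^{(j)}$ integrates to one, the total-variation-type integral splits as $\int|\partial_\rho f_\rho| \le \sum_{i=1}^b \int_{\mathbb{R}^{c_i}}|\partial_\rho f_\rho^{(i)}({\bf y})|\,d{\bf y}$, so it suffices to bound, uniformly over $\rho\in I$, the $\rho$-derivative of a single equicorrelated Gaussian block.

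For a block $B(\rho) = (1-\rho)I + \rho J$ of size $c_i$ (with $J$ the all-ones matrix), the eigenvalues are $1-\rho$ with multiplicity $c_i-1$ and $1+(c_i-1)\rho$; both are bounded below by $\epsilon$ (since $\rho\ge 0$ and $1-\rho\ge\epsilon$) and above by a constant on $I$, so in particular $\|B(\rho)^{-1}\|\le 1/\epsilon$ uniformly. Writing $\partial_\rho f_\rho^{(i)} = f_\rho^{(i)}\,\partial_\rho\log f_\rho^{(i)}$ and using $\partial_\rho B = J-I$ together with $\partial_\rho B^{-1} = -B^{-1}(J-I)B^{-1}$, I would compute
\[
\partial_\rho \log f_\rho^{(i)}({\bf y}) = -\tfrac12\,\partial_\rho\log|B(\rho)| - \tfrac12\,{\bf y}^{T}\big(\partial_\rho B^{-1}\big){\bf y},
\]
a quadratic in ${\bf y}$ whose constant term $\partial_\rho\log|B| = -\frac{c_i-1}{1-\rho} + \frac{c_i-1}{1+(c_i-1)\rho}$ and whose quadratic-form matrix $\partial_\rho B^{-1}$ both have norms bounded uniformly on $I$ by the eigenvalue control. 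Consequently $\int|\partial_\rho f_\rho^{(i)}|\,d{\bf y} = \mathbb{E}_\rho\big[|\partial_\rho\log f_\rho^{(i)}({\bf Y})|\big] \le a + b\,\mathbb{E}_\rho[\|{\bf Y}\|^2]$ for constants $a,b$ depending only on $\epsilon$ and $c_i$; since $\mathbb{E}_\rho[\|{\bf Y}\|^2] = \operatorname{tr} B(\rho) = c_i$, the block integral is uniformly bounded on $I$.

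The main obstacle, and the reason the hypothesis $\rho\le 1-\epsilon$ is essential, is producing a single integrable dominating function for $\sup_{\rho\in I}|\partial_\rho f_\rho|$ that simultaneously legitimizes differentiation under the integral and controls the total-variation integral. Here I would use the uniform eigenvalue bounds to dominate $f_\rho({\bf x}) \le C_1\exp(-c_2\|{\bf x}\|^2)$ and $|\partial_\rho\log f_\rho({\bf x})| \le C_3(1+\|{\bf x}\|^2)$ with constants independent of $\rho\in I$, so that $\sup_{\rho\in I}|\partial_\rho f_\rho({\bf x})| \le C_1 C_3(1+\|{\bf x}\|^2)\exp(-c_2\|{\bf x}\|^2)$ is integrable and the dominated convergence theorem applies. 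These constants deteriorate as $\epsilon\to 0$ because $\|B(\rho)^{-1}\|$ diverges as $\rho\to 1$, which is precisely why the final constant $C$ must depend on $\epsilon$.
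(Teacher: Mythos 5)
Your proof is correct, but it reaches the conclusion by a more elementary, self-contained route than the paper. Both arguments share the same skeleton: show that $\psi(\rho)=\mathbb{E}[H(\mathbf{X})]$ is differentiable in $\rho$ with derivative bounded uniformly over the compact interval $I$, then conclude from $\psi(0)=0$ via the mean value theorem (or, as you phrase it, the fundamental theorem of calculus). Where you differ is in how the differentiability and the derivative bound are obtained. The paper's proof is essentially two lines: it observes that $\{N(0,\Sigma(\rho))\}$ is a curved exponential family (the precision matrix $\Sigma^{-1}(\rho)$ being smooth in $\rho$), cites a standard exponential-family result (Theorem 5.8 of Lehmann and Casella's \emph{Theory of Point Estimation}) for continuity and differentiability of $\rho\mapsto\mathbb{E}[H(\mathbf{X})]$, and then invokes the mean value theorem together with compactness of $[0,1-\epsilon]$. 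You instead prove these analytic facts from scratch: differentiation under the integral sign justified by an explicit integrable dominating function, the block factorization $f_\rho=\prod_i f_\rho^{(i)}$ to reduce the total-variation integral to a single equicorrelated block, and the explicit spectrum $\{1-\rho,\,1+(c_i-1)\rho\}$ of $(1-\rho)I+\rho J$ to control $\|B(\rho)^{-1}\|$, $\partial_\rho\log|B(\rho)|$ and $\partial_\rho B(\rho)^{-1}$ uniformly on $I$. Your route buys self-containedness, an in-principle explicit constant, and a transparent account of exactly where the hypothesis $\rho\le 1-\epsilon$ enters and why $C$ must blow up as $\epsilon\to 0$; the paper's route buys brevity at the cost of an external citation. (One cosmetic remark: the mean value theorem suffices in your final step, so continuity of $\psi'$ is not actually needed, though your dominated-convergence argument delivers it anyway.)
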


\begin{proof}
  For all $\rho \in I$, the matrix $\Sigma(\rho)$ is invertible and
  the precision matrix $\Sigma^{-1}(\rho)$ is a smooth function of
  $\rho$.  Hence, the set of distributions $N(0, \Sigma(\rho))$ forms
  a curved exponential family.  By standard results on exponential
  families \citep[Theorem 5.8]{TPE}, the expectation
  $\mathbb{E}[H({\bf X})]$ is a continuous function of $\rho$ that is
  differentiable on $(0, 1 - \epsilon)$. The lemma is thus implied by
  the mean value theorem and the compactness of $[0, 1 - \epsilon]$.
\end{proof}

\begin{proof}[Proof of \thmref{conjectureThm}]
  The value of $T_\tau$ depends only on the rank vectors
  ${\bf R}^{(1)}, \dots, {\bf R}^{(m)}$.  Without loss of generality,
  we may thus assume that each $X^{(p)}$ is centered with unit
  variance, i.e., $(X^{(1)}, \dots, X^{(m)}) \sim N(0, R)$, where
  $R = (\rho^{(pq)})$ is a correlation matrix, with $1$'s on
  the diagonal.

  It suffices to prove the result under the restriction that $\theta$
  can only take values in a closed interval $[0, 1 - \epsilon]$, for
  some fixed small $\epsilon > 0$.  In other words, in the statement
  of the theorem, replace the set of distributions
  $\mathcal{N}_m(\|\Theta_\tau\|_2 \geq \tilde{C}; \theta^{(pq)}_\tau
  = \theta)$ under the infimum by the subset
  \begin{equation} \label{subsetDist}
    \{N \in \mathcal{N}_m(\|\Theta_\tau\|_2 \geq \tilde{C};
    \theta^{(pq)}_\tau = \theta) : \theta \in  [0, 1 - \epsilon]\}. 
  \end{equation}
  To see that this restriction can be made, note that
  $\theta > 1 - \epsilon$ implies that
  $\|\Theta_\tau\|_2 > \sqrt{m \choose 2} (1 - \epsilon) = O(m)$.
  Since $O(m) > O(\sqrt{n})$ asymptotically under the regime
  $\frac{m}{n} \longrightarrow \gamma$, by \thmref{beatMax}$(ii)$,
  nothing is lost by ignoring the normal distributions in
  $\mathcal{N}_m(\|\Theta_\tau\|_2 \geq \tilde{C}; \theta^{(pq)}_\tau
  = \theta)$
  with $\theta > 1 - \epsilon$. In addition, for all $p \not= q$, by
  we have the classical result \citep[p.823]{kruskal1958},
\[
\rho^{(pq)} = \rho  = \sin\left(\frac{\pi \theta}{2}\right)
\] 
when $\theta_\tau^{(pq)} = \theta$. As a consequence, for the covariance matrix $R$ to be positive definite it must be that $ \theta >-\frac{2}{\pi} \arcsin [ \frac{1}{m -1}]$ \citep[Theorem 7.2.5]{HornAndJohnson}. Hence, as $n$ and $m$ grow, it can be seen that $\|\Theta_\tau\|_2 < 1/\sqrt{2}$ when $\theta$ lies in the interval $(-\frac{2}{\pi} \arcsin [ \frac{1}{m-1}], 0)$. As such, by taking the constant $\tilde{C}$ to be larger than $1/\sqrt{2}$ when necessary, it suffices to consider  the subset of distributions \eqref{subsetDist} under the infimum.

In what follows, the operators $\mathbb{E}[\cdot], \text{Var}[\cdot]$
and $\text{Cov}[\cdot]$ are all with respect to an $m$-variate normal
distribution for $(X^{(1)}, \dots, X^{(m)})$ in
\eqref{subsetDist}. Recall from \eqref{Var_Ttau}
that
\[
\mathrm{Var}(T_\tau) := {n \choose 4}^{-1} \sum_{c = 1}^4 {4 \choose c } {n - 4 \choose 4 - c} \zeta_c^{h^T_\tau}. 
\]
Our proof now begins with the Chebyshev inequality from \eqref{Cheby}:
\begin{align} \label{Cheby2}
1 - \mathbb{E}[ \phi_\alpha(T_\tau)] &\leq \frac{\text{Var}(T_\tau)}{ ( \frac{4m}{9n} z_{1 - \alpha} - \|\Theta_\tau\|_2^2 )^2 } 
\;\leq
\frac{\zeta_c^{h^T_\tau} B \sum_{c = 1}^4 n^{-c}}{
 ( \frac{4m}{9n}  z_{1 - \alpha})^2 - \frac{8m}{9n} z_{1 - \alpha}\|\Theta_\tau\|_2^2 + \|\Theta_\tau\|_2^4
},
\end{align}
where the last inequality is true since
${n \choose 4}^{-1}{4 \choose c } {n - 4 \choose 4 - c} \leq B n^{-c}$
for a constant $B > 0$.  To finish the proof, it suffices to show that
for each $c = 1, \dots, 4$, a constant
$\tilde{C}_c(\alpha, \beta, \gamma)> 0$ exists such that for large
enough $n$ (depending on $\tilde{C}_c$),
\begin{equation} \label{c=}
 \frac{B\zeta_c^{h^T_\tau} n^{-c} }{ ( \frac{4m}{9n}z_{1 - \alpha})^2 - \frac{8m}{9n} z_{1 - \alpha}\|\Theta_\tau\|_2^2 + \|\Theta_\tau\|_2^4} < \frac{1 - \beta}{4}
\end{equation}
whenever $\|\Theta_\tau\|_2 > \tilde{C}_c$.  We may then take $\tilde{C} = \max_{c= 1, \dots, 4} \tilde{C}_c$. 

For notational convenience, we define
\[
 f_{{\bf i}, {\bf j}} := \sum_{ \substack{1 \leq p < q \leq m \\ 1 \leq p' < q' \leq m}}  \mathbb{E}[h^W_\tau ({\bf X}^{(pq)}_{i_1},\dots, {\bf X}^{(pq)}_{i_4}) h^W_\tau ({\bf X}^{(p'q')}_{j_1},\dots, {\bf X}^{(p'q')}_{j_4})] \geq 0,
\]
for any tuples ${\bf i} = (i_1, \dots, i_4)$, ${\bf j} = (j_1, \dots,
j_4) \in \mathcal{P}(n, 4)$ such that $|{\bf i}\cap {\bf j}| = c$.  Then
\begin{equation} \label{zeta_c_simplified}
\zeta_c^{h^T_\tau}  = f_{{\bf i}, {\bf j}} - \|\Theta_\tau\|_2^4.
\end{equation}
Since the ratio 
\[ \frac{B \|\Theta_\tau\|^4_2}{( \frac{4m}{9n}  z_{1 - \alpha})^2 -
  \frac{8m}{9n}  z_{1 - \alpha}\|\Theta_\tau\|_2^2 +
   \|\Theta_\tau\|_2^4}
\]
is bounded for all values of $\|\Theta_\tau\|_2$, we have for each $c =
1,\dots, 4$ that
\[
\frac{B \|\Theta_\tau\|^4_2 n^{-c}}{( \frac{4m}{9n}  z_{1 -
    \alpha})^2 - \frac{8m}{9n}  z_{1 - \alpha}\|\Theta_\tau\|_2^2
  + \|\Theta_\tau\|_2^4} \longrightarrow 0
\]
as $\frac{m}{n} \longrightarrow \gamma$.  Upon substituting \eqref{zeta_c_simplified}
into \eqref{c=}, we see that the proof is finished if the
below claim is shown to be true.
\end{proof}

\begin{claim*}
Under $\theta_\tau^{(pq)} = \theta$, there exists for each $c = 1, \dots, 4$, a constant $\tilde{C}_c(\alpha, \beta, \gamma)> 0$ such that  for large enough $n$ (depending on $\tilde{C}_c$),  
\begin{equation} \label{c=again}
 \frac{B f_{{\bf i}, {\bf j}} n^{-c} }{ ( \frac{4m}{9n}  z_{1 - \alpha})^2 - \frac{8m}{9n} z_{1 - \alpha}\|\Theta_\tau\|_2^2 + \|\Theta_\tau\|_2^4} < \frac{1 - \beta}{5}.
\end{equation}
whenever $\|\Theta_\tau\|_2 = \theta\sqrt{{m \choose 2} }> \tilde{C}_c$.
\end{claim*}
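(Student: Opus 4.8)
The plan is to bound $f_{{\bf i},{\bf j}}$ by splitting the double sum that defines it according to how the two variable pairs overlap, writing $v:=|\{p,q\}\cap\{p',q'\}|\in\{0,1,2\}$ while keeping $c=|{\bf i}\cap{\bf j}|$ fixed. The number of summands with a given overlap $v$ is of order $m^{4-v}$, so it suffices to show that the contribution of each of the three blocks $v=0,1,2$ to the ratio in \eqref{c=again} can be made smaller than $(1-\beta)/15$ by taking $\tilde C_c$ large. Throughout I would use that, on the subset \eqref{subsetDist}, the common signal satisfies $\theta=\|\Theta_\tau\|_2/\sqrt{\binom m2}\asymp\|\Theta_\tau\|_2/m$ and the common Pearson correlation is $\rho=\sin(\tfrac{\pi}{2}\theta)\asymp\theta$; also the denominator equals $(\|\Theta_\tau\|_2^2-\tfrac{4m}{9n}z_{1-\alpha})^2$, which for $\tilde C_c$ large is $\asymp\|\Theta_\tau\|_2^4$ under $m/n\to\gamma$.

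For a single summand I would expand each factor $h^W_\tau({\bf X}^{(pq)}_{\bf i})$ via \eqref{hw_kern} into an average of products $h_\tau({\bf X}^{(pq)}_{{\bf j}'})\,h_\tau({\bf X}^{(pq)}_{{\bf i}\setminus{\bf j}'})$ of degree-$2$ Kendall kernels. Since distinct samples are i.i.d., any $h_\tau$-factor supported entirely on non-shared sample indices is independent of the companion factor $h^W_\tau({\bf X}^{(p'q')}_{\bf j})$, and taking its expectation frees a factor $\theta_\tau^{(pq)}=\theta$. This confines the remaining dependence to the $h_\tau$-factors touching the $c$ shared samples. The decisive structural observation—already used in the proof of \thmref{beatMax}$(ii)$—is that, with variables and samples fixed, the relevant Gaussian coordinates have a block-diagonal covariance whose blocks are indexed by sample (samples being independent) and are equicorrelated with parameter $\rho$; this is exactly the hypothesis of \lemref{Taylor}. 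Whenever a shared-sample interaction couples two distinct variable pairs ($v<2$), the corresponding single-sample cross expectation has mean zero at $\rho=0$—disjoint-variable Kendall kernels being independent with vanishing mean under independence, cf.\ \lemref{general_properties}$(iv)$—so \lemref{Taylor} supplies an extra factor $O(\rho)=O(\theta)$ per bridging sample; when $v=2$, the degeneracy $\zeta^{h^W_\tau}_0=\zeta^{h^W_\tau}_1=0$ makes the interaction vanish at $\rho=0$ for $c\le1$.

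Assembling these, a summand with overlap $v$ and fixed $c$ is of order $\theta^{e(c,v)}$ with $e(c,v)\ge 4-c-v$: when $c=1$ the W-expansion always frees two powers of $\theta$ from the complementary non-shared kernels, while for larger $c$ each shared sample that must bridge the two variable groups contributes an independent $O(\theta)$ cross-factor (for $v=0$ one checks $e(1,0)=3$, $e(2,0)=2$, $e(3,0)=1$, $e(4,0)=0$, and $v\ge1$ only relaxes the requirement). Multiplying the per-summand order $\theta^{e}\asymp(\|\Theta_\tau\|_2/m)^{e}$ by the count $m^{4-v}$ and by $n^{-c}$, then dividing by the denominator $\asymp\|\Theta_\tau\|_2^4$, gives a bound of order $n^{\,4-v-e-c}\,\|\Theta_\tau\|_2^{\,e-4}$. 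Since $e\ge 4-c-v$ the exponent of $n$ is nonpositive, and since $e<4$ whenever that exponent vanishes, the exponent of $\|\Theta_\tau\|_2$ is strictly negative; hence each block is $O(\|\Theta_\tau\|_2^{-r})$ for some $r>0$, uniformly for large $n$, and is made $<(1-\beta)/15$ by choosing $\tilde C_c$ large. Summing the three blocks yields \eqref{c=again}.

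I expect the main obstacle to be the exact bookkeeping of the powers of $\theta$ for the small-overlap summands, in particular $v=0$ with $c\in\{1,2\}$: here a single naive application of \lemref{Taylor} (which gives only $O(\rho)$ per summand) is hopelessly weak against the $\asymp m^4$ disjoint-variable terms, and one must combine (a) the two $\theta$-factors liberated by the W-construction from the non-shared complementary kernels, (b) the degeneracy $\zeta^{h^W_\tau}_1=0$ of the induced kernel, and (c) one $O(\rho)$ cross-factor per bridging shared sample, in order to certify $e(c,v)\ge 4-c-v$ in every configuration. Verifying this inequality with constants uniform over the admissible range $\theta\in[0,1-\epsilon]$ (so that the constant in \lemref{Taylor} is uniform) is the crux of the argument.
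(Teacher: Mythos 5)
Your proposal is correct and takes essentially the same route as the paper's own proof: split the double sum over variable pairs by their overlap, free exact factors of $\theta$ from the Kendall kernels supported on non-shared sample indices via independence across samples, and apply the Gaussian equicorrelation Taylor lemma (Lemma~\ref{lem:Taylor}) once per bridging component of shared samples, before counting summands against the denominator $\asymp \|\Theta_\tau\|_2^4$ under $m/n \longrightarrow \gamma$. The only difference is organizational: the paper carries this out by explicit term-by-term expansions for $c=1$ and $c=2$ (asserting $c\ge 3$ is similar), whereas your unified exponent inequality $e(c,v)\ge 4-c-v$ packages the same bookkeeping so that all four cases are handled at once.
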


\begin{proof}[Proof of the claim when $c = 1$] Using independence, we
  find that for any four distinct indices $1 \leq i, j, k \leq n$,
\begin{multline}\label{fc=1}
f_{\bf i, j} 
= \underbrace{ \sum_{\substack{ 1 \leq p < q \leq m \\ 1 \leq p' < q' \leq m \\ |\{p, q\}\cap \{p', q'\}| \geq 1} }  \theta^2 \mathbb{E}[h_\tau({\bf X}^{(pq)}_i, {\bf X}^{(pq)}_j)h_\tau({\bf X}^{(p'q')}_i, {\bf X}^{(p'q')}_k)]}_{(1)}  + \\
\underbrace{\sum_{ \substack{ 1 \leq p < q \leq m \\ 1 \leq p' < q' \leq m \\ |\{p, q\}\cap \{p', q'\}| = 0}}\theta^2 \mathbb{E}[h_\tau({\bf X}^{(pq)}_i , {\bf X}^{(pq)}_j)  
h_\tau({\bf X}^{(p'q')}_i , {\bf X}^{(p'q')}_k)]}_{(2)}. 
\end{multline}
Since $|h_\tau| \leq 1$, the term $(1)$ is bounded in absolute value
by  $[{m \choose 2}^2 - {m \choose 2}{m -2 \choose 2}] \theta^2 =
O(m)\|\Theta_\tau\|^2_2$. 
To bound $(2)$, note that when $|\{p, q\} \cap \{p', q'\}| = 0$, the expectation term 
\begin{equation} \label{exp_term_all_not_equal}
\mathbb{E}[h_\tau({\bf X}^{(pq)}_i , {\bf X}^{(pq)}_j)  
h_\tau({\bf X}^{(p'q')}_i , {\bf X}^{(p'q')}_k)]
\end{equation}
equals $0$ when $\theta = 0$ due to the independence of
$\{{\bf X}_i^{(pq)}, {\bf X}_j^{(pq)}\}$ and
$\{{\bf X}_i^{(p'q')}, {\bf X}_k^{(p'q')}\}$.  Moreover, for
$\theta\not=0$, the pairs
$\{{\bf X}_i^{(pq)}, {\bf X}_j^{(pq)}, {\bf X}_i^{(p'q')}, {\bf
  X}_k^{(p'q')}\}$
jointly follow a $8$-variate normal distribution with block diagonal
covariance matrix, where each block has $1$'s on the diagonal and all
its off-diagonal entries equal to $\rho = \sin(\pi \theta/2)$. By
\lemref{Taylor}, the expectation \eqref{exp_term_all_not_equal} is
bounded in absolute value, up to a multiplying constant, by $\theta$,
and hence $(2)$ bounded by $O(m^4) \theta^3 = O(m)\|\Theta_\tau\|_2^3$
in absolute value.

Using  the above bounds for $(1)$ and $(2)$ we get that the left hand side of \eqref{c=again} is less than 
\[
\frac{\frac{O(m)}{n} (\|\Theta_\tau\|^2_2 +    \|\Theta_\tau\|_2^3)}{ ( \frac{4m}{9n}  z_{1 - \alpha})^2 - \frac{8m}{9n}  z_{1 - \alpha}\|\Theta_\tau\|_2^2 + \|\Theta_\tau\|_2^4}.
\]
Under the regime $\frac{m}{n} \longrightarrow \gamma$, we see that the
expression in the above display can be made less than $\frac{1 -
  \beta}{5}$ when $\|\Theta_\tau\|_2$ and $n$ are large enough. 
\end{proof}
\begin{proof}[Proof of the claim when $c = 2$]
Again, using independence, we find that 
\begin{multline} \label{multline0}
9 f_{\bf i, j} =
\underbrace{\sum  4 \left(\mathbb{E}\left[h_\tau({\bf X}^{(pq)}_i, {\bf X}^{(pq)}_j) h_\tau({\bf X}^{(p'q')}_i, {\bf X}^{(p'q')}_k) \right]\right)^2 }_{(1)}+ \\
\underbrace{\sum \theta^2 \mathbb{E}[h_\tau({\bf X}_i^{(pq)}, {\bf X}_j^{(pq)}) h_\tau({\bf X}_i^{(p'q')}, {\bf X}_j^{(p'q')})]}_{(2)} +\\
\underbrace{\sum2 \theta \mathbb{E}[h_\tau({\bf X}_i^{(pq)}, {\bf X}_k^{(pq)}) h_\tau({\bf X}_i^{(p'q')}, {\bf X}_j^{(p'q')}) h_\tau({\bf X}_j^{(pq)}, {\bf X}_l^{(pq)})]}_{(3)} + \\
\underbrace{\sum2 \theta \mathbb{E}[h_\tau({\bf X}_i^{(p'q')}, {\bf X}_k^{(p'q')}) h_\tau({\bf X}_i^{(pq)}, {\bf X}_j^{(pq)}) h_\tau({\bf X}_j^{(p'q')}, {\bf X}_l^{(p'q')})]}_{(4)},
\end{multline}
where each summation is over all pairs $1 \leq p <q \leq m$ and
$1 \leq p' <q' \leq m$, and $i, j, k, l$ are any $4$ distinct indices
in $\{1, \dots, n\}$.  We now derive bounds for the absolute values of
the terms $(1), (2), (3), (4)$.

\ \

{\em Term $(1)$:}  We claim that $|(1)| \leq O(m^2) (1 + \|\Theta_\tau\|^2_2)$.
 To show this, observe that $(1)$ equals
\begin{multline} \label{multline1}
\sum_{ 1\leq p < q \leq m} 4
(\mathbb{E}[h_\tau({\bf X}_i^{(pq)}, {\bf X}_j^{(pq)}) h_\tau({\bf X}_i^{(pq)}, {\bf X}_k^{(pq)})])^2 + \\
\sum_{ \substack{|\{p, q \} \cap \{p', q'\}| = 0\\ 1 \leq p < q \leq m\\ 1 \leq p' < q' \leq m }}4 (\mathbb{E}[h_\tau({\bf X}_i^{(pq)}, {\bf X}_j^{(pq)}) h_\tau({\bf X}_i^{(p'q')}, {\bf X}_k^{(p'q')})])^2.
\end{multline}
Since $|h_\tau|
\leq
1$, the first sum in \eqref{multline1} is bounded by a term of order
$O(m^2)$.  Considering the second sum, an expectation
\begin{equation} \label{expectation2}
\mathbb{E}[h_\tau({\bf X}_i^{(pq)}, {\bf X}_j^{(pq)}) h_\tau({\bf X}_i^{(p'q')}, {\bf X}_k^{(p'q')})]
\end{equation}
with $\{p,
q\} \not = \{p' , q'\}$ equals $0$ when $\theta =
0$ by independence.  Moreover, ${\bf X}^{(pq)}_i$, ${\bf
  X}^{(pq)}_j$, ${\bf X}^{(p'q')}_i$, and ${\bf
  X}^{(p'q')}_k$ jointly follow an
$8$-variate
normal distribution with block diagonal covariance matrix as in
\lemref{Taylor}. By that lemma and the fact that $\rho
= \sin(\pi
\theta/2)$, we obtain that \eqref{expectation2} is bounded in absolute
value by $\theta$
times a constant, hence the second sum in \eqref{multline1} is bounded
in absolute value by a term equal to $O(m^2)
\|\Theta_\tau\|_2^2$.  Gathering the bounds for the two sums in
\eqref{multline1} gives the claimed bound for the absolute value of
term $(1)$.

\medskip

{\em Term $(2)$:} We claim that
$|(2)| \leq O(m^2)\|\Theta_\tau\|_2^2$.  Indeed, since
$|h_\tau| \leq 1$, it is easy show that $(2)$ is bounded in absolute
value by
${m \choose 2}^2 \theta^2 = {m \choose 2}\|\Theta_\tau\|_2^2 =
O(m^2)\|\Theta_\tau\|_2^2 $.

\medskip

{\em Terms $(3)$ and $(4)$:} We claim that
$|(3)|, |(4)| \leq O(m^2) (\|\Theta_\tau\|_2 + \|\Theta_\tau\|_2^2)$.
We give details for the proof of bound for $|(3)|$.  The bound for
$(4)$ is analogous.  We write $(3)$ as
\begin{multline} \label{multline2}
\sum_{ \substack{|\{p, q \} \cap \{p', q'\}| \geq 1\\ 1 \leq p < q \leq m\\ 1 \leq p' < q' \leq m }} 2 \theta \mathbb{E}[h_\tau({\bf X}_i^{(pq)}, {\bf X}_k^{(pq)}) h_\tau({\bf X}_i^{(p'q')}, {\bf X}_j^{(p'q')}) h_\tau({\bf X}_j^{(pq)}, {\bf X}_l^{(pq)})] + \\
\sum_{\substack{|\{p, q \} \cap \{p', q'\}| = 0\\ 1 \leq p < q \leq m\\ 1 \leq p' < q' \leq m }} 2 \theta\mathbb{E}[h_\tau({\bf X}_i^{(pq)}, {\bf X}_k^{(pq)}) h_\tau({\bf X}_i^{(p'q')}, {\bf X}_j^{(p'q')}) h_\tau({\bf X}_j^{(pq)}, {\bf X}_l^{(pq)})],
\end{multline}
where the first sum is bounded by
$2 \theta {m \choose 2}[ {m \choose 2}- {m -2 \choose 2}]
=O(m^2)\|\Theta_\tau\|_2$ because $|h_\tau| \leq 1$.  The expectation
\[
\mathbb{E}[h_\tau({\bf X}_i^{(pq)}, {\bf X}_k^{(pq)}) h_\tau({\bf X}_i^{(p'q')}, {\bf X}_j^{(p'q')}) h_\tau({\bf X}_j^{(pq)}, {\bf X}_l^{(pq)})]
\]
equals $0$ when $\{p, q \} \cap \{p', q'\}| = 0$, and \lemref{Taylor}
can be invoked to show the second sum in \eqref{multline2} is bounded
in absolute value by $O(m^2) \|\Theta_\tau\|^2_2$.

\medskip

Having established the bounds for the terms $(1) - (4)$ in
\eqref{multline0}, we find that when $c = 2$ the left hand side of
\eqref{c=again} is less than
\[
 \frac{O(m^2)n^{-2} (1 + \|\Theta_\tau\|_2 + \|\Theta_\tau\|_2^2)}{ ( \frac{4m}{9n}  z_{1 - \alpha})^2 - \frac{8m}{9n} z_{1 - \alpha}\|\Theta_\tau\|_2^2 + \|\Theta_\tau\|_2^4},
\]
which, under $\frac{m}{n} \longrightarrow \gamma$, can be made to be less than $\frac{1 - \beta}{5}$ when $\|\Theta_\tau\|_2$ and $n$ are large enough.
\end{proof}
\begin{proof}[Proof of the claim when $c \ge 3$] For
  $c=3$ or $c=4$, we may proceed similarly, using again the
  boundedness of $h_\tau$ and 
  \lemref{Taylor}.  We note that if $c=3$, then
  $|f_{{\bf i , j}}| \leq O(m^3) (1 + \|\Theta_\tau\|_2)$ and omit 
  further details.
\end{proof}

\begin{proof}[Proof of \thmref{HLnotWork}]
 By \cite{HanLiu2014}, under $H_0$, 
\begin{equation} \label{HLconv}
\sup_{t \in \mathbb{R}}\left|P_0\left( n(4 \zeta_1^{h_\tau})^{-1} (S^{\max}_\tau)^2   - 4 \log m +  \log \log m \leq t\right) - \exp( - \exp(-t/2) /\sqrt{8 \pi})\right| \longrightarrow 0
\end{equation}
as $\frac{m}{n} \longrightarrow \gamma$ (recall that $\frac{m}{n} \longrightarrow \gamma$ is a special case of the regime $\log m = o(n^{1/3})$ considered in \cite{HanLiu2014}), where $\zeta_1^{h_\tau} = 1/9$ as given in \tabref{degen_tab} and $\exp( - \exp(-t/2) /\sqrt{8 \pi})$ is the distribution function of a Gumbel-distributed random variable $G$. Defining $H(t)$ be the distribution function of the transformed random variable $\sqrt{4 \zeta_1^{h_\tau}(G + 4 \log m - \log \log m)}$, \eqref{HLconv} is equivalent to 
\begin{equation} \label{HLconv2}
\sup_{t \in \mathbb{R}} \left|  P_0 (\sqrt{n}S^{\max}_\tau \leq t) - H(t)   \right| \longrightarrow 0
\end{equation}
  Hence, the critical value of the test $\phi_\alpha(S_\tau^{\max})$ is calibrated by the $(1 - \alpha)$-quantile of the distribution function $H(\cdot)$, and $H_0$ is rejected if $\sqrt{n} S^{\max}_\tau$ exceeds this value. 

Suppose, towards a contradiction, that the constant $C$ indicated by the theorem exists, and let ${\bf X}$ be $m$-variate normal with $\theta_\tau^{(pq)}  = \theta_\tau= \frac{\sqrt{2}C}{\gamma' n}$ for some $\gamma' < \gamma$ and all $1\leq p < q \leq  m$,  such that $\|\Theta_\tau\|_2 \geq C$ as $m, n \longrightarrow \infty$. By $\rho^{(pq)} = \rho=\sin\big(\frac{\pi}{2}\theta_\tau^{(pq)}\big)$ \citep{ellipKendall}, the distribution of ${\bf X}$ belongs to the set of equicorrelation alternatives $\mathcal{N}_m^{\text{equi}}(\|\Theta_\tau \|_2 \geq
  C)$, and we will use $P(\cdot)$ to denote the probability operator under this distribution. To finish the proof it suffices to show that 
\begin{equation} \label{conv_2_extreme}
\sup_{t \in \mathbb{R}}\left|P\left(\sqrt{n}\max_{ 1\leq p < q \leq m} \left| U_{h_\tau}^{(pq)}- \sqrt{2}C(\gamma' n)^{-1}\right|\leq t\right) - H(t)\right| \longrightarrow 0
\end{equation}
as $\frac{m}{n} \longrightarrow \infty$. Since 
\[
\left|\sqrt{n}\max_{ 1\leq p < q \leq m} \left| U_{h_\tau}^{(pq)}- \sqrt{2}C(\gamma' n)^{-1}\right| -
\sqrt{n}S^{\max}_\tau\right| \leq \frac{\sqrt{2}C}{\gamma' \sqrt{n}}
\] 
by reverse triangular inequality, if \eqref{conv_2_extreme} is true,
by uniform continuity  of $H(\cdot)$ (as $H(\cdot)$ is a continuous
distribution function)  and the fact that $\frac{\sqrt{2}C}{\gamma'
  \sqrt{n}} \longrightarrow 0$ , elementary arguments show that
\[
\sup_{t \in \mathbb{R}}\left|P(\sqrt{n}|S^{\max}_\tau| \leq t) - H(t)\right| \longrightarrow 0
\]
as $\frac{m}{n} \longrightarrow \gamma$. As such,  the asymptotic power of our test $\phi_\alpha(S_\tau^{\max})$, under this alternative, also equals $\alpha$, leading to the desired contradiction since $\beta > \alpha$.

It remains to  show \eqref{conv_2_extreme}. Since the value of
$S_\tau^{\max}$ depends only on the rank vectors ${\bf R}^{(1)},
\dots, {\bf R}^{(m)}$, we can assume without loss of generality that the components of ${\bf X}$ have variance $1$. Let $\Gamma^0$ and $\Gamma$ be two ${m \choose 2} \times {m \choose 2}$ matrices, whose components are indexed by $((p, q), (p', q'))$ for $1 \leq p< q \leq m$, $1 \leq p'< q' \leq m$, and defined as
\begin{align*}
\Gamma_{(p, q), (p', q')}^0 &:= \mathbb{E}_0[h_\tau({\bf X}_i^{(pq)}, {\bf X}_j^{(pq)}) h_\tau({\bf X}_i^{(p'q')}, {\bf X}_k^{(p'q')})] 
 \quad \text{and} \quad \\
\Gamma_{(p, q), (p', q')} &:= \mathbb{E}[h_\tau({\bf X}_i^{(pq)}, {\bf X}_j^{(pq)}) h_\tau({\bf X}_i^{(p'q')}, {\bf X}_k^{(p'q')})] - \theta_\tau^2 \\
&=  \mathbb{E}[h_\tau({\bf X}_i^{(pq)}, {\bf X}_j^{(pq)}) h_\tau({\bf X}_i^{(p'q')}, {\bf X}_k^{(p'q')})] - \frac{2C^2}{(\gamma'n)^2}. 
\end{align*}
Here, again, $\mathbb{E}[\cdot]$ is the expectation operator under the
alternative distribution of ${\bf X}$. We note that $\Gamma^0$ and
$\Gamma$ are in fact the covariance matrices of the H\'{a}jek
projection of the vector-valued U-statistic ${\bf U}_\tau$ defined in
the proof of \thmref{beatMax}.  Applying Theorem 2.1 from
\citet{chen2016sup}, we obtain that 
\begin{align}
&\sup_{t \in \mathbb{R}}\left|P_0(\sqrt{n}|S^{\max}_\tau| \leq t) - F_0(t)\right| \longrightarrow 0
\quad \text{and} \label{chen1}\\
\quad
&\sup_{t \in \mathbb{R}}\left|P(\sqrt{n}\max_{ 1\leq p < q \leq m} | U_{h_\tau}^{(pq)}- \sqrt{2}C(\gamma' n)^{-1}| \leq t) - F(t)\right| \longrightarrow 0
\label{chen2}
\end{align}
as $\frac{m}{n} \longrightarrow \gamma$, where $F_0(\cdot)$ and $F(\cdot)$ are, respectively, the cumulative distribution functions of $\|Z_0\|_\infty$ and $\|Z\|_\infty$ for multivariate normal random vector $Z_0 \sim \mathcal{N}_{m \choose 2}(0, \Gamma^0)$ and $Z \sim \mathcal{N}_{m \choose 2}(0, \Gamma)$. Now by \lemref{Taylor}, for each pair $((p, q), (p', q'))$, 
\[
|\Gamma_{(p, q), (p', q')} - \Gamma^0_{(p, q), (p', q')}| \lesssim  \rho +  \theta_\tau^2 = O(n^{-1}),
\]
and hence by a comparison lemma in \citet[Lemma 3.1]{MR3161448},
\begin{equation}\label{GaussComparison}
\sup_{t \in \mathbb{R}}|F_0(t)-F(t) |
\lesssim  O(n^{-1/3})\left(1\vee \log ( O(m n))\right)^{2/3}, 
\end{equation}
where the right hand side of \eqref{GaussComparison} converges to $0$ as $\frac{m}{n} \longrightarrow \gamma$. Collecting \eqref{HLconv2}, \eqref{chen1}, \eqref{chen2} and \eqref{GaussComparison} leads to \eqref{conv_2_extreme}. 
\end{proof}

\bibliographystyle{ba}
\bibliography{Kendall_bib}

\begin{thebibliography}{37}
\newcommand{\enquote}[1]{``#1''}
\expandafter\ifx\csname natexlab\endcsname\relax\def\natexlab#1{#1}\fi
\expandafter\ifx\csname url\endcsname\relax
  \def\url#1{{\tt #1}}\fi
\expandafter\ifx\csname urlprefix\endcsname\relax\def\urlprefix{URL }\fi

\bibitem[{Anderson(2003)}]{andersonTedMulti}
Anderson, T.~W. (2003).
\newblock {\em An introduction to multivariate statistical analysis\/}.
\newblock Wiley Series in Probability and Statistics. Wiley-Interscience [John
  Wiley \& Sons], Hoboken, NJ, third edition.

\bibitem[{Bergsma and Dassios(2014)}]{bergsma:2014}
Bergsma, W. and Dassios, A. (2014).
\newblock \enquote{A consistent test of independence based on a sign covariance
  related to {K}endall's tau.}
\newblock {\em Bernoulli\/}, 20(2): 1006--1028.

\bibitem[{Borovskikh(1996)}]{UstatBanach}
Borovskikh, Y.~V. (1996).
\newblock {\em {$U$}-statistics in {B}anach spaces\/}.
\newblock VSP, Utrecht.

\bibitem[{Cai and Jiang(2011)}]{cai2011}
Cai, T.~T. and Jiang, T. (2011).
\newblock \enquote{Limiting laws of coherence of random matrices with
  applications to testing covariance structure and construction of compressed
  sensing matrices.}
\newblock {\em Ann. Statist.\/}, 39(3): 1496--1525.

\bibitem[{Cai and Ma(2013)}]{CaiAndMa}
Cai, T.~T. and Ma, Z. (2013).
\newblock \enquote{Optimal hypothesis testing for high dimensional covariance
  matrices.}
\newblock {\em Bernoulli\/}, 19(5B): 2359--2388.

\bibitem[{Chen et~al.(2010)Chen, Zhang, and Zhong}]{Chen2010}
Chen, S.~X., Zhang, L.-X., and Zhong, P.-S. (2010).
\newblock \enquote{Tests for high-dimensional covariance matrices.}
\newblock {\em J. Amer. Statist. Assoc.\/}, 105(490): 810--819.

\bibitem[{Chen(2016)}]{chen2016sup}
Chen, X. (2016).
\newblock \enquote{Gaussian approximation for the sup-norm of high-dimensional
  matrix-variate U-statistics and its applications.}
\newblock ArXiv:1602.00199.

\bibitem[{Chen and Shao(2012)}]{ChenShao}
Chen, Y. and Shao, Q.-M. (2012).
\newblock \enquote{Berry-{E}sseen inequality for unbounded exchangeable pairs.}
\newblock In {\em Probability approximations and beyond\/}, volume 205 of {\em
  Lecture Notes in Statist.\/}, 13--30. Springer, New York.

\bibitem[{Chernozhukov et~al.(2013)Chernozhukov, Chetverikov, and
  Kato}]{MR3161448}
Chernozhukov, V., Chetverikov, D., and Kato, K. (2013).
\newblock \enquote{Gaussian approximations and multiplier bootstrap for maxima
  of sums of high-dimensional random vectors.}
\newblock {\em Ann. Statist.\/}, 41(6): 2786--2819.

\bibitem[{Christensen(2005)}]{nlogn}
Christensen, D. (2005).
\newblock \enquote{Fast algorithms for the calculation of {K}endall's
  {$\tau$}.}
\newblock {\em Comput. Statist.\/}, 20(1): 51--62.

\bibitem[{H{\'a}jek et~al.(1999)H{\'a}jek, {\v{S}}id{\'a}k, and
  Sen}]{SidakAndHajek}
H{\'a}jek, J., {\v{S}}id{\'a}k, Z., and Sen, P.~K. (1999).
\newblock {\em Theory of rank tests\/}.
\newblock Probability and Mathematical Statistics. Academic Press, Inc., San
  Diego, CA, second edition.

\bibitem[{Hall and Heyde(1980)}]{HeydeAndHall}
Hall, P. and Heyde, C.~C. (1980).
\newblock {\em Martingale limit theory and its application\/}.
\newblock Academic Press, Inc. [Harcourt Brace Jovanovich, Publishers], New
  York-London.
\newblock Probability and Mathematical Statistics.

\bibitem[{Han and Liu(2014)}]{HanLiu2014}
Han, F. and Liu, H. (2014).
\newblock \enquote{Distribution-free tests of independence with applications to
  testing more structures.}
\newblock ArXiv:1410.4179.

\bibitem[{Heller and Heller(2016)}]{Heller2016}
Heller, Y. and Heller, R. (2016).
\newblock \enquote{Computing the {B}ergsma {D}assios sign-covariance.}
\newblock ArXiv:1605.08732.

\bibitem[{Hoeffding(1948{\natexlab{a}})}]{hoeffdingU}
Hoeffding, W. (1948{\natexlab{a}}).
\newblock \enquote{A class of statistics with asymptotically normal
  distribution.}
\newblock {\em Ann. Math. Statistics\/}, 19: 293--325.

\bibitem[{Hoeffding(1948{\natexlab{b}})}]{hoeffdingIndep}
--- (1948{\natexlab{b}}).
\newblock \enquote{A non-parametric test of independence.}
\newblock {\em Ann. Math. Statistics\/}, 19: 546--557.

\bibitem[{Horn and Johnson(2013)}]{HornAndJohnson}
Horn, R.~A. and Johnson, C.~R. (2013).
\newblock {\em Matrix analysis\/}.
\newblock Cambridge University Press, Cambridge, second edition.

\bibitem[{Jiang(2004)}]{jiang2004}
Jiang, T. (2004).
\newblock \enquote{The asymptotic distributions of the largest entries of
  sample correlation matrices.}
\newblock {\em Ann. Appl. Probab.\/}, 14(2): 865--880.

\bibitem[{Jiang and Qi(2015)}]{JiangQi}
Jiang, T. and Qi, Y. (2015).
\newblock \enquote{Likelihood ratio tests for high-dimensional normal
  distributions.}
\newblock {\em Scand. J. Stat.\/}, 42(4): 988--1009.

\bibitem[{John(1972)}]{john1972}
John, S. (1972).
\newblock \enquote{The distribution of a statistic used for testing sphericity
  of normal distributions.}
\newblock {\em Biometrika\/}, 59: 169--173.

\bibitem[{Kruskal(1958)}]{kruskal1958}
Kruskal, W.~H. (1958).
\newblock \enquote{Ordinal measures of association.}
\newblock {\em J. Amer. Statist. Assoc.\/}, 53: 814--861.

\bibitem[{Ledoit and Wolf(2002)}]{ledoit:2002}
Ledoit, O. and Wolf, M. (2002).
\newblock \enquote{Some hypothesis tests for the covariance matrix when the
  dimension is large compared to the sample size.}
\newblock {\em Ann. Statist.\/}, 30(4): 1081--1102.

\bibitem[{Lehmann and Casella(1998)}]{TPE}
Lehmann, E.~L. and Casella, G. (1998).
\newblock {\em Theory of point estimation\/}.
\newblock Springer Texts in Statistics. Springer-Verlag, New York, second
  edition.

\bibitem[{Li et~al.(2010)Li, Liu, and Rosalsky}]{li2010}
Li, D., Liu, W.-D., and Rosalsky, A. (2010).
\newblock \enquote{Necessary and sufficient conditions for the asymptotic
  distribution of the largest entry of a sample correlation matrix.}
\newblock {\em Probab. Theory Related Fields\/}, 148(1-2): 5--35.

\bibitem[{Li et~al.(2012)Li, Qi, and Rosalsky}]{li2010followup}
Li, D., Qi, Y., and Rosalsky, A. (2012).
\newblock \enquote{On {J}iang's asymptotic distribution of the largest entry of
  a sample correlation matrix.}
\newblock {\em J. Multivariate Anal.\/}, 111: 256--270.

\bibitem[{Lindskog et~al.(2003)Lindskog, Mc{N}eil, and Schmock}]{ellipKendall}
Lindskog, F., Mc{N}eil, A., and Schmock, U. (2003).
\newblock {\em Credit risk\/}, chapter Kendall's tau for elliptical
  distributions, 149--156.
\newblock Physica-Verlag, Heidelberg.

\bibitem[{Liu et~al.(2008)Liu, Lin, and Shao}]{liu2008}
Liu, W.-D., Lin, Z., and Shao, Q.-M. (2008).
\newblock \enquote{The asymptotic distribution and {B}erry-{E}sseen bound of a
  new test for independence in high dimension with an application to stochastic
  optimization.}
\newblock {\em Ann. Appl. Probab.\/}, 18(6): 2337--2366.

\bibitem[{Mao(2014)}]{mao2014}
Mao, G. (2014).
\newblock \enquote{A new test of independence for high-dimensional data.}
\newblock {\em Statist. Probab. Lett.\/}, 93: 14--18.

\bibitem[{Nagao(1973)}]{nagao:1973}
Nagao, H. (1973).
\newblock \enquote{On some test criteria for covariance matrix.}
\newblock {\em Ann. Statist.\/}, 1: 700--709.

\bibitem[{Nandy et~al.(2016)Nandy, Weihs, and Drton}]{Nandy2016}
Nandy, P., Weihs, L., and Drton, M. (2016).
\newblock \enquote{Large-sample theory for the {B}ergsma-{D}assios sign
  covariance.}
\newblock {\em Electron. J. Stat.\/}, 10(2): 2287--2311.

\bibitem[{Oja(2010)}]{Oja}
Oja, H. (2010).
\newblock {\em Multivariate nonparametric methods with {R}\/}, volume 199 of
  {\em Lecture Notes in Statistics\/}.
\newblock Springer, New York.
\newblock An approach based on spatial signs and ranks.

\bibitem[{Schott(2005)}]{Schott2005}
Schott, J.~R. (2005).
\newblock \enquote{Testing for complete independence in high dimensions.}
\newblock {\em Biometrika\/}, 92(4): 951--956.

\bibitem[{Serfling(1980)}]{Serfling1980}
Serfling, R.~J. (1980).
\newblock {\em Approximation theorems of mathematical statistics\/}.
\newblock John Wiley \& Sons, Inc., New York.
\newblock Wiley Series in Probability and Mathematical Statistics.

\bibitem[{van~der Vaart(1998)}]{VWbook}
van~der Vaart, A.~W. (1998).
\newblock {\em Asymptotic statistics\/}, volume~3 of {\em Cambridge Series in
  Statistical and Probabilistic Mathematics\/}.
\newblock Cambridge University Press, Cambridge.

\bibitem[{Wang et~al.(2013)Wang, Zou, and Wang}]{SpearmanTest}
Wang, G., Zou, C., and Wang, Z. (2013).
\newblock \enquote{A necessary test for complete independence in high
  dimensions using rank-correlations.}
\newblock {\em J. Multivariate Anal.\/}, 121: 224--232.

\bibitem[{Weihs et~al.(2016)Weihs, Drton, and Leung}]{n2logn}
Weihs, L., Drton, M., and Leung, D. (2016).
\newblock \enquote{Efficient computation of the Bergsma--Dassios sign
  covariance.}
\newblock {\em Computational Statistics\/}, 31(1): 315--328.

\bibitem[{Zhou(2007)}]{zhou2007}
Zhou, W. (2007).
\newblock \enquote{Asymptotic distribution of the largest off-diagonal entry of
  correlation matrices.}
\newblock {\em Trans. Amer. Math. Soc.\/}, 359(11): 5345--5363.

\end{thebibliography}

\begin{table}[ht]
\centering
\caption[Table caption text]{Simulated size of tests when $X^{(1)}, \dots, X^{(m)}$ are i.i.d.~$t_{3,2}$ data. For each combination of $(m, n)$ and each test, the sizes are computed from $5000$ independently generated datasets.}
\begin{tabular}{c|c|*{8}{c}}
  \hline
Statistics &  $ n \backslash m$  & 4 & 8 & 16 & 32 & 64 & 128 & 256 & 512 \\ 
  \hline
  $S_r$&\multirow{7}{*}{16} & 0.060 & 0.065 & 0.062 & 0.067 & 0.071 & 0.063 & 0.071 &0.072\\ 
  $S_\tau$ &  & 0.069 & 0.079 & 0.080 & 0.090 & 0.094 & 0.093 & 0.086& 0.089 \\ 
  $T_\tau$  &  & 0.088 & 0.096 & 0.102 & 0.113 & 0.120 & 0.110 & 0.113 &0.114\\ 
  $S_{\rho_s}$ &  & 0.046 & 0.050 & 0.052 & 0.057 & 0.059 & 0.053 & 0.053 &0.055\\ 
$ T_{\hat{\rho}_s}$ &  & 0.079 & 0.093 & 0.099 & 0.107 & 0.111 & 0.107 & 0.104 &0.109\\ 
 $S_{t^*}$ &  & 0.079 & 0.098 & 0.115 & 0.112 & 0.123 & 0.122 & 0.111 &0.121\\ 
 $Z_{t^*}$  &  & 0.079 & 0.092 & 0.098 & 0.098 & 0.111 & 0.104 & 0.096 & 0.099 \\ 
\hline
   $S_r$&\multirow{7}{*}{32}  & 0.066 & 0.078 & 0.076 & 0.081 & 0.076 & 0.089 & 0.079& 0.086\\ 
   $S_\tau$ &  & 0.059 & 0.069 & 0.067 & 0.077 & 0.073 & 0.071 & 0.070& 0.077\\ 
$T_\tau$&  & 0.064 & 0.078 & 0.075 & 0.087 & 0.081 & 0.082 & 0.080& 0.086 \\ 
  $S_{\rho_s}$ &  & 0.047 & 0.054 & 0.052 & 0.061 & 0.056 & 0.053 & 0.056 &0.058\\ 
$ T_{\hat{\rho}_s}$&  & 0.062 & 0.075 & 0.072 & 0.082 & 0.080 & 0.079 & 0.072&0.083 \\ 
 $S_{t^*}$ &  & 0.056 & 0.081 & 0.085 & 0.090 & 0.088 & 0.078 & 0.087&0.085 \\ 
  $Z_{t^*}$&  & 0.062 & 0.069 & 0.067 & 0.081 & 0.077 & 0.077 & 0.079&0.078 \\ 
\hline
$S_r$ & \multirow{7}{*}{64} & 0.073 & 0.083 & 0.095 & 0.095 & 0.102 & 0.097 & 0.096 &0.091\\ 
  $S_\tau$  &  & 0.057 & 0.061 & 0.062 & 0.065 & 0.058 & 0.058 & 0.065& 0.059\\ 
 $T_\tau$ &  & 0.058 & 0.064 & 0.066 & 0.069 & 0.061 & 0.064 & 0.067& 0.062\\ 
 $S_{\rho_s}$ &  & 0.048 & 0.053 & 0.055 & 0.055 & 0.050 & 0.052 & 0.057 & 0.048\\ 
$ T_{\hat{\rho}_s}$ &  & 0.057 & 0.061 & 0.065 & 0.067 & 0.060 & 0.064 & 0.059&0.062 \\ 
 $S_{t^*}$ &  & 0.045 & 0.074 & 0.064 & 0.070 & 0.068 & 0.070 & 0.069&0.063 \\ 
  $Z_{t^*}$&  & 0.054 & 0.061 & 0.058 & 0.064 & 0.065 & 0.062 & 0.063 &0.064\\
\hline 
$S_r$ &  \multirow{7}{*}{128} & 0.072 & 0.089 & 0.107 & 0.112 & 0.101 & 0.109 & 0.110& 0.115 \\ 
$S_\tau$ &  & 0.047 & 0.061 & 0.053 & 0.061 & 0.052 & 0.056 & 0.053&0.055 \\ 
 $T_\tau$&  & 0.049 & 0.063 & 0.053 & 0.064 & 0.054 & 0.060 & 0.054& 0.058 \\ 
 $S_{\rho_s}$ &  & 0.043 & 0.059 & 0.049 & 0.056 & 0.048 & 0.052 & 0.048 &0.051\\ 
$ T_{\hat{\rho}_s}$ &  & 0.048 & 0.062 & 0.052 & 0.060 & 0.055 & 0.057  & 0.058&0.054\\
 $S_{t^*}$ &  & 0.041 & 0.066 & 0.070 & 0.071 & 0.060 & 0.058 & 0.052& 0.058\\ 
  $Z_{t^*}$&  & 0.050 & 0.055 & 0.058 & 0.062 & 0.053 & 0.056 & 0.055 &0.055\\ 
\hline
\end{tabular}
\label{tab:tncsize}
\end{table}


\begin{table}[ht]
\centering
\caption{Simulated power of tests when
 data are generated from the multivariate normal (MVN),  multivariate t  (MVT) and multivariate power exponential (MVPE) distributions  with three different values for the dependency signal $\|\Theta_\tau\|^2_2$. 
All pairwise (population) Kendall's tau correlations $\theta^{(pq)}_\tau, 1\leq p <q \leq m$ are equal to the same value $\theta$ so that $\|\Theta_\tau\|^2_2 ={m \choose 2} \theta^2$. For each combination of $(m, n)$ and each test, the  power is calculated from $500$ independently generated datasets.}
\begin{tabular}{c|c| *{3}{c}|*{3}{c}|*{3}{c}}
    \hline
 &  &   \multicolumn{3}{c}{ $\|\Theta_\tau\|^2_2=0.1$}   &\multicolumn{3}{|c|}{$\|\Theta_\tau\|^2_2=0.3$}&\multicolumn{3}{c}{ $\|\Theta_\tau\|^2_2=0.7$} \\ 
\hline
Statistic & $ n \backslash m$ &64 & 128 & 256 & 64 & 128 & 256 & 64 & 128 & 256 \\  
\hline
\hline
\multicolumn{11}{c}{MVN}\\
  \hline
\hline
  $S_\tau$ & \multirow{5}{*}{64} & 0.094 & 0.054 & 0.070 & 0.182 & 0.108 & 0.092 & 0.424 & 0.218 & 0.114 \\ 
   $T_\tau$  &  & 0.100 & 0.068 & 0.078 & 0.194 & 0.110 & 0.090 & 0.426 & 0.228 & 0.134 \\ 
   $S_\tau^{\max}$ &  & 0.046 & 0.046 & 0.020 & 0.040 & 0.058 & 0.046 & 0.056 & 0.054 & 0.058 \\ 
  $S_r$ &  & 0.070 & 0.058 & 0.070 & 0.178 & 0.114 & 0.080 & 0.448 & 0.222 & 0.110 \\ 
  \text{Cai \& Ma}  &  & 0.076 & 0.076 & 0.060 & 0.190 & 0.116 & 0.086 & 0.456 & 0.278 & 0.130 \\ 
\hline
  $S_\tau$ &\multirow{6}{*}{128}  & 0.130 & 0.086 & 0.056 & 0.342 & 0.164 & 0.080 & 0.794 & 0.444 & 0.176 \\ 
   $T_\tau$  &  & 0.132 & 0.088 & 0.058 & 0.352 & 0.174 & 0.084 & 0.806 & 0.446 & 0.186 \\ 
   $S_\tau^{\max}$ &  & 0.062 & 0.064 & 0.052 & 0.046 & 0.058 & 0.060 & 0.094 & 0.058 & 0.060 \\ 
  $S_r$ &  & 0.142 & 0.072 & 0.066 & 0.378 & 0.172 & 0.084 & 0.832 & 0.514 & 0.198 \\ 
 \text{LRT}&  &0.094& -- & -- & 0.204 & -- & -- &0.396 & -- & -- \\  \text{Cai \& Ma}  &  & 0.134 & 0.064 & 0.068 & 0.386 & 0.172 & 0.096 & 0.834 & 0.520 & 0.204 \\ 
\hline
  $S_\tau$ & \multirow{6}{*}{256} & 0.256 & 0.108 & 0.096 & 0.780 & 0.358 & 0.198 & 0.992 & 0.838 & 0.476 \\ 
   $T_\tau$  &  & 0.262 & 0.114 & 0.094 & 0.782 & 0.364 & 0.200 & 0.992 & 0.830 & 0.470 \\ 
   $S_\tau^{\max}$ &  & 0.048 & 0.050 & 0.046 & 0.064 & 0.056 & 0.058 & 0.124 & 0.082 & 0.052 \\ 
  $S_r$ &  & 0.282 & 0.126 & 0.094 & 0.816 & 0.420 & 0.224 & 1.000 & 0.880 & 0.502 \\ 
 \text{LRT}&  &0.166 &0.086  & -- & 0.450 &0.152  & -- &0.876 & 0.370 & -- \\
  \text{Cai \& Ma}  &  & 0.282 & 0.124 & 0.110 & 0.812 & 0.422 & 0.234 & 1.000 & 0.882 & 0.494 \\ 
\hline
\hline
\multicolumn{11}{c}{MVT}\\
  \hline
\hline
  $S_\tau$ &\multirow{3}{*}{64} & 0.506 & 0.866 & 0.998 & 0.628 & 0.896 & 0.998 & 0.802 & 0.926 & 0.998 \\ 
   $T_\tau$  &  & 0.130 & 0.080 & 0.078 & 0.232 & 0.128 & 0.096 & 0.488 & 0.234 & 0.114 \\ 
   $S_\tau^{\max}$ &  & 0.080 & 0.066 & 0.060 & 0.086 & 0.074 & 0.060 & 0.110 & 0.074 & 0.068 \\ 
  \hline
  $S_\tau$ & \multirow{3}{*}{128}  & 0.554 & 0.912 & 0.998 & 0.806 & 0.948 & 1.000 & 0.962 & 0.990 & 1.000 \\ 
   $T_\tau$  &  & 0.130 & 0.102 & 0.094 & 0.384 & 0.210 & 0.114 & 0.796 & 0.494 & 0.244 \\ 
   $S_\tau^{\max}$ &  & 0.064 & 0.060 & 0.054 & 0.080 & 0.064 & 0.066 & 0.114 & 0.074 & 0.076 \\ 
 \hline
  $S_\tau$ & \multirow{3}{*}{256}  & 0.694 & 0.924 & 1.000 & 0.972 & 0.992 & 1.000 & 1.000 & 1.000 & 1.000 \\ 
   $T_\tau$  &  & 0.268 & 0.130 & 0.084 & 0.740 & 0.348 & 0.188 & 0.998 & 0.832 & 0.456 \\ 
   $S_\tau^{\max}$ &  & 0.076 & 0.062 & 0.072 & 0.110 & 0.066 & 0.076 & 0.186 & 0.102 & 0.078 \\ 
\hline
\hline
\multicolumn{11}{c}{MVPE}\\
  \hline
\hline
  $S_\tau$ &  \multirow{3}{*}{64} & 0.052 & 0.042 & 0.022 & 0.128 & 0.056 & 0.044 & 0.358 & 0.122 & 0.060 \\ 
   $T_\tau$  &  & 0.114 & 0.076 & 0.076 & 0.222 & 0.110 & 0.082 & 0.462 & 0.216 & 0.134 \\ 
   $S_\tau^{\max}$ &  & 0.056 & 0.050 & 0.032 & 0.046 & 0.050 & 0.034 & 0.062 & 0.054 & 0.036 \\ 
  \hline
  $S_\tau$ & \multirow{3}{*}{128}  & 0.074 & 0.038 & 0.028 & 0.274 & 0.094 & 0.036 & 0.744 & 0.314 & 0.112 \\ 
   $T_\tau$  &  & 0.128 & 0.084 & 0.056 & 0.398 & 0.174 & 0.096 & 0.836 & 0.454 & 0.214 \\ 
   $S_\tau^{\max}$ &  & 0.038 & 0.054 & 0.050 & 0.050 & 0.056 & 0.044 & 0.084 & 0.060 & 0.046 \\ 
  \hline
  $S_\tau$ & \multirow{3}{*}{256}  & 0.134 & 0.066 & 0.050 & 0.638 & 0.256 & 0.102 & 0.992 & 0.794 & 0.306 \\ 
   $T_\tau$  &  & 0.232 & 0.152 & 0.100 & 0.768 & 0.370 & 0.184 & 0.998 & 0.862 & 0.450 \\ 
   $S_\tau^{\max}$ &  & 0.052 & 0.036 & 0.060 & 0.074 & 0.040 & 0.060 & 0.120 & 0.064 & 0.062 \\ 
\hline
   \hline
\end{tabular}
\label{tab:empEvidence}
\end{table}

\begin{table}[ht]
\centering
\caption[Table caption text]{Simulated power of tests when
 data are generated from multivariate normal (MVN),  multivariate t  (MVT) and multivariate power exponential (MVPE) distributions  with three different values for the dependency signal $\|\Theta_\tau\|^2_2$. 
For each distribution, the scatter
matrix $\Sigma = (\sigma_{ij})$ is a pentadiagonal matrix with $1$'s
on the diagonal and equal values for the non-zero entries $\sigma_{ij}$,
$1 \leq |i - j| \leq 2$. 
 For each combination of $(m, n)$ and each test, the power is calculated from $500$ independently generated datasets.}
\begin{tabular}{c|c| *{3}{c}|*{3}{c}|*{3}{c}}
  \hline
 &  &   \multicolumn{3}{c}{ $\|\Theta_\tau\|^2_2=0.1$}   &\multicolumn{3}{|c|}{$\|\Theta_\tau\|^2_2=0.3$}&\multicolumn{3}{c}{ $\|\Theta_\tau\|^2_2=0.7$} \\ 
\hline
Statistic & $ n \backslash m$ &64 & 128 & 256 & 64 & 128 & 256 & 64 & 128 & 256 \\  
\hline
\hline
\multicolumn{11}{c}{MVN}\\
  \hline
\hline

  $S_\tau$ & \multirow{5}{*}{64} & 0.096 & 0.070 & 0.062 & 0.170 & 0.108 & 0.084 & 0.462 & 0.210 & 0.120 \\ 
  $T_\tau$ &  & 0.100 & 0.080 & 0.068 & 0.176 & 0.118 & 0.090 & 0.462 & 0.224 & 0.122 \\ 
  $S_\tau^{\max}$ &  & 0.056 & 0.062 & 0.048 & 0.064 & 0.036 & 0.048 & 0.090 & 0.050 & 0.056 \\ 
 $S_r$ &  & 0.068 & 0.070 & 0.062 & 0.162 & 0.092 & 0.078 & 0.478 & 0.206 & 0.116 \\ 
  \text{Cai \& Ma} &  & 0.086 & 0.074 & 0.060 & 0.176 & 0.104 & 0.086 & 0.500 & 0.228 & 0.132 \\ 

\hline

   $S_\tau$& \multirow{6}{*}{128}  & 0.140 & 0.078 & 0.072 & 0.390 & 0.176 & 0.104 & 0.862 & 0.434 & 0.190 \\ 
   $T_\tau$&  & 0.138 & 0.084 & 0.070 & 0.398 & 0.186 & 0.100 & 0.870 & 0.438 & 0.186 \\ 
  $S_\tau^{\max}$ &  & 0.066 & 0.038 & 0.042 & 0.078 & 0.048 & 0.044 & 0.156 & 0.092 & 0.036 \\ 
 $S_r$ &  & 0.126 & 0.070 & 0.064 & 0.428 & 0.178 & 0.092 & 0.914 & 0.518 & 0.180 \\ 
 \text{LRT}&  & 0.126& -- & -- & 0.300 & -- & -- & 0.776 & -- & -- \\
 \text{Cai \& Ma}&  & 0.118 & 0.062 & 0.066 & 0.414 & 0.180 & 0.094 & 0.928 & 0.506 & 0.198 \\ 
\hline

$S_\tau$ & \multirow{6}{*}{256}  & 0.246 & 0.120 & 0.078 & 0.818 & 0.394 & 0.168 & 1.000 & 0.906 & 0.476 \\ 
  $T_\tau$ &  & 0.246 & 0.120 & 0.082 & 0.808 & 0.402 & 0.164 & 1.000 & 0.908 & 0.474 \\ 
  $S_\tau^{\max}$&  & 0.086 & 0.038 & 0.064 & 0.136 & 0.080 & 0.064 & 0.618 & 0.136 & 0.066 \\ 
   $S_r$&  & 0.268 & 0.120 & 0.090 & 0.864 & 0.430 & 0.172 & 1.000 & 0.952 & 0.510 \\ 
 \text{LRT}&  & 0.220 & 0.092 & -- &  0.780 & 0.314  & -- & 1.000&  0.84 & -- \\ 
\text{Cai \& Ma}&  & 0.258 & 0.120 & 0.094 & 0.864 & 0.420 & 0.196 & 1.000 & 0.954 & 0.522 \\ 
\hline
\hline
\multicolumn{11}{c}{MVT}\\
  \hline
\hline
$S_\tau$ & \multirow{3}{*}{64} & 0.484 & 0.870 & 0.998 & 0.634 & 0.900 & 0.998 & 0.832 & 0.946 & 0.998 \\ 
  $T_\tau$ &  & 0.116 & 0.080 & 0.072 & 0.214 & 0.128 & 0.082 & 0.440 & 0.218 & 0.112 \\ 
  $S_\tau^{\max}$&  & 0.078 & 0.070 & 0.060 & 0.092 & 0.068 & 0.066 & 0.132 & 0.086 & 0.076 \\ 
 \hline
 $S_\tau$ &  \multirow{3}{*}{128} & 0.560 & 0.912 & 0.998 & 0.830 & 0.950 & 0.998 & 0.988 & 0.992 & 1.000 \\ 
  $T_\tau$ &  & 0.124 & 0.102 & 0.086 & 0.370 & 0.180 & 0.130 & 0.884 & 0.482 & 0.242 \\ 
  $S_\tau^{\max}$ &  & 0.068 & 0.062 & 0.070 & 0.102 & 0.064 & 0.076 & 0.238 & 0.102 & 0.078 \\ 
\hline
 $S_\tau$   & \multirow{3}{*}{256} & 0.712 & 0.932 & 1.000 & 0.978 & 0.988 & 1.000 & 1.000 & 1.000 & 1.000 \\ 
  $T_\tau$ &  & 0.256 & 0.134 & 0.076 & 0.804 & 0.344 & 0.170 & 1.000 & 0.892 & 0.480 \\ 
 $S_\tau^{\max}$&  & 0.094 & 0.066 & 0.096 & 0.220 & 0.092 & 0.090 & 0.638 & 0.226 & 0.132 \\ 

\hline
\hline
\multicolumn{11}{c}{MVPE}\\
  \hline
\hline
   $S_\tau$ &\multirow{3}{*}{64}  & 0.054 & 0.038 & 0.026 & 0.120 & 0.062 & 0.030 & 0.324 & 0.110 & 0.048 \\ 
  $T_\tau$ &  & 0.120 & 0.074 & 0.072 & 0.204 & 0.108 & 0.094 & 0.462 & 0.212 & 0.128 \\ 
 $S_\tau^{\max}$ &  & 0.056 & 0.046 & 0.034 & 0.078 & 0.046 & 0.038 & 0.094 & 0.048 & 0.038 \\ 
 \hline
  $S_\tau$ & \multirow{3}{*}{128} & 0.060 & 0.036 & 0.028 & 0.250 & 0.082 & 0.038 & 0.822 & 0.272 & 0.092 \\ 
 $T_\tau$ &  & 0.128 & 0.082 & 0.058 & 0.386 & 0.150 & 0.086 & 0.906 & 0.446 & 0.190 \\ 
$S_\tau^{\max}$ &  & 0.034 & 0.060 & 0.050 & 0.062 & 0.058 & 0.046 & 0.168 & 0.076 & 0.050 \\ 
  \hline
 $S_\tau$&\multirow{3}{*}{256} & 0.122 & 0.058 & 0.026 & 0.716 & 0.226 & 0.082 & 1.000 & 0.828 & 0.268 \\ 
 $T_\tau$ &  & 0.226 & 0.126 & 0.072 & 0.842 & 0.374 & 0.144 & 1.000 & 0.910 & 0.452 \\ 
$S_\tau^{\max}$ &  & 0.058 & 0.030 & 0.056 & 0.146 & 0.044 & 0.066 & 0.578 & 0.106 & 0.076 \\ 

   \hline
\end{tabular}
\label{tab:empConj}
\end{table}

\begin{table}[ht]
\centering
\caption[Table caption text]{Simulated power when contaminating 5\% of
 data generated from $N_m \left(0,   \Sigma_{\text{band}2} \right)$, where $\Sigma_{\text{band}2}=(\sigma_{ij})$ has
diagonal entries $\sigma_{ii}=1$ and off-diagonal entry $\sigma_{ij}=0.1$ if
$1\le |i - j| \leq 2$ and $\sigma_{ij}=0$ if $|i-j| \ge 3$. For each combination of $(m, n)$ and each test, the power is calculated from $500$ independently generated datasets.}
\begin{tabular}{c|c|*{7}{c}}
  \hline
Statistic & $ n \backslash m$ & 4 & 8 & 16 & 32 & 64 & 128 \\ 
  \hline
  $S_r$& \multirow{7}{*}{16} & 0.058 & 0.058 & 0.038 & 0.072 & 0.086 & 0.092 \\ 
   $S_\tau$ &  & 0.074 & 0.090 & 0.094 & 0.096 & 0.116 & 0.120 \\ 
  $T_\tau$  &  & 0.094 & 0.108 & 0.122 & 0.108 & 0.144 & 0.146 \\ 
  $S_{\rho_s}$ &  & 0.034 & 0.068 & 0.056 & 0.070 & 0.076 & 0.074 \\ 
   $T_{\hat{\rho}_s}$ &  & 0.088 & 0.096 & 0.118 & 0.116 & 0.136 & 0.152 \\ 
  $S_{t^*}$ &  & 0.078 & 0.114 & 0.114 & 0.130 & 0.150 & 0.162 \\ 
   $Z_{t^*}$ &  & 0.100 & 0.112 & 0.118 & 0.096 & 0.112 & 0.138 \\ \hline
  $S_r$ & \multirow{7}{*}{32}& 0.072 & 0.100 & 0.078 & 0.110 & 0.106 & 0.104 \\ 
   $S_\tau$ &  & 0.086 & 0.112 & 0.114 & 0.130 & 0.136 & 0.126 \\ 
  $T_\tau$  &  & 0.090 & 0.130 & 0.128 & 0.132 & 0.150 & 0.138 \\ 
  $S_{\rho_s}$ &  & 0.072 & 0.098 & 0.086 & 0.110 & 0.106 & 0.096 \\ 
   $T_{\hat{\rho}_s}$ &  & 0.084 & 0.126 & 0.114 & 0.138 & 0.136 & 0.128 \\ 
  $S_{t^*}$ &  & 0.068 & 0.114 & 0.130 & 0.122 & 0.148 & 0.112 \\ 
   $Z_{t^*}$ &  & 0.088 & 0.120 & 0.130 & 0.118 & 0.146 & 0.116 \\ \hline
  $S_r$ & \multirow{7}{*}{64} & 0.110 & 0.156 & 0.128 & 0.158 & 0.172 & 0.182 \\ 
   $S_\tau$ &  & 0.134 & 0.164 & 0.176 & 0.216 & 0.222 & 0.204 \\ 
  $T_\tau$  &  & 0.138 & 0.176 & 0.182 & 0.220 & 0.240 & 0.202 \\ 
  $S_{\rho_s}$ &  & 0.114 & 0.166 & 0.152 & 0.190 & 0.190 & 0.192 \\ 
   $T_{\hat{\rho}_s}$ &  & 0.134 & 0.176 & 0.180 & 0.204 & 0.228 & 0.200 \\ 
  $S_{t^*}$ &  & 0.110 & 0.168 & 0.148 & 0.184 & 0.184 & 0.168 \\ 
   $Z_{t^*}$ &  & 0.130 & 0.170 & 0.174 & 0.192 & 0.184 & 0.190 \\ \hline
  $S_r$ &\multirow{7}{*}{128} & 0.224 & 0.290 & 0.332 & 0.342 & 0.384 & 0.414 \\ 
   $S_\tau$ &  & 0.306 & 0.390 & 0.408 & 0.436 & 0.454 & 0.484 \\ 
  $T_\tau$  &  & 0.308 & 0.392 & 0.418 & 0.440 & 0.462 & 0.484 \\ 
  $S_{\rho_s}$ &  & 0.296 & 0.376 & 0.392 & 0.418 & 0.444 & 0.470 \\ 
   $T_{\hat{\rho}_s}$ &  & 0.302 & 0.398 & 0.414 & 0.434 & 0.452 & 0.424 \\ 
  $S_{t^*}$ &  & 0.198 & 0.292 & 0.338 & 0.356 & 0.370 & 0.412 \\ 
   $Z_{t^*}$ &  & 0.274 & 0.336 & 0.402 & 0.388 & 0.412 & 0.414 \\ 
   \hline
\end{tabular}
\label{tab:dirtpowernew}
\end{table}

\end{document}